\newcommand{\R}{\mathbf R}
\newcommand{\T}{\mathbf T}
\newcommand{\Z}{\mathbf Z}
\renewcommand{\P}{\mathbf P}
\renewcommand{\S}{\mathbf S}
\newcommand{\FF}{\mathcal F}
\newcommand{\la}{\langle}
\newcommand{\ra}{\rangle}
\renewcommand{\d}{\mathrm{d}}
\newcommand{\eps}{\varepsilon}
\newcommand{\Nt}{|\hskip-0.04cm|\hskip-0.04cm|}
\DeclareMathOperator{\re}{Re}
\DeclareMathOperator{\Ker}{Ker}
\DeclareMathOperator{\Div}{div}
\numberwithin{equation}{section}
\setlist[enumerate]{wide,labelindent=0cm,label=\textnormal{(\arabic*)},itemsep=5pt,topsep=4pt}
\theoremstyle{plain}
\newtheorem{theo}{Theorem}[section]
\newtheorem{prop}[theo]{Proposition}
\newtheorem{lem}[theo]{Lemma}
\theoremstyle{remark}
\newtheorem{rem}{Remark}[section]
\theoremstyle{definition}
\title{Hydrodynamic limit for the non-cutoff Boltzmann equation}
\author[C. Cao]{Chuqi Cao}
\address[Chuqi Cao]{Department of Applied Mathematics, The Hong Kong Polytechnic University, Hong Kong, China} \email{chuqicao@gmail.com}
\author[K. Carrapatoso]{Kleber Carrapatoso}
\address[Kleber Carrapatoso]{Centre de Math\'ematiques Laurent Schwartz, \'Ecole polytechnique, Institut Polytechnique de Paris, 91128 Palaiseau cedex, France}
\email{kleber.carrapatoso@polytechnique.edu}
\subjclass[2020]{35Q20, 35Q30, 82C40, 76P05}
\keywords{Boltzmann equation, non-cutoff potentials, large-time behavior, incompressible Navier-Stokes equation, Hydrodynamic limit}
\thanks{The authors thank Tong Yang for fruitful discussions concerning the results of \cite{YY2}. C.C.\ research is partially supported by the Research Centre for Nonlinear Analysis, Hong Kong Polytechnic University. K.C.\ has been partially supported by the Project EFI ANR-17-CE40-0030 of the French National Research Agency.}
\begin{document}

\begin{abstract}
This work deals with the non-cutoff Boltzmann equation for all type of potentials, in both the torus $\T^3$ and in the whole space $\R^3$, under the incompressible Navier-Stokes scaling. We first establish the well-posedness and decay of global mild solutions to this rescaled Boltzmann equation in a perturbative framework, that is for solutions close to the Maxwellian, obtaining in particular integrated-in-time regularization estimates. 
We then combine these estimates with spectral-type estimates in order to obtain the strong convergence of solutions to the non-cutoff Boltzmann equation towards the incompressible Navier-Stokes-Fourier system.
\end{abstract}

\maketitle

\tableofcontents

\section{Introduction}

Since Hilbert~\cite{H2}, an important problem in kinetic theory concerns the rigorous link between different scales of description of a gas. More precisely, one is interested in passing rigorously from a mesoscopic description of a gas, modeled by the kinetic Boltzmann equation, towards a macroscopic description, modeled by Euler or Navier-Stokes fluid equations, through a suitable scaling limit. We are interested in this paper on the convergence of solutions to the Boltzmann equation towards the incompressible Navier-Stokes equation, and we refer to the book~\cite{SR} and the references therein to a detailed description of this type of problem as well as to different scalings and fluid limit equations.

We introduce in Section~\ref{sec:intro_boltzmann} below the (rescaled) Boltzmann equation, and then in Section~\ref{sec:intro_nsf} we describe the  incompressible Navier-Stokes-Fourier system, which is the expected limit. We finally present our main results in Section~\ref{sec:results}.

\subsection{The Boltzmann equation}\label{sec:intro_boltzmann}

The Boltzmann equation is a fundamental model in kinetic theory that describes the evolution of a rarefied gas out of equilibrium by taking into account binary collisions between particles. More precisely, it describes the evolution in time of the unknown $F(t,x,v) \ge 0$ which represents the density of particles that at time $t \ge 0$ and position $x \in \Omega_x = \T^3$ or $\Omega_x = \R^3$ move with velocity $v \in \R^3$. It was introduced by Maxwell~\cite{Maxwell} and Boltzmann~\cite{Boltzmann} and reads
\begin{equation}\label{eq:Boltzmann0}
\partial F +  v \cdot \nabla_x F = \frac{1}{\eps} Q(F,F),
\end{equation}
which is complemented with an initial data $F_{|t=0} = F_0$ and where $\eps \in (0,1]$ is the Knudsen number, which corresponds to the ratio between the mean-free path and the macroscopic length scale.

The Boltzmann collision operator $Q$ is a bilinear operator acting only on the velocity variable $v \in \R^3$, which means that collisions are local in space, and it is given by
\begin{equation}\label{eq:operator_Boltzmann}
Q(G,F)(v)=\int_{\R^3}\int_{\S^2}B(v-v_*,\sigma)(G'_*F'-G_*F)\, \d \sigma \, \d v_*, 
\end{equation}
where here and below we use the standard short-hand notation $F=F(v)$, $G_*=G(v_*)$, $F'=F(v')$, and $G'_*=G(v'_*)$, and where the pre- and post-collision velocities $(v',v'_*)$ and $(v,v_*)$ are related through
\begin{equation}\label{eq:def-v'-v'*}
v' = \frac{v+v_*}{2} + \frac{|v-v_*|}{2}\sigma 
\quad\text{and}\quad
v'_* = \frac{v+v_*}{2} - \frac{|v-v_*|}{2}\sigma ,
\end{equation}
where $\sigma \in \S^2$.
The above formula is one possible parametrization of the set of solutions of  an elastic collision with the physical laws of conservation (momentum and energy) 
$$
v+v_*=v'+v'_* \quad\text{and}\quad  |v|^2+|v_*|^2=|v'|^2+|v'_*|^2.
$$
The function $B(v-v_*,\sigma)$ appearing in \eqref{eq:operator_Boltzmann}, called the collision kernel, is supposed to be nonnegative and to depend only on the relative velocity $|v-v_*|$ and the deviation angle $\theta$ through $\cos\theta:= \frac{v-v_*}{|v-v_*|}\cdot\sigma$. As it is customary, we may suppose without loss of generality that $\theta \in [0,\pi/2]$,  for otherwise $B$ can be replaced by its symmetrized form.

In this paper we shall consider the case of \emph{non-cutoff potentials}  that we describe now. 
The collision kernel $B$ takes the form
$$
B(v-v_*,\sigma)=|v-v_*|^\gamma b(\cos \theta),
$$
for some nonnegative function $b$, called the angular kernel, and some parameter $\gamma \in (-3,1]$. We assume that the angular kernel $b$ is a locally smooth implicit function which is not locally integrable, more precisely that it satisfies
$$
\mathcal{K}\theta^{-1-2s}\leq \sin\theta \, b(\cos\theta)\leq \mathcal{K}^{-1}\theta^{-1-2s} \quad \mbox{with} \quad 0<s<1,
$$
for some constant $\mathcal{K}>0$. Moreover the parameters satisfy the condition 
\begin{equation}\label{eq:cond_gamma_s}
\max\left\{ -3 , - \frac{3}{2} - 2s \right\} <\gamma  \le 1, \quad 0<s<1, \quad  \gamma+2s>-1.
\end{equation}
We shall consider in this paper the full range of parameters $\gamma$ and $s$ satisfying \eqref{eq:cond_gamma_s}, and we classify them into two cases: When $\gamma + 2s \ge 0$ we speak of \emph{hard potentials}, and when $\gamma+2s < 0$ of \emph{soft potentials}.
We also mention that \emph{cutoff kernels} correspond to the case in which we remove the singularity of the angular kernel $b$ and assume that $b$ is integrable.

\begin{rem}
When particles interact via a repulsive inverse-power law potential $\phi(r) = r^{-(p-1)}$ with $p>2$, then it holds (see \cite{Maxwell,Cercignani}) that $\gamma = \frac {p-5} {p-1}$ and $s = \frac 1 {p-1}$. It is easy to check that $\gamma + 4s = 1$ which means the above assumption is satisfied for the full range of the inverse power law model. 
\end{rem}

Formally if $F$ is a solution to equation \eqref{eq:Boltzmann0} with the initial data $F_0$, then it enjoys the conservation of mass, momentum and the energy, that is,
\[ 
\frac {d}{dt}\int_{\Omega_x  \times{\R}^3 } F(t, x , v)\varphi(v) \, \d v \, \d x= 0, \quad \varphi(v)=1,v,|v|^2,
\]
which is a consequence of the collision invariants of the Boltzmann operator
\begin{equation}\label{eq:collision_invariants}
\int_{\R^3} Q(F,F)(v) \varphi(v) \, \d v = 0, \quad \varphi(v)=1,v,|v|^2.
\end{equation}
Moreover the Boltzmann H-theorem asserts on the one hand that the entropy
\[
H(F) = \int_{\Omega_x \times \R^3}  F \log F \, \d v \, \d x ,
\]
is non-increasing in time. Indeed, at least formally, since $(x-y) (\log x -\log y)$ is nonnegative, we have the following inequality for the entropy dissipation $D(f)$:
\begin{align*}
D(f) 
&= -\frac{\d}{\d t} H(F) = - \int_{\Omega_x \times \R^3} Q(F,F) \, \d v \, \d x \\
&= \frac 1 4\int_{\Omega_x \times \R^3 \times \R^3 \times \S^2} B(v-v_*, \sigma) (F' F'_* - F_* F)  \log \left( \frac{F' F_*'}{F F_*} \right) \, \d \sigma \, \d v_* \, \d v \,  \d x \ge 0.
\end{align*}
On the other hand, the second part of the H-theorem asserts that local equilibria of the Boltzmann equation are local Maxwellian distributions in velocity, more precisely that
$$
D(F) = 0 \quad \Leftrightarrow \quad Q(F,F)=0 \quad \Leftrightarrow \quad F(t,x,v) = \frac{\rho(t,x)}{(2 \pi \theta(t,x))^{3/2}} \exp\left( - \frac{|v-u(t,x)|^2}{2 \theta(t,x)} \right),
$$
with $\rho (t,x) >0$, $u(t,x) \in \R^3$ and $\theta(t,x) >0$. In what follows, we denote by $\mu=\mu(v)$ the global Maxwellian
$$
\mu = (2\pi)^{-3/2} e^{-|v|^2/2}.
$$

Observing that the effect of collisions are enhanced when taking small parameter $\eps \in (0,1]$, one can expect from the above H-Theorem that, at least formally, in the limit $\eps \to 0$ the solution $F$ approaches a local Maxwellian equilibrium. One therefore considers, see for instance in~\cite{BGL1}, a rescaling of the solution $F$ of \eqref{eq:Boltzmann0} in which an additional dilatation of the macroscopic time scale has been performed in order to be able to reach the Navier-Stokes equation in the limit. This procedure gives us the following rescaled Boltzmann equation for the new unknown $F^\eps = F^\eps(t,x,v)$:
\begin{equation}\label{eq:Boltzmann}
\partial F^\eps +  \frac{1}{\eps} v \cdot \nabla_x F^\eps = \frac{1}{\eps^2} Q(F^\eps,F^\eps),
\end{equation}
with initial data $F^\eps_{|t=0} = F^\eps_0$. 

In the torus case $\Omega_x = \T^3$ (normalized as $|\T^3|=1$), we shall always assume, thanks to the conservation laws, that the initial datum $F^\eps_0$ satisfies the normalization
\begin{equation}\label{eq:normalization0}
\int_{\T^3}\int_{\R^3} F^\eps_0(x,v) [1,v,|v|^2] \, \d v \, \d x = [1,0,3],
\end{equation}
that is, the initial data $F^\eps_0$ has the same mass, momentum and energy as $\mu$, and the Maxwellian $\mu$ is the unique global equilibrium to \eqref{eq:Boltzmann}.

In order to relate the above rescaled Boltzmann equation \eqref{eq:Boltzmann} to the expected incompressible Navier-Stokes-Fourier system (described below in \eqref{Navier-Stokes-Fourier system0}) in the limit $\eps \to 0$, we are going to work with the perturbation $f^\eps$ defined by
\begin{equation}\label{eq:def:feps}
F^\eps = \mu + \eps \sqrt{\mu} f^\eps ,
\end{equation}
which then satisfies the equation
\begin{equation}\label{eq:feps_intro}
\partial_t f^\eps + \frac{1}{\eps} v \cdot \nabla_x f^\eps = \frac{1}{\eps^2}L f^\eps + \frac{1}{\eps} \Gamma(f^\eps , f^\eps), \quad
\end{equation}
with initial data $f^\eps_0 = \frac{F^\eps_0-\mu}{\eps \sqrt{\mu}}$, and where we denote 
\begin{equation}\label{eq:def:Gamma}
\Gamma (f,g) = \mu^{-1/2} Q (\sqrt{\mu} f , \sqrt{\mu} g),
\end{equation}
and
\begin{equation}\label{eq:def:L}
L f = \Gamma(\sqrt{\mu} , f) + \Gamma(f,\sqrt{\mu}).
\end{equation}
We already remark that thanks to the collision invariants in \eqref{eq:collision_invariants}, we have
\begin{equation}\label{eq:Gamma_invariants}
\int_{\R^3} \Gamma(f,f) [1,v,|v|^2] \sqrt{\mu} \, \d v = 0.
\end{equation}

In the case of the torus $\Omega_x = \T^3$, we observe from \eqref{eq:normalization0} that $f^\eps_0$ satisfies
\begin{equation}\label{eq:normalization1}
\int_{\T^3}\int_{\R^3} f^\eps_0(x,v) [1,v,|v|^2]\sqrt{\mu}(v) \, \d v \, \d x = 0,
\end{equation}
and from the conservation laws recalled above that, for all $t \ge 0$,
\begin{equation}\label{eq:normalization2}
\int_{\T^3}\int_{\R^3} f^\eps(t,x,v) [1,v,|v|^2]\sqrt{\mu}(v) \, \d v \, \d x = 0.
\end{equation}

\subsection{The Navier-Stokes-Fourier system}\label{sec:intro_nsf}

We recall the Navier-Stokes-Fourier system associated with the Boussinesq equation which writes
\begin{equation}
\label{Navier-Stokes-Fourier system0}
\left\{ \begin{aligned}& \partial_t u + u \cdot \nabla_x u + \nabla_x p- \nu_1 \Delta_x u = 0,
\\
& \partial_t \theta + u \cdot \nabla_x \theta - \nu_2 \Delta_x \theta = 0,
\\
& \Div_x u = 0,
\\
& \nabla_x(\rho +\theta) =0,
\end{aligned}
\right.
\end{equation}
with positive viscosity coefficients $\nu_1, \nu_2>0$.
In this system, the temperature $\theta = \theta(t,x) : \R_+ \times \Omega_x \to \R$ of the fluid, the density $\rho = \rho(t,x) : \R_+ \times \Omega_x \to \R$ of the fluid, and the pressure $p = p(t,x): \R_+ \times \Omega_x \to \R$ of the fluid are scalar unknowns, whereas the velocity $u=u(t,x) : \R_+ \times \Omega_x \to \R^3$ of the fluid is an unknown vector field. 
The pressure $p$ can actually be eliminated from the equation by applying to the first equation in~\eqref{Navier-Stokes-Fourier system0} the Leray projector $\mathbb{P}$ onto the space of divergence-free vector fields. In other words, for $u$ we have
$$
\partial_t u  -\nu_1 \Delta_x u  =Q_{\mathrm{NS}}  (u, u),
$$
where the bilinear operator $Q_{\mathrm{NS}} $ is defined by
\begin{equation}\label{eq:def:QNS}
Q_{\mathrm{NS}}   (v, u)    = -  \frac 1 2\mathbb{P} (\Div (v \otimes u ) + \Div (u \otimes v ) ), \quad \Div (v \otimes u )^j :=\sum_{k=1}^3 \partial_k(v^ju^k) = \Div (v^j u), 
\end{equation}
and the Leray projector $\mathbb P$ on divergence-free vector fields is as follows, for $1 \le j \le 3$ and all $\xi \in \Omega'_\xi$, 
\[
\mathcal{F}_x (\mathbb{P} f)^j (\xi)  =   \mathcal{F}_x (f^j)(\xi) -\frac 1 {|\xi|^2}    \sum_{k=1}^3 \xi_j \xi_k \mathcal{F}_x (f^k) (\xi) =\sum_{k=1}^3 (\delta_{j, k} -1) \frac {\xi_j \xi_k} {|\xi|^2} \mathcal{F}_x (f^k) (\xi) ,
\]
where $\FF_x$ denotes the Fourier transform in the spatial variable $x \in \Omega_x$, see for instance \cite[Section 5.1]{BCD}.

We therefore consider the system
\begin{equation}
\label{Navier-Stokes-Fourier system}
\left\{ \begin{aligned}& 
\partial_t u  -\nu_1 \Delta_x u  =Q_{\mathrm{NS}}  (u, u)   ,
\\
& \partial_t \theta + u \cdot \nabla_x \theta - \nu_2 \Delta_x \theta = 0
\\
& \Div_x u = 0,
\\
& \nabla_x(\rho +\theta) =0,
\end{aligned}
\right.
\end{equation}
for the unknown $(\rho,u,\theta)$, which is complemented with a initial data $(\rho_0 , u_0 , \theta_0)$ that we shall always suppose to verify
\begin{equation}\label{eq:rho0_u0_theta0}
\Div_x u_0 = 0 , \quad \nabla_x(\rho_0 +\theta_0) =0.
\end{equation}
In the case of the torus $\Omega_x = \T^3$, we suppose moreover that the initial data is mean-free, namely
$$
\int_{\T^3} \rho_0(x) \, \d x = 
\int_{\T^3} u_0(x) \, \d x = 
\int_{\T^3} \theta_0(x) \, \d x = 0,
$$
which then implies that the associated solution $(\rho,u,\theta)$ also is mean-free for all $t \ge 0$
\begin{equation}\label{eq:meanfree}
\int_{\T^3} \rho(t,x) \, \d x = 
\int_{\T^3} u(t,x) \, \d x = 
\int_{\T^3} \theta(t,x) \, \d x = 0.
\end{equation}

\section{Main results}\label{sec:results}

Our main result establishes a strong convergence in the hydrodynamic limit from solutions to the rescaled Boltzmann equation~\eqref{eq:feps_intro} towards solution to the incompressible Nabier-Stokes-Fourier equation~\eqref{Navier-Stokes-Fourier system} (see Theorem~\ref{theo:hydro_limit}). In order to do so, we first need to provide a well-posedness theory for the 
Boltzmann equation~\eqref{eq:feps_intro} (see Theorem~\ref{theo:boltzmann}) as well as a well-posedness theory for the incompressible Nabier-Stokes-Fourier equation~\eqref{Navier-Stokes-Fourier system} (see Theorem~\ref{theo:NSF}), in such a way that the functional frameworks are compatible for being able to compare solutions and then to tackle the hydrodynamic limit problem.

\medskip

Before stating our results we introduce some notation. 
Given a function $f=f(x, v)$ we denote $\widehat f (\xi, v) = \FF_x ( f(\cdot, v)) (\xi)$ the Fourier transform in the space variable, for $\xi \in \Omega'_\xi= \Z^3$ (if $\Omega_x = \T^3$) or $ \Omega'_\xi = \R^3$ (if $\Omega_x = \R^3$), more precisely 
$$
\widehat f(\xi, v) = \frac{1}{(2 \pi)^{3/2}}\int_{\R^3} e^{-\mathrm{i} x \cdot \xi} f(x, v) \, \d x.
$$
In particular, we observe that if $f$ satisfies \eqref{eq:feps_intro}, then for all $\xi \in \Omega'_\xi$, its Fourier transform in space $\widehat f^\eps (\xi)$ satisfies the equation
\begin{equation}\label{eq:feps_fourier_intro}
\partial_t \widehat f^\eps (\xi)  = \frac{1}{\eps^2} ( L - \mathrm{i} \eps v \cdot \xi ) \widehat f^\eps(\xi) + \frac{1}{\eps} \widehat{\Gamma}(f^\eps , f^\eps)(\xi),
\end{equation}
where
$$
\widehat{\Gamma}( f , g)(\xi) = \sum_{\eta \in \Z^3} \Gamma \left( \widehat f(\xi-\eta) , \widehat g (\eta) \right) \quad\text{if}\quad \Omega_x=\T^3,
$$
or
$$
\widehat{\Gamma}( f , g)(\xi) = \int_{\R^3} \Gamma \left( \widehat f(\xi-\eta) , \widehat g (\eta) \right) \d \eta \quad\text{if}\quad \Omega_x=\R^3.
$$

For functions $f=f(x, v)$ we write the \emph{micro-macro decomposition}
\begin{equation}\label{eq:f_micro_macro}
f = \P^\perp f + \P f, \quad \P^\perp = I - \P,
\end{equation}
where $\P$ is the orthogonal projection onto $\Ker (L) = \{\sqrt{\mu} , v \sqrt{\mu}, |v|^2 \sqrt{\mu}\}$ given by
\begin{equation}\label{eq:def:Pf}
\P f (x, v) = \left\{ \rho[f](x)  + u[f](x) \cdot v  + \theta[f](x) \frac{(|v|^2-3)}{2} \right\} \sqrt{\mu}(v),
\end{equation}
where
\begin{equation}\label{eq:def:rho_u_theta}
\begin{aligned}
\rho[f](x) &= \int_{\R^3} f(x, v) \sqrt{\mu}(v) \, \d v ,\\
u[f](x) &= \int_{\R^3}  f(x, v) v \sqrt{\mu}(v) \, \d v ,\\
\theta[f](x) &= \int_{\R^3} f(x, v) \frac{(|v|^2-3)}{3} \sqrt{\mu}(v) \, \d v .
\end{aligned}
\end{equation}
The function $\P^\perp f$ is called the \emph{microscopic part} of $f$, whereas $\P f$ is the \emph{macroscopic part} of $f$.

We now introduce the functional spaces we work with.
For every $\ell \ge 0$ we denote by $L^2_v(\la v \ra^\ell)$ the weighted Lebesgue space associated to the inner product
$$
\la f ,g \ra_{L^2_v(\la v \ra^\ell)} := \la \la v \ra^{\ell} f , \la v \ra^{\ell} g \ra_{L^2_v} = \int_{\R^3} f g \la v \ra^{2 \ell} \, \d v,
$$
and the norm
$$
\| f \|_{L^2_v(\la v \ra^\ell)} := \| \la v \ra^\ell f \|_{L^2_v},
$$
where $L^2_v = L^2 (\R^3_v)$ is the standard Lebesgue space.
We denote by $H^{s,*}_v$ the Sobolev-type space associated to the dissipation of the linearized operator $L$ defined in \cite{AMUXY2} (see also \cite{GS} for the definition of a different but equivalent anisotropic norm), more precisely we denote
\begin{equation}\label{eq:def:Hs*v0}
\| f \|_{H^{s,*}_v (\la v \ra^\ell)} := \| \la v \ra^\ell f \|_{H^{s,*}_v } ,
\end{equation}
where
\begin{equation}\label{eq:def:Hs*v}
\begin{aligned}
\| f \|_{H^{s,*}_v }^2 
&:=  \int_{\R^3}\int_{\R^3}\int_{\S^2} b(\cos \theta) |v-v_*|^\gamma \mu(v_*)[f(v') - f(v)]^2 \, \d \sigma \, \d v_* \, \d v \\
&\quad 
+  \int_{\R^3}\int_{\R^3}\int_{\S^2} b(\cos \theta) |v-v_*|^\gamma  f(v_*)^2 [\sqrt{\mu}(v') - \sqrt{\mu}(v) ]^2 \, \d \sigma \, \d v_* \, \d v,
\end{aligned}
\end{equation}
which verifies, see \cite{AMUXY2,GS}, 
$$
\| \la v \ra^{\gamma/2+s} f \|_{L^2_v(\la v \ra^\ell)}
+ \| \la v \ra^{\gamma/2} f \|_{H^{s}_v(\la v \ra^\ell)} 
\lesssim \| f \|_{H^{s,*}_v(\la v \ra^\ell)} 
\lesssim \| \la v \ra^{\gamma/2+s} f \|_{H^{s}_v(\la v \ra^\ell)}.
$$
We also define the space $(H^{s,*}_v)'$ as the dual space of $H^{s,*}_v$ endowed with the norm
\begin{equation}\label{eq:def:Hs*v'}
\| f \|_{(H^{s,*}_v)'} := \sup_{\| \phi \|_{H^{s,*}_v}\le 1 } \la f , \phi \ra.
\end{equation}
For functions depending on space and velocity variables, we shall also use a variant of the quantity $ \| \cdot \|_{H^{s,*}_v (\la v \ra^\ell)}$ defined above in \eqref{eq:def:Hs*v} that also depends on the spatial variable. More precisely, for $f=f(x,v)$ we define the quantity
\begin{equation}\label{eq:def:Hs**v}
\| f \|_{H^{s,**}_v (\la v \ra^\ell)}^2 
:= \| \P^\perp f \|_{H^{s,*}_v (\la v \ra^\ell)}^2 + \| a(D_x) \P f \|_{L^2_v}^2,
\end{equation}
where $a(D_x)$ is the Fourier multiplier $a(\xi) = \frac{|\xi|}{\la \xi \ra}$, which gives, in Fourier variable,
\begin{equation}\label{eq:def:Hs**v_fourier}
\| \widehat f(\xi) \|_{H^{s,**}_v (\la v \ra^\ell)}^2 
= \| \P^\perp \widehat f (\xi) \|_{H^{s,*}_v (\la v \ra^\ell)}^2 + \frac{|\xi|^2}{\la \xi \ra^2}\| \P \widehat f (\xi) \|_{L^2_v}^2.
\end{equation}
Finally, given a functional space $X$ in the variables $(t,\xi, v)$, we shall denote by $\FF^{-1}_x ( X)$ the Fourier-based space defined as
$$
\FF^{-1}_x ( X) := \left\{ f = f(t, x, v)  \mid \widehat f \in X \right\} .
$$
Hereafter, in order to deal with the torus case $\Omega_x = \T^3$ and the whole space case $\Omega_x = \R^3$ simultaneously, we denote $L^p_\xi = \ell^p(\Z^3)$ in the torus case and $L^p_\xi = L^p(\R^3)$ in the whole space case, moreover we abuse notation and write
$$
\int_{\Omega'_\xi} \phi(\xi) \, \d \xi := 
\left\{
\begin{aligned}
& \sum_{\xi \in \Z^3} \phi(\xi) \quad &\text{if}\quad \Omega'_\xi = \Z^3 , \\
& \int_{\R^3} \phi(\xi) \, \d \xi \quad &\text{if}\quad \Omega'_\xi = \R^3 .
\end{aligned}
\right.
$$
In particular, we shall consider below functional spaces of the type $\FF^{-1}_x(L^p_\xi L^\infty_t L^2_v (\la v \ra^\ell))$ and $\FF^{-1}_x( L^p_\xi L^2_t H^{s,*}_v (\la v \ra^\ell) )$ (or $\FF^{-1}_x( L^p_\xi L^2_t H^{s,**}_v (\la v \ra^\ell) )$) and the respective norms, for $f=f(t,x,v)$,
$$
\| \widehat f \|_{L^p_\xi L^\infty_t L^2_v(\la v \ra^\ell)} := \left(\int_{\Omega'_\xi} \sup_{t \ge 0} \| \widehat f(t,\xi, \cdot) \|_{L^2_v(\la v \ra^\ell)}^p  \, \d \xi\right)^{1/p} \quad \text{for} \quad p\in[1,+\infty),
$$
and 
$$
\| \widehat f \|_{L^p_\xi L^2_t H^{s,*}_v(\la v \ra^\ell)} := \left(\int_{\Omega'_\xi} \left\{ \int_{0}^\infty \| \widehat f(t,\xi, \cdot) \|_{H^{s,*}_v(\la v \ra^\ell)}^2 \, \d t \right\}^{p/2}  \, \d \xi \right)^{1/p} \quad \text{for} \quad p\in[1,+\infty),
$$
with the usual modification for $p=+\infty$.

\subsection{Well-posedness for the rescaled Boltzmann equation}
Our first result concerns the global well-posedness, regularization and decay for equation \eqref{eq:feps_intro} for small initial data.

\begin{theo}[Global well-posedness and decay for the Boltzmann equation]\label{theo:boltzmann}
Let $\ell=0$ in the hard potentials case $\gamma+2s \ge 0$, and $\ell \ge 0$ in the soft potentials case $\gamma+2s<0$.
There is $\eta_0>0$ small enough such that for all $\eps \in (0,1]$ the following holds:

\begin{enumerate}

\item Torus case $\Omega_x = \T^3$: For any initial data $f^\eps_0 \in \FF^{-1}_x(L^1_\xi L^2_v (\la v \ra^\ell))$ satisfying \eqref{eq:normalization2} and $ \| \widehat f^\eps_0 \|_{L^1_\xi L^2_v(\la v \ra^\ell)} \le \eta_0$, there exists a unique global mild solution $f^\eps \in \FF^{-1}_x(L^1_\xi L^\infty_t L^2_v(\la v \ra^\ell) \cap L^1_\xi L^2_t H^{s,*}_v (\la v \ra^\ell))$ to \eqref{eq:feps_intro} satisfying \eqref{eq:normalization2} and the energy estimate
\begin{equation}\label{eq:theo1:existence}
\| \widehat f^\eps \|_{L^1_\xi L^\infty_t L^2_v(\la v \ra^\ell)} + \frac{1}{\eps} \|  \P^\perp \widehat f^\eps \|_{L^1_\xi L^2_t H^{s,*}_v(\la v \ra^\ell)} +  \|  \P \widehat f^\eps \|_{L^1_\xi L^2_t L^2_v} \lesssim \| \widehat f^\eps_0 \|_{L^1_\xi L^2_v(\la v \ra^\ell)}.
\end{equation}

Moreover we have the following decay estimates: In the hard potentials case $\gamma+2s \ge 0$, there exists $\lambda >0$ such that \begin{equation}\label{eq:theo1:decay_hard}
\| \mathrm{e}_\lambda \widehat f^\eps \|_{L^1_\xi L^\infty_t L^2_v} + \frac{1}{\eps} \| \mathrm{e}_\lambda \P^\perp \widehat f^\eps \|_{L^1_\xi L^2_t H^{s,*}_v} +  \| \mathrm{e}_\lambda \P \widehat f^\eps \|_{L^1_\xi L^2_t L^2_v} \lesssim \| \widehat f^\eps_0 \|_{L^1_\xi L^2_v(\la v \ra^\ell)},
\end{equation}
where we denote $ \mathrm{e}_\lambda : t \mapsto e^{\lambda t}$. In the soft potentials case $\gamma + 2 s <0$, if $\ell>0$ then for any $0 < \omega < \frac{\ell}{|\gamma+2s|}$ there holds
\begin{equation}\label{eq:theo1:decay_soft}
\| \mathrm{p}_\omega \widehat f^\eps \|_{L^1_\xi L^\infty_t L^2_v} + \frac{1}{\eps} \| \mathrm{p}_\omega \P^\perp \widehat f^\eps \|_{L^1_\xi L^2_t H^{s,*}_v} +  \| \mathrm{p}_\omega \P \widehat f^\eps \|_{L^1_\xi L^2_t L^2_v} \lesssim \| \widehat f^\eps_0 \|_{L^1_\xi L^2_v (\la v \ra^\ell) },
\end{equation}
where we denote $\mathrm{p}_\omega : t \mapsto (1+t)^{\omega}$.

\item Whole space case $\Omega_x = \R^3$: Let $p \in (3/2,\infty]$. For any initial data $f^\eps_0 \in \FF^{-1}_x(L^1_\xi L^2_v(\la v \ra^\ell)  \cap L^p_\xi L^2_v(\la v \ra^\ell)  )$ satisfying $ \| \widehat f^\eps_0 \|_{L^1_\xi L^2_v(\la v \ra^\ell)} + \| \widehat f^\eps_0 \|_{L^p_\xi L^2_v(\la v \ra^\ell)} \le \eta_0$, there exists a unique global mild solution 
$
f^\eps \in \FF^{-1}_x(L^1_\xi L^\infty_t L^2_v(\la v \ra^\ell) \cap L^1_\xi L^2_t H^{s,**}_v(\la v \ra^\ell) ) \cap  \FF^{-1}_x(L^p_\xi L^\infty_t L^2_v(\la v \ra^\ell) \cap L^p_\xi L^2_t H^{s,**}_v(\la v \ra^\ell) )
$ 
to \eqref{eq:feps_intro} satisfying the energy estimate
\begin{equation}\label{eq:theo1bis:existence}
\begin{aligned}
\| \widehat f^\eps \|_{L^1_\xi L^\infty_t L^2_v(\la v \ra^\ell)} 
+ \frac{1}{\eps} \|  \P^\perp \widehat f^\eps \|_{L^1_\xi L^2_t H^{s,*}_v(\la v \ra^\ell)} 
+&  \left\| \frac{|\xi|}{\la \xi \ra} \P \widehat f^\eps \right\|_{L^1_\xi L^2_t L^2_v} \\ 
+\| \widehat f^\eps \|_{L^p_\xi L^\infty_t L^2_v(\la v \ra^\ell)} 
+ \frac{1}{\eps} \|  \P^\perp \widehat f^\eps \|_{L^p_\xi L^2_t H^{s,*}_v(\la v \ra^\ell)} 
&+  \left\| \frac{|\xi|}{\la \xi \ra} \P \widehat f ^\eps\right\|_{L^p_\xi L^2_t L^2_v} 
\lesssim \| \widehat f^\eps_0 \|_{L^1_\xi L^2_v(\la v \ra^\ell)} + \| \widehat f^\eps_0 \|_{L^p_\xi L^2_v(\la v \ra^\ell)}.
\end{aligned}
\end{equation}

Moreover we have the following decay estimates: In the hard potentials case $\gamma+2s \ge 0$, for any $0< \vartheta < \frac{3}{2}(1-\frac{1}{p})$ there holds
\begin{equation}\label{eq:theo1bis:decay_hard}
\begin{aligned}
\| \mathrm{p}_\vartheta \widehat f^\eps \|_{L^1_\xi L^\infty_t L^2_v} 
+ \frac{1}{\eps} \| \mathrm{p}_\vartheta \P^\perp \widehat f^\eps \|_{L^1_\xi L^2_t H^{s,*}_v} 
&+  \left\| \mathrm{p}_\vartheta \frac{|\xi|}{\la \xi \ra} \P \widehat f^\eps \right\|_{L^1_\xi L^2_t L^2_v} \\
&\lesssim \| \widehat f^\eps_0 \|_{L^1_\xi L^2_v} + \| \widehat f^\eps_0 \|_{L^p_\xi L^2_v}.
\end{aligned}
\end{equation}
where we denote $\mathrm{p}_\vartheta : t \mapsto (1+t)^{\vartheta}$. In the soft potentials case $\gamma + 2 s <0$, if $0 < \vartheta <  \frac{3}{2}(1-\frac{1}{p})$ and $\ell > \vartheta |\gamma+2s|$ there holds
\begin{equation}\label{eq:theo1bis:decay_soft}
\begin{aligned}
\| \mathrm{p}_\vartheta \widehat f^\eps \|_{L^1_\xi L^\infty_t L^2_v} 
+ \frac{1}{\eps} \| \mathrm{p}_\vartheta \P^\perp \widehat f^\eps \|_{L^1_\xi L^2_t H^{s,*}_v} 
&+  \left\| \mathrm{p}_\vartheta \frac{|\xi|}{\la \xi \ra} \P \widehat f^\eps \right\|_{L^1_\xi L^2_t L^2_v} \\
&\lesssim \| \widehat f^\eps_0 \|_{L^1_\xi L^2_v(\la v \ra^\ell)} + \| \widehat f^\eps_0 \|_{L^p_\xi L^2_v(\la v \ra^\ell)}.
\end{aligned}
\end{equation}

\end{enumerate}

\end{theo}

\begin{rem}\begin{enumerate}[label=(\roman*)]

\item We observe that in the soft-potentials case $\gamma+2s<0$ we can take $\ell=0$ for the well-posedness result. We only need a well-posedess theory with $\ell>0$ in order to obtain the decay estimates (\eqref{eq:theo1:decay_soft} and \eqref{eq:theo1bis:decay_soft}).

\item We observe that the functional spaces are different when working on the torus or the whole space. In the torus we have a solution in the space $\FF^{-1}_x (L^1_\xi L^2_t H^{s,*}_v (\la v \ra^\ell))$, whereas in the whole space the solution belongs to $\FF^{-1}_x (L^1_\xi L^2_t H^{s,**}_v (\la v \ra^\ell))$, with clearly $\| \cdot \|_{H^{s,**}_v (\la v \ra^\ell)} \le \| \cdot \|_{H^{s,*}_v (\la v \ra^\ell)}$. This comes from the hypocoercive-type estimate for the linearized operator (see Proposition~\ref{prop:hypocoercivity}). 

\item Another difference between the torus and the whole space appears when dealing with low frequencies $|\xi|<1$. When working on the torus the only low frequency is $\xi=0$, which is controlled thanks to the conservation laws. On the other hand, in the whole space, the gain estimate for the linearized operator in $\FF^{-1}_x (L^1_\xi L^2_t H^{s,**}_v (\la v \ra^\ell))$ is not enough to control low frequencies in the nonlinear estimates. This is why we also need to work in $\FF^{-1}_x (L^p_\xi)$-type spaces with $p\in(3/2,\infty]$.

\end{enumerate}
\end{rem}

\medskip

The Cauchy theory and the large time behavior for Boltzmann equation for $\eps=1$ have been extensively studied. Concerning the theory for large data, we only mention the global existence of renormalized solutions \cite{DPL} for the cutoff Boltzmann equation, and  the global existence of renormalized solutions with defect measure \cite{AV} for the non-cutoff Boltzmann equation.

We now give a very brief review for solutions to the Boltzmann equation in a perturbative framework, that is, for solutions near the Maxwellian. For the case of cutoff potentials, we refer to the works \cite{G1965,U1974,U1976,C1980_2,AU1982} as well as the more recent \cite{SW2021,D2022} for global solutions in spaces of the form $L^\infty_v H^N_x$ ; and to \cite{K1990,LYY2004,G2,SG,D2008} for solutions in $H^N_{x,v}$ or $H^N_x L^2_v$. On the other hand, for the non-cutoff Boltzmann equation, we refer to \cite{GS, GS2} in the torus case and to \cite{AMUXY2, AMUXY3, AMUXY4} in the whole space case, for the first global solutions in spaces of the form $H^N_{x, v}$ by working with anisotropic norms (see \eqref{eq:def:Hs*v}). The optimal time-decay was obtained in \cite{S} for the whole space, and recently \cite{D2022_2} constructed global solutions in the whole space.

All the above results concern solutions with Gaussian decay in velocity, that is, they hold in functional spaces of the type $H^N_{x, v}$ for the perturbation $f$ defined in \eqref{eq:def:feps}, which means that $F-\mu \in H^N_{x, v}(\mu^{-1/2})$. By developing decay estimates on the resolvents and semigroups of non-symmetric operators in Banach spaces, Gualdani-Mischler-Mouhot~\cite{GMM} proved nonlinear stability for the cutoff Boltzmann equation with hard potentials in $L^1_v L^\infty_x ( \la v \ra^k \mu^{1/2} ), k > 2$, that is, in spaces with polynomial decay in velocity ($f \in L^1_v L^\infty_x ( \la v \ra^k \mu^{1/2} )$ means $F-\mu \in L^1_v L^\infty_x ( \la v \ra^k)$). In the same framework, the case of non-cutoff hard potentials was treated in \cite{HTT, AMSY}, and
that of non-cutoff soft potentials in \cite{CHJ}.

The aforementioned results were obtained in Sobolev-type spaces, very recently Duan, Liu, Sakamoto and Strain~\cite{DLSS} obtained the well-posedness of the Boltzmann equation in Fourier-based spaces $L^1_{\xi}L^\infty_t L^2_x$ in the torus case, which was then extended to the whole space case by Duan, Sakamoto and Ueda in~\cite{DSU2022}, see also \cite{CG} for the whole space case in polynomial weighted spaces. We also refer to the works \cite{AMSY2,Cao} for recent results on the well-posedness for non-cutoff Boltzmann using De Giorgi arguments.

\smallskip

In our paper, we establish uniform in $\eps$ estimates for the rescaled non-cutoff Boltzmann equation \eqref{eq:feps_intro}.  Our result in Theorem~\ref{theo:boltzmann} is similar to the ones in \cite{DLSS, DSU2022}, but the proof is quite different. Indeed, thanks to new \emph{integrated-in-time regularization estimates}, we are able to prove the well-posedness of \eqref{eq:feps_intro} using a contraction fixed-point argument in a suitable functional space that takes into account these regularization estimates, which is the main novelty in Theorem~\ref{theo:boltzmann}. More precisely, we first investigate the semigroup $U^\eps$ associated to the linearized operator $\frac{1}{\eps^2} ( L - \eps v \cdot \nabla_x ) $ appearing in \eqref{eq:feps_intro}. We provide boundedness and integrated-in-time regularization estimates for $U^\eps$ (see  Proposition~\ref{prop:estimate_Ueps}), as well as for its integral in time against a source $\int_0^t U^\eps(t-\tau) S(\tau) \, \d \tau$ (see Proposition~\ref{prop:estimate_Ueps_regularization}). 
Together with nonlinear estimates for $\Gamma$ (see Lemma~\ref{lem:nonlinear}), we are then able to take $S$ equal to the nonlinear term $\Gamma(f, f)$ and prove the global well-posedness of mild solutions of \eqref{eq:feps_intro}, namely 
$$
f^\eps(t) = U^\eps(t) f^\eps_0 + \frac{1}{\eps}\int_0^t U^\eps (t-\tau) \Gamma(f^\eps(\tau) , f^\eps(\tau)) \, \d \tau,
$$
by applying a contraction fixed-point argument. The decay estimate is then obtained as a consequence of decay estimates for $U^\eps$ (see  Propositions~\ref{prop:estimate_Ueps_decay_hard_torus} and \ref{prop:estimate_Ueps_decay_hard_wholespace}) and for $\int_0^t U^\eps(t-\tau) S(\tau) \, \d \tau$ (see Propositions~\ref{prop:estimate_Ueps_regularization_hard_torus} and \ref{prop:estimate_Ueps_regularization_hard_wholespace}).
It is important to notice that the fixed-point takes place in the space $\FF^{-1}_x(L^1_\xi L^\infty_t L^2_v (\la v \ra^\ell) \cap L^1_\xi L^2_t H^{s,*}_v (\la v \ra^\ell))$ for the torus case, and in $\FF^{-1}_x(L^1_\xi L^\infty_t L^2_v(\la v \ra^\ell) \cap L^1_\xi L^2_t H^{s,**}_v(\la v \ra^\ell) ) \cap  \FF^{-1}_x(L^p_\xi L^\infty_t L^2_v(\la v \ra^\ell) \cap L^p_\xi L^2_t H^{s,**}_v(\la v \ra^\ell) )$ for the whole space, that is, the integrated-in-time regularization appears in the functional space.

It is worth mentioning that the integrated-in-time regularization estimates as well as the estimates for $\int_0^t U^\eps(t-\tau) S(\tau) \, \d \tau$ are the key ingredient of our method. On the one hand, they are the main novelty that allows us apply a contraction fixed-point argument as explained above. On the other hand, they are also crucial for establishing the strong convergence in the proof of the hydrodynamic limit established below in Theorem~\ref{theo:hydro_limit}.

\subsection{Well-posedness for the Navier-Stokes-Fourier system}

Our second result concerns the global well-posedness of the incompressible Navier-Stokes-Fourier system~\eqref{Navier-Stokes-Fourier system} for small initial data.

\begin{theo}[Global well-posedness for the Navier-Stokes-Fourier system]\label{theo:NSF}
There exists $\eta_1>0$ small enough such that the following holds:
\begin{enumerate}

\item Torus case $\Omega_x = \T^3$: For any initial data $(\rho_0 , u_0,\theta_0) \in \FF_x^{-1}(L^1_\xi)$ satisfying \eqref{eq:meanfree} and $ \| (\widehat \rho_0 , \widehat u_0, \widehat \theta_0) \|_{L^1_\xi} \le \eta_1$, there exists a unique global mild solution $(\rho,u,\theta) \in \FF^{-1}_x ( L^1_\xi L^\infty_t \cap L^1_\xi( \la \xi \ra) L^2_t)$ to the Navier-Stokes-Fourier system~\eqref{Navier-Stokes-Fourier system} satisfying \eqref{eq:meanfree} and the energy estimate
$$
\| (\widehat \rho, \widehat u, \widehat \theta) \|_{L^1_\xi L^\infty_t}
+\| \la \xi \ra (\widehat \rho, \widehat u, \widehat \theta) \|_{L^1_\xi L^2_t}
\lesssim \| (\widehat \rho_0 , \widehat u_0, \widehat \theta_0) \|_{L^1_\xi}.
$$

\item Whole space case $\Omega_x = \R^3$: Let $p \in (3/2, \infty]$. For any initial data $(\rho_0 , u_0,\theta_0) \in \FF_x^{-1}(L^1_\xi \cap L^p_\xi)$ satisfying $ \| (\widehat \rho_0 , \widehat u_0, \widehat \theta_0) \|_{L^1_\xi}  + \| (\widehat \rho_0 , \widehat u_0, \widehat \theta_0) \|_{L^p_\xi} \le \eta_1$, there exists a unique global mild solution $(\rho,u,\theta) \in \FF^{-1}_x ( L^1_\xi L^\infty_t \cap L^1_\xi(|\xi|) L^2_t \cap L^p_\xi L^\infty_t \cap L^p_\xi(|\xi|) L^2_t)$ to the Navier-Stokes-Fourier system~\eqref{Navier-Stokes-Fourier system} satisfying the energy estimate
$$
\begin{aligned}
\| (\widehat \rho, \widehat u, \widehat \theta) \|_{L^1_\xi L^\infty_t}
+\| |\xi| (\widehat \rho, \widehat u, \widehat \theta) \|_{L^1_\xi L^2_t} 
+\| (\widehat \rho, \widehat u, \widehat \theta) \|_{L^p_\xi L^\infty_t}
+\| |\xi| (\widehat \rho, \widehat u, \widehat \theta) \|_{L^p_\xi L^2_t} \\
\lesssim 
\| (\widehat \rho_0 , \widehat u_0, \widehat \theta_0) \|_{L^1_\xi}
+ \| (\widehat \rho_0 , \widehat u_0, \widehat \theta_0) \|_{L^p_\xi}.
\end{aligned}
$$
\end{enumerate}

\end{theo}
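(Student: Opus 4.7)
The plan is to solve~\eqref{Navier-Stokes-Fourier system} by a Duhamel / fixed point argument in the Fourier variable, working directly in the Fourier--Lebesgue space defined by the norm of the theorem statement.

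First I would exploit the algebraic constraints to eliminate $\rho$. The relation $\nabla_x(\rho+\theta)=0$, together with the mean-zero condition in the torus case (or the $L^p_\xi$ integrability of the Fourier transform in the whole space case, which forces the constant value of $\rho+\theta$ to vanish), yields $\rho=-\theta$. Since $\Div_x u=0$ also lets us rewrite $u\cdot\nabla_x\theta = \Div_x(u\theta)$, the unknowns $u$ and $\theta$ decouple from $\rho$ and satisfy
\[
\partial_t u - \nu_1 \Delta_x u = Q_{\mathrm{NS}}(u,u), \qquad \partial_t \theta - \nu_2 \Delta_x \theta = -\Div_x(u\theta).
\]
Both nonlinearities are in divergence form, so in Fourier variables they take the schematic form $\mathrm{i}\xi\cdot(\widehat a * \widehat b)(\xi)$, where $*$ denotes convolution in $\xi$ (a discrete sum in the torus case, an integral in the whole space case).

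Next I would write the mild formulation via Duhamel with the heat kernel $e^{-\nu_j|\xi|^2 t}$ and set up a fixed point on the space $X$ whose norm appears in the theorem. The linear estimate is immediate from the pointwise-in-$\xi$ identities $\|e^{-\nu|\xi|^2 t}\|_{L^\infty_t}=1$ and $\||\xi|\,e^{-\nu|\xi|^2 t}\|_{L^2_t}\lesssim 1$. For the bilinear part, setting $N(a,b)(t,\xi):=\int_0^t e^{-\nu|\xi|^2(t-s)}|\xi|(\widehat a * \widehat b)(s,\xi)\,\d s$, Young's inequality in $t$ yields, pointwise in $\xi$,
\[
\|N(a,b)(\cdot,\xi)\|_{L^\infty_t} + \||\xi|N(a,b)(\cdot,\xi)\|_{L^2_t} \lesssim \|(\widehat a * \widehat b)(\cdot,\xi)\|_{L^2_t},
\]
after which Minkowski and Young's convolution inequality in $\xi$ give the key bilinear estimate
\[
\|\widehat a * \widehat b\|_{L^1_\xi L^2_t} \lesssim \|\widehat a\|_{L^1_\xi L^\infty_t}\|\widehat b\|_{L^1_\xi L^2_t}.
\]
In the torus case, the mean-free conditions~\eqref{eq:meanfree} enforce $\widehat u(0)=\widehat\theta(0)=0$, so effectively $|\xi|\geq 1$ in all sums, and therefore $\|\widehat b\|_{L^1_\xi L^2_t} \lesssim \|\langle\xi\rangle\widehat b\|_{L^1_\xi L^2_t}$, which closes the bilinear estimate in the norm of $X$. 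A Banach contraction argument on a small ball of $X$ then produces a unique global mild solution whenever $\eta_1$ is small enough.

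The hard part will be the whole-space case, where the low frequencies $|\xi|\ll 1$ are present and the weight $|\xi|$ appearing in the $L^2_t$-component of the norm degenerates, so that $\widehat b$ is no longer directly controlled in $L^1_\xi L^2_t$. The additional hypothesis on $\FF_x^{-1}(L^p_\xi)$ with $p>3/2$ is precisely what compensates this: splitting $\widehat b = \widehat b\,\mathbf{1}_{|\xi|\leq 1} + \widehat b\,\mathbf{1}_{|\xi|>1}$, the high-frequency part is controlled by $\||\xi|\widehat b\|_{L^1_\xi L^2_t}$, while for the low-frequency part one combines H\"older in $\xi$ with Young's convolution inequality (for instance $L^1 * L^p \hookrightarrow L^p$ or $L^p * L^{p'}\hookrightarrow L^\infty$) to redistribute the weight between the two factors; the condition $p>3/2$ is exactly the scaling threshold that makes this redistribution consistent, being the Fourier-side counterpart of the $L^3$-critical scaling for Navier--Stokes in $\R^3$. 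Once the bilinear estimate is closed on the full norm of $X$, the contraction argument proceeds as in the torus case.
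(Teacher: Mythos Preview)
Your proposal is correct and follows essentially the same approach as the paper: mild formulation via the heat semigroup in Fourier variables, the pointwise-in-$\xi$ linear bounds $\|e^{-\nu|\xi|^2 t}\|_{L^\infty_t}\lesssim 1$ and $\||\xi|e^{-\nu|\xi|^2 t}\|_{L^2_t}\lesssim 1$, the bilinear estimate via Minkowski and Young's convolution inequality, and then a contraction on a small ball. In particular, your handling of the whole-space low-frequency issue---splitting $\widehat b=\mathbf{1}_{|\xi|\le 1}\widehat b+\mathbf{1}_{|\xi|>1}\widehat b$ and using H\"older with $\mathbf{1}_{|\xi|<1}|\xi|^{-1}\in L^{p'}_\xi$ (which is exactly where $p>3/2$ enters) to control the low-frequency piece by $\||\xi|\widehat b\|_{L^p_\xi L^2_t}$---matches the paper's argument precisely.
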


The incompressible Navier-Stokes equation, that is, the first equation in \eqref{Navier-Stokes-Fourier system}, possesses a vast literature so we only mention a few works in the three dimensional case below, and we refer the reader to the monographs \cite{LR,BCD} and the references therein for more details. On the one hand, global weak solutions for large initial data were obtained in the pioneering work \cite{Leray} (see also \cite{Hopf}). On the other hand, global mild solutions for small initial data were obtained in \cite{FK,Kato,Chemin,Calderon,Cannone,FLRT} in different Lebesgue and Sobolev spaces, and we refer again to the book~\cite{LR} for results in Besov and Morrey spaces. We mention in particular the work of Lei and Lin~\cite{LeiLin} where global mild solutions in the whole space $\R^3$ were constructed in the Fourier-based space $L^1_\xi (|\xi|^{-1}) L^\infty_t$.

Our results in Theorem~\ref{theo:NSF} are maybe not completely new, but we do not have a reference for this precise functional setting (observe that the functional spaces in Theorem~\ref{theo:NSF} correspond exactly to the same functional setting as in the global well-posedness for the Boltzmann equation in Theorem~\ref{theo:boltzmann}). Therefore, and also for the sake of completeness, we shall provide a complete proof of them in Section~\ref{section Navier-Stokes-Fourier}.

Our strategy for obtaining the global solution $u$ for the incompressible Navier-Stokes equation follows a standard fixed-point argument. As in the proof of Theorem~\ref{theo:boltzmann}, we first obtain boundedness and integrated-in-time regularization estimates for the semigroup $V$ associated to the operator $\nu_1 \Delta_x$ (see Proposition~\ref{prop:estimate_V}), as well as for its integral in time against a source $\int_0^t V(t-\tau) S(\tau) \, \d \tau$ (see Proposition~\ref{prop:estimate_V_regularization}). We then combine this with estimates for the nonlinear term $Q_{\mathrm{NS}}$ (see Lemma~\ref{lem:estimate_QNS}) to obtain, thanks to a fixed-point argument, the global well-posedness of mild solutions of the first equation in \eqref{Navier-Stokes-Fourier system}, namely 
$$
u(t) = V(t) u_0 + \int_0^t V (t-\tau) Q_{\mathrm{NS}} (u(\tau) , u(\tau)) \, \d \tau .
$$
Once the solution $u$ is constructed, we can obtain in a similar (and even easier) way the well-posedness of mild solutions of the second equation in \eqref{Navier-Stokes-Fourier system} for the temperature $\theta$. Finally we easily obtain the result for the density $\rho$ thanks to the last equation in \eqref{Navier-Stokes-Fourier system}.

\subsection{Hydrodynamic limit}

Our third result regards the hydrodynamic limit of the rescaled Boltzmann equation, that is, we are interested in the behavior of solutions $(f^\eps)_{\eps \in (0,1]}$ to~\eqref{eq:feps_intro} in the limit $\eps \to 0$.

Let $(\rho_0, u_0 , \theta_0)$ be an initial data veryfying \eqref{eq:rho0_u0_theta0} (and also \eqref{eq:meanfree} in the torus case) and consider the associated global solution $(\rho, u , \theta)$ to the incompressible Navier-Stokes-Fourier system~\eqref{Navier-Stokes-Fourier system} given by Theorem~\ref{theo:NSF}, where the viscosity coefficients $\nu_1, \nu_2 >0$ are given as follows (see \cite{BGL1}): Let us introduce the two unique functions $\Phi$ (which is a matrix-valued function) and $\Psi$ (which is a vector-valued function) orthogonal to $\Ker L$ such that
\[
\frac 1 {\sqrt{\mu}} L(\sqrt{\mu} \Phi) = \frac {|v|^2} 3 I_{3 \times 3} -  v \otimes v , \quad \frac 1 {\sqrt{\mu}} L(\sqrt{\mu} \Psi)  = \frac {5-|v|^2} 2  v ,
\]
then the viscosity coefficients are defined by
\[
\nu_1 = \frac 1 {10}  \int_{\R^3} L(\sqrt{\mu}   \Phi) \Phi \sqrt{\mu} \, \d v ,\quad \nu_2 = \frac 2 {15} \int_{\R^3} \Psi \cdot L(\sqrt{\mu}  \Psi) \sqrt{\mu} \, \d v
\]
We define the initial kinetic distribution $g_0 \in \Ker L$ associated to $(\rho_0, u_0 , \theta_0)$ by
\begin{equation}\label{eq:g_0}
g_0(x,v) = \P g_0 (x,v) = \left[ \rho_0(x) + u_0(x) \cdot v + \theta_0(x) \frac{(|v|^2-3)}{2} 	\right] \sqrt{\mu} (v), 
\end{equation}
and we suppose that $g_0$ is well-prepared in the sense
\begin{equation}\label{eq:well_prepared}
\nabla_x \cdot u_0 = 0 \quad\text{and} \quad \rho_0+\theta_0=0.
\end{equation}
We then consider the kinetic distribution $g(t) \in \Ker L$ associated to $(\rho(t), u(t) , \theta(t))$ by
\begin{equation}\label{eq:g(t)}
g(t,x,v) = \P g(t,x,v) = \left[ \rho(t,x) + u(t,x) \cdot v + \theta(t,x) \frac{(|v|^2-3)}{2} 	\right] \sqrt{\mu} (v).
\end{equation}

\begin{theo}[Hydrodynamic limit]\label{theo:hydro_limit}
Let $(f^\eps_0)_{\eps \in (0,1]}$ satisfy the hypotheses of Theorem~\ref{theo:boltzmann} and consider the associated global unique mild solution $(f^\eps)_{\eps \in (0,1]}$ to \eqref{eq:feps_intro}. 
Let also $(\rho_0, u_0 , \theta_0)$ satisfy the hypotheses of Theorem~\ref{theo:NSF} as well as \eqref{eq:well_prepared}, and consider the associated global unique mild solution $(\rho,u,\theta)$ to \eqref{Navier-Stokes-Fourier system}. Finally, let $g_0 = \P g_0$ be defined by \eqref{eq:g_0} and $g = \P g$ by \eqref{eq:g(t)}.
There exists $0 < \eta_2 < \min(\eta_0,\eta_1)$ such that if 
$$
\begin{aligned}
\max \left( \| \widehat f^\eps_0 \|_{L^1_\xi L^2_v} , \| \widehat g_0 \|_{L^1_\xi L^2_v} \right) &\le \eta_2  \quad \text{in the case} \quad \Omega_x = \T^3,\\
\max \left( \| \widehat f^\eps_0 \|_{L^1_\xi L^2_v} + \| \widehat f^\eps_0 \|_{L^p_\xi L^2_v} , \| \widehat g_0 \|_{L^1_\xi L^2_v} + \| \widehat g_0 \|_{L^p_\xi L^2_v} \right) &\le \eta_2  \quad \text{in the case} \quad \Omega_x = \R^3,
\end{aligned}
$$ 
for all $\eps \in (0,1]$ and
$$
\lim_{\eps \to 0} \| \widehat f^\eps_0 - \widehat g_0 \|_{L^1_\xi L^2_v} = 0,
$$
then there holds
\begin{equation}
\label{main hydrodynamical result}
\lim_{\eps \to 0} \| \widehat f^\eps - \widehat g \|_{L^1_\xi L^\infty_t L^2_v} = 0.
\end{equation}
\end{theo}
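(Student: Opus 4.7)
The plan is to compare $f^\eps$ and $g$ through their Duhamel formulations in the Fourier-in-$x$ setting and to close the estimate in the norm $\|\cdot\|_{L^1_\xi L^\infty_t L^2_v}$. Before any fine analysis, I would reduce to a compact frequency range: the uniform bounds of Theorem~\ref{theo:boltzmann} and Theorem~\ref{theo:NSF} imply that for any $\delta>0$ one can choose $R>0$ such that the contributions of $\{|\xi|>R\}$ to both $\widehat f^\eps$ and $\widehat g$ are $\le\delta$ uniformly in $\eps$. On the compact zone $\{|\xi|\le R\}$, Lebesgue dominated convergence then reduces \eqref{main hydrodynamical result} to a pointwise-in-$\xi$ convergence statement $\widehat f^\eps(\cdot,\xi)\to\widehat g(\cdot,\xi)$ in $L^\infty_t L^2_v$.

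The heart of the argument is a spectral decomposition of the linearized generator $\AA^\eps(\xi) := \eps^{-2}(L - \mathrm{i}\eps v\cdot\xi)$ in the hydrodynamic regime $\eps|\xi| \ll 1$, which is automatic on $|\xi|\le R$ once $\eps$ is small enough. In this zone $\AA^\eps(\xi)$ admits a cluster of five eigenvalues bifurcating from $0$ with expansions $\lambda_j(\eps\xi)/\eps^2 = \pm\mathrm{i} c_j|\xi|/\eps - \nu_j|\xi|^2 + O(\eps)$, while the rest of its spectrum sits to the left of a line $\re z \le -\sigma/\eps^2$. Summing the corresponding hydrodynamic spectral projectors yields a fluid semigroup $U^\eps_{\mathrm{fl}}(t,\xi)$ that converges, as $\eps\to 0$, to the Navier-Stokes-Fourier semigroup acting on $\Ker L$; the acoustic phases $e^{\pm\mathrm{i} c_j|\xi|t/\eps}$ are filtered out thanks to the Boussinesq constraints $\Div_x u_0=0$ and $\nabla_x(\rho_0+\theta_0)=0$ encoded in $\widehat g_0$, together with the assumed strong convergence $\widehat f^\eps_0\to\widehat g_0$ of the initial data. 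This provides the spectral-type estimate announced by the authors and controls the linear contribution $U^\eps(t,\xi)\widehat f^\eps_0(\xi)$.

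For the nonlinearity I would split $\widehat\Gamma(f^\eps,f^\eps) = \widehat\Gamma(\P f^\eps,\P f^\eps) + R^\eps$, where $R^\eps$ collects all terms containing at least one microscopic factor. The estimate $\|\P^\perp\widehat f^\eps\|_{L^1_\xi L^2_t H^{s,*}_v}\lesssim \eps$ from Theorem~\ref{theo:boltzmann}, combined with the bilinear estimates of Lemma~\ref{lem:nonlinear} and the integrated regularization estimates of Proposition~\ref{prop:estimate_Ueps_regularization}, makes the $R^\eps$-contribution of order $O(\eps)$ and thereby absorbs the $\eps^{-1}$ prefactor of the nonlinear term. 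The purely fluid piece $\eps^{-1}\int_0^t U^\eps(t-s)\widehat\Gamma(\P f^\eps,\P f^\eps)\,\d s$ is then identified, via the same spectral decomposition, with the Navier-Stokes-Fourier nonlinearities (the Leray-projected $Q_{\mathrm{NS}}(u,u)$ and the transport term $u\cdot\nabla_x\theta$) up to $O(\eps)$ errors plus a term controlled by $\|\widehat f^\eps-\widehat g\|$. A Gronwall estimate on $\widehat f^\eps-\widehat g$, closed thanks to the smallness threshold $\eta_2\le\min(\eta_0,\eta_1)$, yields the pointwise-in-$\xi$ convergence and hence \eqref{main hydrodynamical result}.

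The hardest step will be the quantitative spectral convergence of $U^\eps(t,\xi)$ to the NSF semigroup uniformly on compact frequency ranges, keeping track simultaneously of the fast acoustic oscillations (of period $\eps/|\xi|$) and of the singular $\eps^{-1}$ prefactor in front of the nonlinearity, so that the Gronwall closure survives. The use of the $L^1_\xi$-framework is crucial here because it allows the frequency reduction to $|\xi|\le R$ by a purely integrability argument, so that the delicate spectral expansion only has to be carried out on bounded $\xi$-regions rather than globally.
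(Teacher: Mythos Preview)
Your overall strategy---spectral decomposition of $U^\eps$, micro/macro splitting of the nonlinearity, and closing by smallness---matches the paper's. But two points deserve correction.

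First, the reduction to compact frequencies followed by \emph{pointwise-in-$\xi$} convergence plus dominated convergence is both unnecessary and unjustified. Dominated convergence would require a dominating function $\sup_{\eps}\|\widehat f^\eps(\cdot,\xi)\|_{L^\infty_t L^2_v}\in L^1_\xi$, which does not follow from the uniform $L^1_\xi$ bounds of Theorem~\ref{theo:boltzmann}. More importantly, the nonlinear term $\widehat\Gamma(f^\eps,f^\eps)(\xi)$ is a convolution in $\xi$, so the equation at a fixed frequency depends on all frequencies and you cannot close a pointwise-in-$\xi$ argument. The paper bypasses this entirely: it works directly in $L^1_\xi L^\infty_t L^2_v$ and writes
\[
\widehat f^\eps-\widehat g
= \widehat U^\eps(\widehat f^\eps_0-\widehat g_0)
+ (\widehat U^\eps-\widehat U)\widehat g_0
+ (\widehat\Psi^\eps[g,g]-\widehat\Psi[g,g])
+ (\widehat\Psi^\eps[f^\eps,f^\eps]-\widehat\Psi^\eps[g,g]),
\]
estimating each piece globally. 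The closure is not Gronwall (there is no time iteration in an $L^\infty_t$ norm) but a direct algebraic absorption: the last piece splits into $O(\eps)$ terms (those containing $\P^\perp f^\eps$) plus terms bounded by $C\eta_2\|\widehat f^\eps-\widehat g\|_{L^1_\xi L^\infty_t L^2_v}$, and $C\eta_2<1$ moves them to the left.

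Second, you gloss over the hardest technical point in the nonlinear analysis: for the acoustic branches $j=1,2$ of $\widehat\Psi^\eps_{j0}[g,g]$, the phase $e^{\mathrm{i}\alpha_j|\xi|(t-s)/\eps}$ is oscillatory and does not vanish by the well-preparedness of $g$ (that argument only works for the \emph{linear} term $U^\eps g_0$). The paper gains the missing factor of $\eps$ here by integration by parts in $s$ against the fast phase, which transfers a time derivative onto $\widehat\Gamma(g(s),g(s))$; this is where the NSF equation for $g$ is used to control $\partial_t\widehat g$. Without this step the $\eps^{-1}$ in front of the nonlinearity is not cancelled on the acoustic modes.
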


\begin{rem}\begin{enumerate}[label=(\roman*)]

\item One can get a explicit rate of convergence in \eqref{main hydrodynamical result} if we suppose that the initial data $g_0$
has some additional regularity in $x$, namely a rate of $\eps^\delta$ if the initial data $g_0$ satisfies $ \Vert \langle \xi \rangle^\delta \widehat g_0 \Vert_{L^1_\xi L^2_v} <\infty$ for $\delta \in (0, 1]$. We refer to \eqref{final hydrodynamical result1} and \eqref{final hydrodynamical result2} for a quantitative version of this result.

\item Our methods can also be applied to the Landau equation with Coulomb potential, and we obtain similar results as in Theorem~\ref{theo:boltzmann} and in Theorem~\ref{theo:hydro_limit}.

\item Our result concerns well-prepared data for the fluid equation, namely $(\rho_0, u_0, \theta_0)$ associated to the initial kinetic distribution $g_0$ satisfies \eqref{eq:well_prepared}. In the whole space, fluid initial data that are not well-prepared could be handled as in \cite{GT} by using dispersive estimates. In the case of the torus, we refer to \cite{Jiang-Xiong} who handle the initial fluid layers for fluid initial data that are not well-prepared. 

\end{enumerate}
\end{rem}

Before giving some comments on the above result and its strategy, we start by providing a short overview of the existing literature on the problem of deriving incompressible Navier-Stokes fluid equations from the kinetic Boltzmann one, and we refer to the book by Saint-Raymond \cite{SR} for a thorough presentation of the topic including other hydrodynamic limits. The first justifications of the link between kinetic and fluid equations were formal and based on asymptotic expansions by Hilbert, Chapman, Cowling and Grad (see \cite{H2, CE, Grad2}). The first rigorous convergence proofs based also on asymptotic expansions were given by Caflisch \cite{C} (see also \cite{LN} and \cite{ELM}). In those papers, the limit is justified up to the first singular time for the fluid equation. Guo \cite{Guo} has justified the limit towards the Navier-Stokes equation and beyond in Hilbert's expansion for the cutoff Boltzmann and Landau equations.

In the framework of large data solutions, the weak convergence of global renormalized solutions of the cutoff Boltzmann equation of \cite{DPL} towards global weak solution to the fluid system were obtained in \cite{BGL1, BGL2, GSR1,GSR2, LM1, LM2, SR}. Moreover, for the case of non-cutoff kernels, 
we refer to \cite{Arsenio} who proved the hydrodynamic limit from global renormalized solutions with defect measure of \cite{AV}.

We now discuss results in the framework of perturbative solutions, that is, solutions near the Maxwellian. Based on the spectral analysis of the linearized cutoff Boltzmann operator performed in \cite{Nicolaenko,CIP, EP}, some hydrodynamic results were obtained in \cite{Nishida,BU,GT}, see also \cite{CRT} for the Landau equation. Moreover, for the non-cutoff Boltzmann equation, we refer to \cite{JXZ} where the authors obtained a result of weak-$*$ convergence in $L^\infty_t (H^2_{x, v})$ towards the fluid system by proving uniform in $\eps$ estimates. Up to our knowledge, our paper is the first to prove a strong convergence towards the incompressible Navier-Stokes-Fourier system for the non-cutoff Boltzmann equation. We also note here that, compared to former hydrodynamical limit results, in our work we do not need any derivative assumption on the initial data.

We now describe our strategy in order to obtain strong convergence results. Our approach is inspired by the one used in \cite{BU} for the cutoff Boltzmann equation, which was also used more recently in \cite{B, GT} still for cutoff kernels and in \cite{CRT} for the Landau equation. Indeed, as in \cite{GT, CRT}, using the spectral analysis performed in \cite{EP,YY, YY2}, in order to prove our main convergence result, we reformulate the fluid equation in a kinetic fashion and we then study the equation satisfied by the difference between the kinetic and the fluid solutions. More precisely, we denote the kinetic solution by
$$
f^\eps(t) = U^\eps(t) f^\eps_0 + \Psi^\eps[f^\eps,f^\eps] (t),
$$
and we observe, thanks to \cite{BU}, that the kinetic distribution $g$ associated to the fluid solution $(\rho, u ,\theta)$ through \eqref{eq:g(t)} satisfies
$$
g(t) = U(t) g_0 + \Psi[g,g](t),
$$
where $U$ is obtained as the limit of $U^\eps$ and $\Psi$ as the limit of $\Psi^\eps$ when $\eps \to 0$. The idea is then to compute the norm of the difference $f^\eps - g$ by using convergence estimates from $U^\eps$ to $U$ (see Lemma~\ref{convergence U epsilon U}) and from $\Psi^\eps$ to $\Psi$ (see Lemma~\ref{decomposition Psi epsilon}), which are based on the spectral study of \cite{YY, YY2}, together with uniform in $\eps$ estimates for the kinetic solution $f^\eps$ from Theorem~\ref{theo:boltzmann}.
This was achieved in \cite{GT} for the cutoff Boltzmann equation by applying a fixed-point method, however, as explained in \cite{CRT}, this can not be directly applied to the non-cutoff Boltzmann and Landau equations due to the anisotropic loss of regularity in the nonlinear collision operator $\Gamma$. To overcome this difficulty for the Landau equation, the authors in \cite{CRT} proved new pointwise-in-time regularization estimates not only for the semigroup $U^\eps$ but also for the solution to the nonlinear rescaled kinetic equation, which were then used to close the estimates and obtain a result of strong convergence.

In our work, we propose a new method in order to obtain strong convergence in the hydrodynamic limit using only the integrated-in-time regularization estimates (as opposed to pointwise-in-time regularization estimates as in \cite{CRT}) for the semigroup $U^\eps$ as well as for $\int_0^t U^\eps(t-\tau) S(\tau) \, \d \tau$. More precisely, the fixed-point argument in the space $\FF^{-1}_x(L^1_\xi L^\infty_t L^2_v \cap L^1_\xi L^2_t H^{s,*}_v )$ for the torus case, or in $\FF^{-1}_x(L^1_\xi L^\infty_t L^2_v \cap L^1_\xi L^2_t H^{s,**}_v ) \cap  \FF^{-1}_x(L^p_\xi L^\infty_t L^2_v \cap L^p_\xi L^2_t H^{s,**}_v )$ for the whole space, used for the global well-posedness in Theorem~\ref{theo:boltzmann} above together with the corresponding energy estimates are sufficient to estimate the $\FF^{-1}_x(L^1_\xi L^\infty_t L^2_v)$-norm of the difference $f^\eps - g$ and obtain strong convergence.

\subsection{Organization of the paper}
In Section~\ref{section linearized Boltzmann}, we first establish basic properties for the rescaled linearized non-cutoff Boltzmann collision operator and then compute the basic estimates for the associated semigroup. In Section~\ref{section well-posedness Boltzmann} we prove the  well-posedness for the rescaled non-cutoff Boltzmann equation. We establish well-posedness for the Navier-Stokes-Fourier system in Section~\ref{section Navier-Stokes-Fourier}. Finally we obtain the hydrodynamical limit result in Section~\ref{section hydrodynamical limit}.

\section{Linearized Boltzmann operator}\label{section linearized Boltzmann}

It is well-known, see for instance \cite{MS} and the references therein, that the linearized Boltzmann collision operator $L$, defined in~\eqref{eq:def:L}, satisfies the following coercive-type inequality
\begin{equation}
\la L f , f \ra_{L^2_v} \le - \lambda \| \P^\perp f \|_{H^{s,*}_v}^2.
\end{equation}
where we recall that $\P^\perp = I - \P$ and $\P$ is the orthogonal projection onto $\Ker L$ given by \eqref{eq:def:Pf}.
For all $\eps \in (0,1]$ and all $\xi \in \Omega'_\xi$, we denote by $\Lambda^\eps (\xi)$ the Fourier transform in space of the full linearized operator $\frac{1}{\eps^2} L - \frac{1}{\eps} v \cdot \nabla_x$, namely 
\begin{equation}\label{eq:def:Lambdaeps_xi}
\Lambda^\eps (\xi) :=  \frac{1}{\eps^2} ( L - \mathrm{i} \eps v \cdot \xi ).
\end{equation}

We first gather dissipativity results for the operator $\Lambda^\eps (\xi)$ obtained for instance in \cite{Strain}, that we reformulate below as in \cite{CG} and inspired from \cite{CRT,BCMT} in order to take into account the different scales related to the parameter $\eps \in (0,1]$. For every $\xi \in \Omega'_\xi$ we define
\begin{equation*}
	\begin{aligned}
		B[ f ,  g ] (\xi)
		&:=  \frac{\delta_1 \mathrm{i}  }{\la \xi\ra^2}  \xi \theta[\widehat f(\xi)]  \cdot M[ \P^\perp \widehat g(\xi)]  
		+  \frac{\delta_1 \mathrm{i} }{\la \xi\ra^2}   \xi \theta[\widehat g(\xi)] \cdot M[ \P^\perp \widehat f (\xi)] \\
		&\quad 
		+  \frac{\delta_2\mathrm{i}}{\la \xi\ra^2} (\xi \otimes u[\widehat f(\xi)] )^{\mathrm{sym}} : \left\{ \Theta[\P^\perp \widehat g(\xi)] + \theta[\widehat g(\xi)] I \right\}\\
		&\quad 
		+  \frac{\delta_2\mathrm{i}}{\la \xi\ra^2} (\xi \otimes u[\widehat g(\xi)] )^{\mathrm{sym}} : \left\{\Theta[\P^\perp \widehat f(\xi)] + \theta[\widehat f(\xi)] I \right\}\\
		&\quad 
		+  \frac{\delta_3\mathrm{i} }{\la \xi\ra^2} \xi \rho[\widehat f(\xi)] \cdot  u[\widehat g(\xi)] 
		+  \frac{\delta_3\mathrm{i} }{\la \xi\ra^2} \xi \rho[\widehat g(\xi)] \cdot  u[\widehat f(\xi)] ,
	\end{aligned}
\end{equation*}
with constants $0 < \delta_3 \ll \delta_2 \ll \delta_1 \ll 1$, where $I$ is the $3 \times 3$ identity matrix and the moments $M$ and $\Theta$ are defined by
$$
M [f] = \int_{\R^3} f v (|v|^2-5) \sqrt{\mu}(v) \, \d v, \qquad 
\Theta[f] = \int_{\R^3} f \left(v \otimes v - I\right) \sqrt{\mu}(v) \, \d v,
$$
and where for vectors $a, b \in \R^3$ and matrices $A, B \in \R^{3 \times 3}$, we denote
$$
(a \otimes b)^{\mathrm{sym}} = \frac{1}{2} (a_j b_k + a_kb_j)_{1 \le j,k \le 3}, \qquad 
A : B = \sum_{j,k=1}^3 A_{jk} B_{jk}.
$$

We then define the inner product $\la\!\la \cdot , \cdot \ra\!\ra_{L^2_v}$ on $L^2_v$ (depending on $\xi$) by
\begin{equation}\label{eq:new_inner_product}
\la\!\la \widehat f(\xi), \widehat g (\xi) \ra\!\ra_{L^2_v} 
:=  \la \widehat f(\xi), \widehat g (\xi) \ra_{L^2_v} + \eps  B[f, g](\xi) ,
\end{equation}
and the associated norm
\begin{equation}\label{eq:new_norm}
\Nt \widehat f(\xi) \Nt_{L^2_v}^2 :=  \la\!\la \widehat f(\xi), \widehat f (\xi) \ra\!\ra_{L^2_v} .
\end{equation}
In a similar fashion, for any $\ell>0$, we define the inner product $\la\!\la \cdot , \cdot \ra\!\ra_{L^2_v (\la v \ra^\ell)}$ on $L^2_v(\la v \ra^\ell)$ (depending on $\xi$) by
\begin{equation}\label{eq:new_inner_product_ell}
\begin{aligned}
\la\!\la \widehat f(\xi), \widehat g (\xi) \ra\!\ra_{L^2_v(\la v \ra^\ell)} 
&:=  \la \widehat f(\xi), \widehat g (\xi) \ra_{L^2_v} 
+\delta_0 \la \P^\perp \widehat f(\xi), \P^\perp \widehat g (\xi) \ra_{L^2_v (\la v \ra^\ell)} \\
&\quad + \eps  B[f, g](\xi) ,
\end{aligned}
\end{equation}
with $\delta_1 \ll \delta_0 \ll 1$, and the associated norm
\begin{equation}\label{eq:new_norm_ell}
\Nt \widehat f(\xi) \Nt_{L^2_v(\la v \ra^\ell)}^2 :=  \la\!\la \widehat f(\xi), \widehat f (\xi) \ra\!\ra_{L^2_v(\la v \ra^\ell)} .
\end{equation}
It is important to notice the factor $\eps$ in front of the last term in the right-hand side of \eqref{eq:new_inner_product} and \eqref{eq:new_inner_product_ell}.

Arguing as in \cite{Strain}, the main difference being the factor $\eps$ at the second term of \eqref{eq:new_inner_product} and \eqref{eq:new_inner_product_ell}, we obtain the following dissipativity result.

\begin{prop}\label{prop:hypocoercivity}
We can choose $0 < \delta_3 \ll \delta_2 \ll \delta_1 \ll \delta_0 \ll 1$ appropriately such that:
\begin{enumerate}

\item The new norm $\Nt \cdot \Nt_{L^2_v( \la v \ra^\ell)}$ is equivalent to the usual norm $\| \cdot \|_{L^2_v( \la v \ra^\ell)}$ on $L^2_v( \la v \ra^\ell)$ with bounds that are independent of $\xi$ and $\eps$.

\item If $\Omega_x = \T^3$, for every $f$ satisfying \eqref{eq:normalization2} we have, for all $\xi \in \Z^3$,
$$
\re \la\!\la \Lambda^\eps (\xi) \widehat f (\xi) , \widehat f (\xi) \ra\!\ra_{L^2_v (\la v \ra^{\ell})} \le - \lambda_0 \left( \frac{1}{\eps^2} \| \P^\perp \widehat f (\xi) \|_{H^{s,*}_v(\la v \ra^{\ell})}^2 + \| \P \widehat f (\xi) \|_{L^2_v}^2 \right),
$$
for some constant $\lambda_0>0$.

\item If $\Omega_x = \R^3$, for every $f$ we have, for all $\xi \in \R^3$,
$$
\re \la\!\la \Lambda^\eps (\xi) \widehat f (\xi) , \widehat f (\xi) \ra\!\ra_{L^2_v (\la v \ra^{\ell})} \le - \lambda_0 \left( \frac{1}{\eps^2} \| \P^\perp \widehat f (\xi) \|_{H^{s,*}_v(\la v \ra^{\ell})}^2 + \frac{|\xi|^2}{\la \xi \ra^2}\| \P \widehat f (\xi) \|_{L^2_v}^2 \right),
$$
for some constant $\lambda_0>0$.

\end{enumerate}

\end{prop}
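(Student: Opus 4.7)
\emph{Strategy.} The statement is a classical Kawashima--Strain type hypocoercivity estimate. The linearized operator $L$ is dissipative only on $(\Ker L)^\perp$, so in order to produce dissipation on the macroscopic part $\P\widehat f$ one adds the correction $B$, tailored to the evolution of the macroscopic moments of $\widehat f$ under $\Lambda^\eps$. A direct computation using $L(\Ker L) = 0$ gives
\begin{align*}
\la \Lambda^\eps \widehat f, \sqrt{\mu}\ra_{L^2_v} &= -\tfrac{\mathrm{i}}{\eps}\,\xi\cdot u[\widehat f],\\
\la \Lambda^\eps \widehat f, v\sqrt{\mu}\ra_{L^2_v} &= -\tfrac{\mathrm{i}}{\eps}\bigl((\rho[\widehat f]+\theta[\widehat f])\xi + \Theta[\P^\perp\widehat f]\xi\bigr),\\
\la \Lambda^\eps \widehat f, \tfrac{|v|^2-3}{3}\sqrt{\mu}\ra_{L^2_v} &= -\tfrac{\mathrm{i}}{3\eps}\bigl(2\,\xi\cdot u[\widehat f] + \xi\cdot M[\P^\perp\widehat f]\bigr),
\end{align*}
and the explicit $\eps$ prefactor in \eqref{eq:new_inner_product} is what compensates the $1/\eps$ from the transport term and yields $\eps$-uniform macroscopic dissipation.

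\emph{Item (1): norm equivalence.} Every summand in $B[\widehat f,\widehat f](\xi)$ is bounded by
$$
\frac{|\xi|}{\la\xi\ra^2}\,\|\P\widehat f(\xi)\|_{L^2_v}\bigl(\|\P\widehat f(\xi)\|_{L^2_v} + \|\P^\perp\widehat f(\xi)\|_{L^2_v}\bigr) \lesssim \|\widehat f(\xi)\|_{L^2_v}^2,
$$
since $|\xi|/\la\xi\ra^2 \leq 1$ and the moments $\rho, u, \theta, M[\P^\perp], \Theta[\P^\perp]$ are continuous on $L^2_v$. Hence $|\eps B[\widehat f,\widehat f](\xi)| \lesssim \eps(\delta_1+\delta_2+\delta_3) \|\widehat f(\xi)\|_{L^2_v}^2$, which for $\eps \leq 1$ and the $\delta_i$'s small enough gives the two-sided bound uniformly in $\xi$ and $\eps$.

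\emph{Items (2)--(3): dissipation.} Split
$$
\re \la\!\la \Lambda^\eps \widehat f, \widehat f\ra\!\ra_{L^2_v} = \frac{1}{\eps^2}\,\re\la L\widehat f, \widehat f\ra_{L^2_v} + \eps\,\re\bigl\{B[\Lambda^\eps \widehat f, \widehat f] + B[\widehat f, \Lambda^\eps\widehat f]\bigr\}.
$$
The skew-adjoint transport term does not contribute to the first summand, which therefore gives $-(\lambda/\eps^2)\|\P^\perp\widehat f\|_{H^{s,*}_v}^2$ by the standard coercivity of $L$. For the $B$-part, substituting the moment evolution above into the three layers of $B$ produces, after multiplying by $\eps$: the $\delta_3$-coupling $\rho$-$u$ creates a good term $-c\,\delta_3\,(|\xi|^2/\la\xi\ra^2)|\rho[\widehat f]|^2$ via the pairing with $\partial_t \rho$; the $\delta_2$-coupling $u$-$(\theta I + \Theta)$ creates $-c\,\delta_2\,(|\xi|^2/\la\xi\ra^2)|u[\widehat f]|^2$ via the $(\rho+\theta)\xi$ part of $\partial_t u$; and the $\delta_1$-coupling $\theta$-$M$ creates $-c\,\delta_1\,(|\xi|^2/\la\xi\ra^2)|\theta[\widehat f]|^2$ via $\partial_t \theta$. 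Crossed errors between macroscopic modes are absorbed using the hierarchy $\delta_3 \ll \delta_2 \ll \delta_1$ and Young's inequality, whereas the $\P^\perp$-cross errors (from $\Theta[\P^\perp]\xi$ in $\partial_t u$, from $M[\P^\perp]$ in $\partial_t\theta$, and from the microscopic moments already present inside $B$) are absorbed into the large microscopic dissipation: Young at rate $\eps^\alpha$ applied to an $\eps$-prefactored term transfers $O(\eps^{1-\alpha})$ to the macroscopic side and $O(\eps^{1+\alpha})$ to the microscopic side, so $\alpha = 1$ gives $O(1)$ macroscopic dissipation and only $O(\eps^2) \ll O(\eps^{-2}) \|\P^\perp\widehat f\|^2_{H^{s,*}_v}$ microscopic contributions. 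Summing the three layers proves item (3).

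\emph{Torus case and main obstacle.} For item (2), the constraint \eqref{eq:normalization2} forces $\P\widehat f(0) = 0$, so the macroscopic estimate is vacuous at $\xi = 0$; for $\xi \in \Z^3\setminus\{0\}$, $|\xi| \geq 1$ yields $|\xi|^2/\la\xi\ra^2 \geq 1/2$, hence item (3) specialises to item (2). The hardest aspect of the argument is the careful bookkeeping of the $\eps$-powers in the crossed error terms and the verification that the hierarchical choice $\delta_3 \ll \delta_2 \ll \delta_1 \ll \delta_0 \ll 1$ closes the absorption loop ($\delta_0$ acting as an overall scaling that dominates $\delta_1$), exactly as in \cite{Strain}; the only substantive change in our setting is to track the extra $\eps$ factor coming from the prefactor in \eqref{eq:new_inner_product} and confirm that the resulting constants are uniform in $\eps \in (0,1]$.
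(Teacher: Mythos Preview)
Your proposal is correct and takes the same approach as the paper, which provides no details beyond the sentence ``Arguing as in \cite{Strain}, the main difference being the factor $\eps$ at the second term of \eqref{eq:new_inner_product}''; your sketch is in fact considerably more explicit than the paper's own treatment. One caution if you flesh out the computation: in each layer the good macroscopic dissipation actually arises from the action of $\Lambda^\eps$ on the \emph{higher}-moment factor of the coupling (so $|\rho|^2$ comes from $u[\Lambda^\eps\widehat f]\ni(\rho+\theta)\xi$ in the $\delta_3$ layer, $|u|^2$ from $(\Theta+\theta I)[\Lambda^\eps\widehat f]$ in the $\delta_2$ layer, and $|\theta|^2$ from $M[\Lambda^\eps\widehat f]$ in the $\delta_1$ layer), rather than from the lower-moment factor as your wording suggests---this does not affect the validity of the argument since you defer the bookkeeping to \cite{Strain}, but following your attributions literally would not produce the stated good terms.
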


The aim of this remainder section is to obtain, using the dissipativity result of Proposition~\ref{prop:hypocoercivity}, decay and regularization estimates for the semigroup associated to the linearized operator $\widehat \Lambda^\eps$. We denote in the sequel by
\begin{equation}\label{eq:def:hatUeps}
\widehat U^\eps (t,\xi) = e^{t \Lambda^\eps (\xi)},
\end{equation}
the semigroup associated to $\Lambda^\eps (\xi)$, and by 
\begin{equation}\label{eq:def:Ueps}
U^\eps(t) = \FF_x^{-1} \widehat U^\eps (t) \FF_x ,
\end{equation}
the semigroup associated to $\frac{1}{\eps^2} ( L - \eps v \cdot \nabla_x ) $.

\subsection{Boundedness and regularization estimates}

We first provide boundedness and integrated-in-time regularization estimates for the semigroup $U^\eps$ (see Proposition~\ref{prop:estimate_Ueps}) as well as its integral in time against a source $\int_0^t U^\eps(t-\tau) S(\tau) \, \d \tau$ (see Proposition~\ref{prop:estimate_Ueps_regularization}). These are the key estimates we shall use later in order to prove the well-posedness results for the rescaled Boltzmann equation~\eqref{eq:feps_intro} in Theorem~\ref{theo:boltzmann}. They are also crucial for establishing the convergence of some of the terms in the proof of the hydrodynamic limit in Theorem~\ref{theo:hydro_limit}.

\begin{prop}\label{prop:estimate_Ueps}
Let $\ell \ge 0$ and $p \in [1,\infty]$.
Let $\widehat f_0 \in L^1_\xi L^2_v (\la v \ra^\ell)$ and suppose moreover that $f_0$ verifies \eqref{eq:normalization2} in the torus case $\Omega_x = \T^3$. Then
\begin{multline*}
\|  \widehat U^\eps (\cdot) \widehat f_0 \|_{L^p_\xi L^\infty_t L^2_v(\la v \ra^\ell) } 
+\frac{1}{\eps} \| \P^\perp \widehat U^\eps (\cdot) \widehat f_0 \|_{L^p_\xi L^2_t H^{s,*}_v (\la v \ra^\ell)}
+ \left\|  \frac{|\xi|}{\la \xi \ra} \P \widehat U^\eps (\cdot) \widehat f_0 \right\|_{L^p_\xi L^2_t L^2_v } 
\lesssim \| \widehat f_0 \|_{L^p_\xi L^2_v (\la v \ra^\ell)} ,
\end{multline*}
and moreover, in the torus case, we also have that $U^\eps (t) f_0$ verifies~\eqref{eq:normalization2} for all $t \ge 0$.
\end{prop}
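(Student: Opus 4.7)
The plan is to work pointwise in the Fourier variable $\xi \in \Omega'_\xi$, perform an energy estimate in the modified inner product $\la\!\la\cdot,\cdot\ra\!\ra_{L^2_v}$ introduced in~\eqref{eq:new_inner_product}, and then apply the $L^p_\xi$ norm at the end. For each fixed $\xi$, set $\widehat f(t,\xi) := \widehat U^\eps(t,\xi)\widehat f_0(\xi)$, which satisfies $\partial_t\widehat f(t,\xi) = \Lambda^\eps(\xi)\widehat f(t,\xi)$. In the torus case, I first check that the normalization~\eqref{eq:normalization2} is preserved in time: evaluating the equation at $\xi = 0$ gives $\partial_t\widehat f(t,0) = \eps^{-2} L\widehat f(t,0)$, and since $L$ preserves the collision invariants $\{\sqrt\mu, v\sqrt\mu, |v|^2\sqrt\mu\}$, the moments $\rho[\widehat f(t,0)]$, $u[\widehat f(t,0)]$, $\theta[\widehat f(t,0)]$ are constant in time, hence identically zero. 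This proves the last assertion of the proposition; in particular $\P\widehat f(t,0) = 0$, which is exactly what is needed to apply item~(2) of Proposition~\ref{prop:hypocoercivity} at every time.

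Differentiating $\Nt\widehat f(t,\xi)\Nt^2_{L^2_v}$ in time and using item~(2) or~(3) of Proposition~\ref{prop:hypocoercivity}, I obtain, for all $\xi \in \Omega'_\xi$,
\[
\frac{d}{dt}\Nt\widehat f(t,\xi)\Nt^2_{L^2_v} \leq -2\lambda_0\Bigl(\frac{1}{\eps^2}\|\P^\perp\widehat f(t,\xi)\|^2_{H^{s,*}_v} + c(\xi)\|\P\widehat f(t,\xi)\|^2_{L^2_v}\Bigr),
\]
where $c(\xi) = 1$ in the torus case and $c(\xi) = |\xi|^2/\la\xi\ra^2$ in the whole space case. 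Integrating in time and invoking the $\eps$- and $\xi$-uniform norm equivalence $\Nt\cdot\Nt_{L^2_v} \simeq \|\cdot\|_{L^2_v}$ from Proposition~\ref{prop:hypocoercivity}(1), this yields the pointwise-in-$\xi$ bound
\[
\sup_{t \geq 0}\|\widehat f(t,\xi)\|^2_{L^2_v} + \frac{1}{\eps^2}\int_0^\infty\|\P^\perp\widehat f(\tau,\xi)\|^2_{H^{s,*}_v}\,d\tau + c(\xi)\int_0^\infty\|\P\widehat f(\tau,\xi)\|^2_{L^2_v}\,d\tau \lesssim \|\widehat f_0(\xi)\|^2_{L^2_v}.
\]

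Since $|\xi|^2/\la\xi\ra^2 \leq 1$, the coefficient $c(\xi)$ controls $|\xi|^2/\la\xi\ra^2$ in both cases; in the torus case this is the elementary inequality $|\xi|^2/\la\xi\ra^2 \leq 1$, while at $\xi = 0$ the weighted term $\||\xi|\la\xi\ra^{-1}\P\widehat f\|^2_{L^2_v}$ in the proposition simply vanishes. Taking square roots and then the $L^p_\xi$ norm (with the usual modification for $p = \infty$) of each of the three resulting pointwise-in-$\xi$ inequalities and summing produces the stated estimate. No serious obstacle is expected: the analytical content is concentrated in Proposition~\ref{prop:hypocoercivity}, and the only point requiring care is the preservation of the normalization~\eqref{eq:normalization2} in the torus case, needed to apply item~(2) of that proposition, which is handled at the outset and also yields the final claim of the statement.
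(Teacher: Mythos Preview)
Your proposal is correct and follows essentially the same route as the paper's proof: differentiate the modified norm $\Nt\cdot\Nt_{L^2_v}$ pointwise in $\xi$, apply the hypocoercivity inequality of Proposition~\ref{prop:hypocoercivity}, integrate in time, use the norm equivalence, take square roots, and then take the $L^p_\xi$ norm. Your treatment of the preservation of~\eqref{eq:normalization2} via the $\xi=0$ mode is slightly more explicit than the paper's one-line remark, but the argument is the same.
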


\begin{rem}\label{rem:prop:estimate_Ueps}
Observe that, in the torus case $\Omega_x = \T^3$, one can replace the term $ \frac{|\xi|}{\la \xi \ra} \P \widehat U^\eps (\cdot) \widehat f_0$ in above estimate by $\P \widehat U^\eps (\cdot) \widehat f_0$ since $U^\eps (t) f_0$ verifies~\eqref{eq:normalization2}.
\end{rem}

\begin{proof}
Let $f(t) = U^\eps(t) f_0$ for all $t \ge 0$, which satisfies the equation
\begin{equation}\label{eq:Ueps_f0}
\partial_t f = \frac{1}{\eps^2} (L - \eps v \cdot \nabla_x) f, \quad 
f_{| t=0} = f_0.
\end{equation}
We already observe that in the case of the torus, $f(t)$ verifies \eqref{eq:normalization2} thanks to the properties of $L$.
Moreover, for all $\xi \in \Z^3$ (if $\Omega_x = \T^3$) or all $ \xi \in \R^3$ (if $\Omega_x = \R^3)$, the Fourier transform in space $\widehat f$ satisfies
\begin{equation}\label{eq:hatUeps_hatf0}
\partial_t \widehat f(\xi) = \Lambda^\eps (\xi) \widehat f(\xi), \quad \widehat f(\xi)_{|t=0} = \widehat f_0(\xi).
\end{equation}

Using Proposition~\ref{prop:hypocoercivity} we have, for all $t \ge 0$,
$$
\begin{aligned}
\frac{1}{2} \frac{\d}{\d t} \Nt \widehat f(\xi) \Nt_{L^2_v(\la v \ra^\ell) }^2
&= \re \la\!\la \Lambda^\eps (\xi) \widehat f (\xi) , \widehat f (\xi) \ra\!\ra_{L^2_v (\la v \ra^\ell)} \\
&\le - \lambda_0 \left( \frac{1}{\eps^2} \| \P^\perp \widehat f (\xi) \|_{H^{s,*}_v(\la v \ra^\ell)}^2 + \frac{|\xi|^2}{\la \xi \ra^2}\| \P \widehat f (\xi) \|_{L^2_v}^2 \right),
\end{aligned}
$$
which implies, for all $t \ge 0$,
$$
\begin{aligned}
\| \widehat f(t,\xi) \|_{L^2_v(\la v \ra^\ell)}^2
+\frac{1}{\eps^2}\int_0^t \| \P^\perp \widehat f(\tau,\xi) \|_{H^{s,*}_v(\la v \ra^\ell)}^2 \, \d \tau
+\int_0^t  \frac{|\xi|^2}{\la \xi \ra^2} \| \P \widehat f(\tau,\xi) \|_{L^2_v}^2 \, \d \tau 
\lesssim \| \widehat f_0 (\xi)\|_{L^2_v(\la v \ra^\ell)}^2.
\end{aligned}
$$
where we have used that $\Nt \widehat f(\xi) \Nt_{L^2_v}$ is equivalent to $\| \widehat f(\xi) \|_{L^2_v}$ independently of $\xi$ and $\eps$.
Taking the supremum in time and then taking the square-root of previous estimate yields
$$
\begin{aligned}
\| \widehat f(\xi) \|_{L^\infty_t L^2_v(\la v \ra^\ell)}
+\frac{1}{\eps}\| \P^\perp \widehat f(\xi) \|_{L^2_t H^{s,*}_v(\la v \ra^\ell)}
+\left\|  \frac{|\xi|}{\la \xi \ra} \P \widehat f(\xi) \right\|_{L^2_t L^2_v}
\lesssim \| \widehat f_0 (\xi) \|_{L^2_v},
\end{aligned}
$$
and we conclude by taking the $L^p_\xi$ norm.
\end{proof}

\begin{prop}\label{prop:estimate_Ueps_regularization}
Let $\ell \ge 0$ and $p \in [1,\infty]$.
Let $S=S(t,x,v)$ verify $\P S = 0$ and $ \la v \ra^\ell \widehat S \in L^p_\xi L^2_t (H^{s,*}_v)'$, and denote
$$
g_S(t) = \int_0^t  U^\eps (t-\tau) S(\tau) \, \d \tau.
$$
Then
$$
\begin{aligned}
\left\|  \widehat g_S \right\|_{L^p_\xi L^\infty_t L^2_v(\la v \ra^\ell)} 
&+\frac{1}{\eps}\|  \P^\perp \widehat g_S \|_{L^p_\xi L^2_t H^{s,*}_v(\la v \ra^\ell)} 
+\left\| \frac{|\xi|}{\la \xi \ra} \P \widehat g_S  \right\|_{L^p_\xi L^2_t L^2_v} 
\lesssim \eps \| \la v \ra^\ell \widehat S \|_{L^p_\xi L^2_t (H^{s,*}_v)'} .
\end{aligned}
$$
\end{prop}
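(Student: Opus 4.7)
The plan is to work in Fourier in the space variable and close a weighted energy estimate based on the norm $\Nt \cdot \Nt_{L^2_v}$ from~\eqref{eq:new_norm}, so as to use the dissipativity in Proposition~\ref{prop:hypocoercivity}. Writing $\widehat g := \widehat g_S$, the Duhamel formula gives $\partial_t \widehat g(\xi) = \Lambda^\eps(\xi) \widehat g(\xi) + \widehat S(\xi)$ with $\widehat g(0,\xi) = 0$. Since $\P \widehat S(0) = 0$, testing against the collision invariants shows that $\P \widehat g(t,0) = 0$ for all $t$ in the torus case, so Proposition~\ref{prop:hypocoercivity} applies at every frequency. Taking the real part of the inner product of the equation against $\widehat g(\xi)$ in $\la\!\la \cdot, \cdot \ra\!\ra_{L^2_v}$ yields
\begin{equation*}
\tfrac{1}{2}\tfrac{d}{dt}\Nt \widehat g(\xi) \Nt^2_{L^2_v} + \lambda_0\Big(\tfrac{1}{\eps^2}\|\P^\perp \widehat g(\xi)\|^2_{H^{s,*}_v} + \tfrac{|\xi|^2}{\la\xi\ra^2}\|\P \widehat g(\xi)\|^2_{L^2_v}\Big) \le \re \la\!\la \widehat S(\xi), \widehat g(\xi) \ra\!\ra_{L^2_v}.
\end{equation*}

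The source contribution splits as $\la \widehat S, \widehat g\ra_{L^2_v} + \eps B[S,g](\xi)$. For the first piece, $\P \widehat S = 0$ combined with $\P \widehat g \in \Ker L$ gives $\la \widehat S, \widehat g\ra_{L^2_v} = \la \widehat S, \P^\perp \widehat g\ra_{L^2_v}$, which by~\eqref{eq:def:Hs*v'} is bounded by $\|\widehat S\|_{(H^{s,*}_v)'}\|\P^\perp \widehat g\|_{H^{s,*}_v}$. For $\eps B[S,g](\xi)$, the vanishing of the macroscopic moments $\rho[\widehat S] = u[\widehat S] = \theta[\widehat S] = 0$ kills four of the six defining terms of $B$; only
\begin{equation*}
\tfrac{\delta_1 \mathrm{i}}{\la\xi\ra^2}\xi\,\theta[\widehat g]\cdot M[\widehat S] \quad \text{and} \quad \tfrac{\delta_2 \mathrm{i}}{\la\xi\ra^2}(\xi \otimes u[\widehat g])^{\mathrm{sym}} : \Theta[\widehat S]
\end{equation*}
survive. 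Since $v(|v|^2-5)\sqrt\mu$ and $(v\otimes v - I)\sqrt\mu$ belong to $H^{s,*}_v$ (smooth, Gaussian-decaying), duality gives $|M[\widehat S]|,|\Theta[\widehat S]| \lesssim \|\widehat S\|_{(H^{s,*}_v)'}$, while $|u[\widehat g]|,|\theta[\widehat g]| \lesssim \|\P\widehat g\|_{L^2_v}$; using $|\xi|/\la\xi\ra^2 \le |\xi|/\la\xi\ra$ we obtain
\begin{equation*}
|\eps B[S,g](\xi)| \lesssim \eps\, \tfrac{|\xi|}{\la\xi\ra}\|\widehat S(\xi)\|_{(H^{s,*}_v)'}\|\P\widehat g(\xi)\|_{L^2_v}.
\end{equation*}

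A Young inequality with suitably chosen weights absorbs half of $\tfrac{1}{\eps^2}\|\P^\perp\widehat g\|^2_{H^{s,*}_v}$ and half of $\tfrac{|\xi|^2}{\la\xi\ra^2}\|\P\widehat g\|^2_{L^2_v}$ into the dissipation, leaving $C\eps^2\|\widehat S(\xi)\|^2_{(H^{s,*}_v)'}$ on the right. Integrating in time from $0$, using $\widehat g(0,\xi) = 0$ and the norm equivalence $\Nt \cdot \Nt_{L^2_v} \simeq \|\cdot\|_{L^2_v}$ uniform in $\xi,\eps$, and taking the supremum in $t$ gives the pointwise-in-$\xi$ bound
\begin{equation*}
\|\widehat g(\xi)\|^2_{L^\infty_t L^2_v} + \tfrac{1}{\eps^2}\|\P^\perp\widehat g(\xi)\|^2_{L^2_t H^{s,*}_v} + \Big\|\tfrac{|\xi|}{\la\xi\ra}\P\widehat g(\xi)\Big\|^2_{L^2_t L^2_v} \lesssim \eps^2\|\widehat S(\xi)\|^2_{L^2_t (H^{s,*}_v)'}.
\end{equation*}
Taking square roots and then the $L^p_\xi$-norm (which commutes with the pointwise-in-$\xi$ inequality) concludes.

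The main obstacle is controlling $\eps B[S,g]$: crucially, $\P S = 0$ removes precisely the terms of $B$ containing $\rho[\widehat S], u[\widehat S], \theta[\widehat S]$, so that what survives pairs a $|\xi|/\la\xi\ra^2$ factor with $\|\P\widehat g\|_{L^2_v}$-type moments that match exactly the degenerate macroscopic dissipation $\tfrac{|\xi|^2}{\la\xi\ra^2}\|\P\widehat g\|^2_{L^2_v}$ from Proposition~\ref{prop:hypocoercivity}; without this cancellation, one would be forced to bound macroscopic moments of $\widehat S$ that are not controlled by the $(H^{s,*}_v)'$ norm. The explicit $\eps$ in the final bound comes from the $\eps$ in front of $B[S,g]$ together with the $\eps^2$ produced by the Young pairing on the standard inner-product piece.
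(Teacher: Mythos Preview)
Your proof is correct and follows essentially the same approach as the paper: test the Duhamel equation against $\widehat g_S$ in the modified inner product $\la\!\la\cdot,\cdot\ra\!\ra_{L^2_v}$, exploit $\P S=0$ to reduce both $\la \widehat S,\widehat g_S\ra_{L^2_v}$ to a microscopic pairing and $B[S,g_S]$ to the two surviving moment terms, bound those via duality and the $|\xi|/\la\xi\ra$ factor, absorb by Young's inequality, integrate in time, and take the $L^p_\xi$ norm. Your explicit remark that $\P\widehat g_S(t,0)=0$ in the torus case (so that Proposition~\ref{prop:hypocoercivity}(2) applies at every frequency) is a welcome clarification that the paper leaves implicit.
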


\begin{rem}
As in Remark~\ref{rem:prop:estimate_Ueps}, we observe that in the torus case $\Omega_x = \T^3$ one can replace the term $ \frac{|\xi|}{\la \xi \ra} \P \widehat g_S$ in above estimate by $\P \widehat g_S$.
\end{rem}

\begin{proof}
We first observe that $g_S$ satisfies the equation
\begin{equation}\label{eq:gS}
\partial_t  g_S = \frac{1}{\eps^2} ( L - \eps v \cdot \nabla_x )g_S +  S, \quad  g_{|t=0} = 0,
\end{equation}
thus, for all  for all $\xi \in \Z^3$ (if $\Omega_x = \T^3$) or all $ \xi \in \R^3$ (if $\Omega_x = \R^3)$,
\begin{equation}\label{eq:hatgS}
\partial_t \widehat g_S(\xi) = \Lambda^\eps (\xi) \widehat g_S(\xi) + \widehat S(\xi), \quad \widehat g(\xi)_{|t=0} = 0,
\end{equation}
that is, for all $t \ge 0$ ,
\begin{equation}\label{eq:hatgS_integrated}
\widehat g_S(t,\xi) =\int_0^t  \widehat U^\eps (t-\tau,\xi) \widehat S(\tau,\xi) \, \d \tau.
\end{equation}

We remark from \eqref{eq:new_inner_product} (if $\ell=0$) or \eqref{eq:new_inner_product_ell} (if $\ell >0$) and the fact that $\P S = 0$ that
$$
\begin{aligned}
\la\!\la \widehat S (\xi) ,  \widehat g_S (\xi) \ra\!\ra_{L^2_v(\la v \ra^{\ell})}
&= \la \widehat S (\xi) ,   \widehat g_S (\xi) \ra_{L^2_v} 
+ \delta_0 \la \P^\perp \widehat S (\xi) ,   \P^\perp \widehat g_S (\xi) \ra_{L^2_v (\la v \ra^\ell)} 
+ \eps  B [S , g_S ](\xi) \\
&= \la \widehat S (\xi) ,  \P^\perp \widehat g_S (\xi) \ra_{L^2_v} 
+ \delta_0 \la \P^\perp \widehat S (\xi) ,   \P^\perp \widehat g_S (\xi) \ra_{L^2_v (\la v \ra^\ell)} 
+ \eps  B [S , g_S ](\xi) .
\end{aligned}
$$
Using again that $\P S = 0$, so that $\rho[S] = u[S] = \theta[S]=0$, we have
$$
\begin{aligned}
B [ S, g_S](\xi)
= \frac{\delta_1 \mathrm{i} }{1+|\xi|^2}   \xi \theta[\widehat g_S(\xi)] \cdot M[ \P^\perp \widehat S (\xi)]
+  \frac{\delta_2\mathrm{i}}{1+|\xi|^2} (\xi \otimes u[\widehat g_S(\xi)] )^{\mathrm{sym}} : \Theta[\P^\perp \widehat S(\xi)] , 
\end{aligned}
$$
therefore observing that for any polynomial $p=p(v)$ there holds
$$
\left|\int_{\R^3} \widehat S(\xi) p(v) \sqrt{\mu}(v) \, \d v  \right|
\lesssim \| \widehat S(\xi) \|_{(H^{s,*}_v)'},
$$
we get
$$
|B [S , g_S ](\xi)|
\lesssim \|  \P^\perp \widehat S(\xi) \|_{(H^{s,*}_v)'} \frac{|\xi|}{\la \xi \ra^2}\| \P \widehat g_S(\xi) \|_{L^2_v}
\lesssim \|  \P^\perp \widehat S(\xi) \|_{(H^{s,*}_v)'} \frac{|\xi|}{\la \xi \ra}\| \P \widehat g_S(\xi) \|_{L^2_v}.
$$
Moreover
$$
\la \widehat S (\xi) ,  \P^\perp \widehat g_S (\xi) \ra_{L^2_v} 
\lesssim \| \widehat S(\xi) \|_{(H^{s,*}_v)'}  \| \P^\perp \widehat g_S (\xi) \|_{H^{s,*}_v},
$$
and
$$
\begin{aligned}
\la \widehat S (\xi) ,   \P^\perp \widehat g_S (\xi) \ra_{L^2_v (\la v \ra^\ell)} 
&= \la \la v \ra^\ell \widehat S (\xi) ,   \la v \ra^\ell \P^\perp \widehat g_S (\xi) \ra_{L^2_v} \\
&\lesssim \| \la v \ra^\ell \widehat S(\xi) \|_{(H^{s,*}_v)'}  \| \la v \ra^\ell\P^\perp \widehat g_S (\xi) \|_{H^{s,*}_v} ,
\end{aligned}
$$
Gathering previous estimates yields
\begin{equation}\label{eq:bound_S}
\begin{aligned}
\la\!\la \widehat S (\xi) ,  \widehat g_S (\xi) \ra\!\ra_{L^2_v (\la v \ra^{\ell})}
&\lesssim \| \la v \ra^{\ell} \widehat S(\xi) \|_{(H^{s,*}_v)'} \left( \| \P^\perp \widehat g_S (\xi)\|_{H^{s,*}_v(\la v \ra^{\ell})} + \eps \frac{|\xi|}{\la \xi \ra} \| \P \widehat g_S (\xi)\|_{L^2_v} \right).
\end{aligned}
\end{equation}

Using Proposition~\ref{prop:hypocoercivity} and arguing as in Proposition~\ref{prop:estimate_Ueps} we have, for all $t \ge 0$ and all $\xi \in \Omega'_\xi$,
\begin{equation}\label{eq:dt_hatgS}
\begin{aligned}
\frac{1}{2} \frac{\d}{\d t} \Nt \widehat g_S(\xi) \Nt_{L^2_v(\la v \ra^{\ell})}^2
&\le - \lambda_0   \left( \frac{1}{\eps^2} \| \P^\perp \widehat g_S (\xi) \|_{H^{s,*}_v(\la v \ra^{\ell})}^2 + \frac{|\xi|^2}{\la \xi \ra^2}\| \P \widehat g_S (\xi) \|_{L^2_v}^2 \right) \\
&\quad
+ C \| \la v \ra^{\ell} \widehat S(\xi) \|_{(H^{s,*}_v)'} \left( \| \P^\perp \widehat g_S (\xi)\|_{H^{s,*}_v(\la v \ra^{\ell})} + \eps \frac{|\xi|}{\la \xi \ra} \| \P \widehat g_S (\xi)\|_{L^2_v} \right)\\
&\le - \frac{\lambda_0}{2}  \left( \frac{1}{\eps^2} \| \P^\perp \widehat g_S (\xi) \|_{H^{s,*}_v(\la v \ra^{\ell})}^2 + \frac{|\xi|^2}{\la \xi \ra^2} \| \P \widehat g_S (\xi) \|_{L^2_v}^2 \right) \\
&\quad
+ C \eps^2 \| \la v \ra^{\ell} \widehat S(\xi) \|_{(H^{s,*}_v)'}^2 ,
\end{aligned}
\end{equation}
where we have used Young's inequality in last line, which implies
$$
\begin{aligned}
\| \widehat g_S(t,\xi) \|_{L^2_v(\la v \ra^{\ell})}^2
+\frac{1}{\eps^2}\int_0^t \| \P^\perp \widehat g_S(\tau,\xi) \|_{H^{s,*}_v(\la v \ra^{\ell})}^2 \, \d \tau
&+\int_0^t  \frac{|\xi|^2}{\la \xi \ra^2} \| \P \widehat g_S(\tau,\xi) \|_{L^2_v}^2 \, \d \tau \\
&\quad
\lesssim \eps^2 \int_0^t \|  \la v \ra^{\ell} \widehat S(\tau,\xi) \|_{(H^{s,*}_v)'}^2 \, \d \tau.
\end{aligned}
$$
Taking the supremum in time and then taking the square-root of previous estimate yields
$$
\begin{aligned}
\|  \widehat g_S(\xi) \|_{L^\infty_t L^2_v(\la v \ra^{\ell})}
+\frac{1}{\eps}\|  \P^\perp \widehat g_S(\xi) \|_{L^2_t H^{s,*}_v(\la v \ra^{\ell})}
+\left\|  \frac{|\xi|}{\la \xi \ra} \P \widehat g_S(\xi) \right\|_{L^2_t L^2_v}
\lesssim \eps \|  \la v \ra^{\ell} \widehat S(\xi) \|_{L^2_t (H^{s,*}_v)'},
\end{aligned}
$$
and we conclude by taking the $L^p_\xi$ norm.
\end{proof}

\subsection{Decay estimates: Hard potentials in the torus}\label{sec:linear:HP}

In this subsection we shall always assume $\gamma + 2s \ge 0$ and $\Omega_x = \T^3$, and we shall obtain decay estimates for the semigroup $U^\eps$ (see Proposition~\ref{prop:estimate_Ueps_decay_hard_torus}) as well as its integral in time against a source $\int_0^t U^\eps(t-\tau) S(\tau) \, \d \tau$ (see Proposition~\ref{prop:estimate_Ueps_regularization_hard_torus}). We recall that given any real number $\lambda \in \R$ we denote $\mathrm{e}_\lambda : t \mapsto e^{\lambda t}$.

\begin{prop}\label{prop:estimate_Ueps_decay_hard_torus}
Let $\ell \ge 0$. Let $\widehat f_0 \in L^1_\xi L^2_v(\la v \ra^{\ell}) $, then
\begin{multline*}
\| \mathrm{e}_\lambda \widehat U^\eps (\cdot) \widehat f_0 \|_{L^1_\xi L^\infty_t L^2_v (\la v \ra^{\ell})} 
+\frac{1}{\eps} \| \mathrm{e}_\lambda \P^\perp \widehat U^\eps (\cdot) \widehat f_0 \|_{L^1_\xi L^2_t H^{s,*}_v(\la v \ra^{\ell}) }
+ \| \mathrm{e}_\lambda  \P \widehat U^\eps (\cdot) \widehat f_0 \|_{L^1_\xi L^2_t L^2_v } \\
\lesssim \| \widehat f_0 \|_{L^1_\xi L^2_v(\la v \ra^{\ell}) } ,
\end{multline*}
for some $\lambda >0$ (depending on $\lambda_0$ of Proposition~\ref{prop:hypocoercivity}).
\end{prop}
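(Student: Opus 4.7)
The plan is to mimic the proof of Proposition~\ref{prop:estimate_Ueps} after multiplying $\widehat U^\eps(t,\xi)\widehat f_0(\xi)$ by $e^{\lambda t}$ and absorbing the resulting exponential growth using the hard potentials structure. First I would set $h(t,\xi):=e^{\lambda t}\widehat U^\eps(t,\xi)\widehat f_0(\xi)$ for a small $\lambda>0$ to be chosen, which satisfies
$$
\partial_t h(\xi) = \Lambda^\eps(\xi) h(\xi) + \lambda h(\xi),\qquad h(0,\xi) = \widehat f_0(\xi),
$$
and commutes with $\P$ and $\P^\perp$. (As in Proposition~\ref{prop:estimate_Ueps}, I use implicitly that $f_0$ is normalized by~\eqref{eq:normalization2}, a property preserved by $U^\eps$, so that part~(2) of Proposition~\ref{prop:hypocoercivity} is available for every $\xi\in\Z^3$.) Differentiating $\Nt h(\xi)\Nt_{L^2_v}^2$ and combining the hypocoercivity estimate from Proposition~\ref{prop:hypocoercivity}(2) with the $\xi,\eps$-uniform norm equivalence $\Nt\cdot\Nt_{L^2_v}\simeq\|\cdot\|_{L^2_v}$ then yields
$$
\frac{1}{2}\frac{\d}{\d t}\Nt h(\xi)\Nt_{L^2_v}^2 \le -\lambda_0\Bigl(\tfrac{1}{\eps^2}\|\P^\perp h(\xi)\|_{H^{s,*}_v}^2 + \|\P h(\xi)\|_{L^2_v}^2\Bigr) + C\lambda\,\|h(\xi)\|_{L^2_v}^2.
$$

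The crucial step is to exploit the hard potentials assumption $\gamma+2s\ge 0$. By the embedding $\|\langle v\rangle^{\gamma/2+s} f\|_{L^2_v}\lesssim\|f\|_{H^{s,*}_v}$ recalled in~\eqref{eq:def:Hs*v}, the condition $\gamma/2+s\ge 0$ gives $\|\P^\perp h\|_{L^2_v}\lesssim\|\P^\perp h\|_{H^{s,*}_v}$; combined with $\eps\in(0,1]$ this produces the full lower bound
$$
\|h(\xi)\|_{L^2_v}^2 \lesssim \tfrac{1}{\eps^2}\|\P^\perp h(\xi)\|_{H^{s,*}_v}^2 + \|\P h(\xi)\|_{L^2_v}^2.
$$
Choosing $\lambda>0$ sufficiently small in terms of $\lambda_0$ and the above implicit constants, I can absorb the term $C\lambda\|h(\xi)\|_{L^2_v}^2$ into half of the dissipation to obtain
$$
\frac{\d}{\d t}\Nt h(\xi)\Nt_{L^2_v}^2 \le -\lambda_0\Bigl(\tfrac{1}{\eps^2}\|\P^\perp h(\xi)\|_{H^{s,*}_v}^2 + \|\P h(\xi)\|_{L^2_v}^2\Bigr).
$$

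Finally I would integrate this differential inequality on $[0,t]$, return to the usual $\|\cdot\|_{L^2_v}$ norm via the norm equivalence, take the supremum in $t\ge 0$ on the first term, take a square root, and integrate against $\d\xi$. Since $h=\mathrm{e}_\lambda\widehat U^\eps(\cdot)\widehat f_0$, this produces the weighted bounds on $\|\mathrm{e}_\lambda\widehat U^\eps(\cdot)\widehat f_0\|_{L^1_\xi L^\infty_t L^2_v}$, $\|\mathrm{e}_\lambda\P^\perp\widehat U^\eps(\cdot)\widehat f_0\|_{L^1_\xi L^2_t H^{s,*}_v}$ and $\|\mathrm{e}_\lambda\P\widehat U^\eps(\cdot)\widehat f_0\|_{L^1_\xi L^2_t L^2_v}$ claimed in the statement. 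The main (and only non-routine) obstacle is verifying that the hard-potentials embedding really yields coercivity on the \emph{whole} $L^2_v$ norm with a rate uniform in $\eps\in(0,1]$, so that the spectral gap is preserved after the $\lambda$-shift; once this is established, the remainder of the argument is a verbatim repetition of the energy estimate in Proposition~\ref{prop:estimate_Ueps}.
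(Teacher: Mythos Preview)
Your proposal is correct and follows essentially the same approach as the paper: both arguments use the hypocoercivity estimate of Proposition~\ref{prop:hypocoercivity}(2) together with the hard-potentials embedding $\|\cdot\|_{L^2_v}\lesssim\|\cdot\|_{H^{s,*}_v}$ to extract a full $L^2_v$ dissipation uniform in $\eps$, and then an integrating factor produces the exponential weight. The only cosmetic difference is that you introduce the weighted function $h=e^{\lambda t}\widehat f$ first and absorb the resulting $+\lambda\|h\|^2$ term, whereas the paper first splits the dissipation into $-\lambda\Nt\widehat f\Nt^2 - \sigma(\cdots)$ and then multiplies by $e^{2\lambda t}$; the two computations are equivalent.
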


\begin{proof}
Let $f(t) = U^\eps(t) f_0$ for all $t \ge 0$ which satisfies \eqref{eq:Ueps_f0}, so that $\widehat f(t,\xi) = \widehat U^\eps(t,\xi) \widehat f_0(\xi)$ satisfies \eqref{eq:hatUeps_hatf0} for all $\xi \in \Z^3$. 
Using Proposition~\ref{prop:hypocoercivity} we have, for all $t \ge 0$ and some $\lambda_0>0$,
$$
\begin{aligned}
\frac{1}{2} \frac{\d}{\d t} \Nt \widehat f(\xi) \Nt_{L^2_v (\la v \ra^{\ell})}^2
&\le - \lambda_0 \left( \frac{1}{\eps^2} \| \P^\perp \widehat f (\xi) \|_{H^{s,*}_v(\la v \ra^{\ell})}^2 + \| \P \widehat f (\xi) \|_{L^2_v}^2 \right),
\end{aligned}
$$
which implies, since $\| \cdot \|_{H^{s,*}_v (\la v \ra^{\ell})} \ge \| \la v \ra^{\gamma/2+s} \cdot \|_{L^2_v (\la v \ra^{\ell}) } \ge  \| \cdot \|_{L^2_v(\la v \ra^{\ell}) }$ and the fact that $\Nt \widehat f(\xi) \Nt_{L^2_v(\la v \ra^{\ell})}$ is equivalent to $\| \widehat f(\xi) \|_{L^2_v(\la v \ra^{\ell}) }$  independently of $\xi$ and $\eps$, that
$$
\begin{aligned}
\frac{1}{2} \frac{\d}{\d t} \Nt \widehat f(\xi) \Nt_{L^2_v(\la v \ra^{\ell})}^2
\le - \lambda \Nt \widehat f (\xi) \Nt_{L^2_v(\la v \ra^{\ell})}^2- \sigma  \left( \frac{1}{\eps^2} \| \P^\perp \widehat f (\xi) \|_{H^{s,*}_v(\la v \ra^{\ell})}^2 + \| \P \widehat f (\xi) \|_{L^2_v}^2 \right),
\end{aligned}
$$
for some positive constants $\lambda , \sigma >0$ depending only on the implicit constants in Proposition~\ref{prop:hypocoercivity}--(1) and on $\lambda_0>0$ appearing in Proposition~\ref{prop:hypocoercivity}--(2).
We therefore deduce
$$
\begin{aligned}
\frac{\d}{\d t} \left\{ e^{2\lambda t} \Nt \widehat f(\xi) \Nt_{L^2_v(\la v \ra^{\ell})}^2 \right\}
\le - \sigma e^{2\lambda t}  \left( \frac{1}{\eps^2} \| \P^\perp \widehat f (\xi) \|_{H^{s,*}_v(\la v \ra^{\ell})}^2 + \| \P \widehat f (\xi) \|_{L^2_v}^2 \right),
\end{aligned}
$$
which implies, for all $t \ge 0$,
$$
\begin{aligned}
e^{2\lambda t}\| \widehat f(t,\xi) \|_{L^2_v(\la v \ra^{\ell})}^2
+\frac{1}{\eps^2}\int_0^t e^{2\lambda s}\| \P^\perp \widehat f(\tau,\xi) \|_{H^{s,*}_v(\la v \ra^{\ell})}^2 \, \d \tau
+\int_0^t e^{2\lambda s}  \| \P \widehat f(\tau,\xi) \|_{L^2_v}^2 \, \d \tau \\
\lesssim \| \widehat f_0 (\xi)\|_{L^2_v(\la v \ra^{\ell})}^2.
\end{aligned}
$$
where we have used again that $\Nt \widehat f(\xi) \Nt_{L^2_v (\la v \ra^{\ell}) }$ is equivalent to $\| \widehat f(\xi) \|_{L^2_v (\la v \ra^{\ell}) }$ independently of $\xi$ and $\eps$.
Taking the supremum in time and then taking the square-root of previous estimate yields
$$
\begin{aligned}
\| \mathrm{e}_\lambda\widehat f(\xi) \|_{L^\infty_t L^2_v(\la v \ra^{\ell})}
+\frac{1}{\eps}\| \mathrm{e}_\lambda \P^\perp \widehat f(\xi) \|_{L^2_t H^{s,*}_v(\la v \ra^{\ell})}
+\| \mathrm{e}_\lambda \P \widehat f(\xi) \|_{L^2_t L^2_v}
\lesssim \| \widehat f_0 (\xi) \|_{L^2_v(\la v \ra^{\ell})},
\end{aligned}
$$
and we conclude by taking the $L^1_\xi$ norm.
\end{proof}

\begin{prop}\label{prop:estimate_Ueps_regularization_hard_torus}
Let $\ell \ge 0$. Let $\lambda>0$ be given in Proposition~\ref{prop:estimate_Ueps_decay_hard_torus}.
Let $S=S(t,x,v)$ verify $\P S = 0$ and $\mathrm{e}_{\lambda} \la v \ra^{\ell}  \widehat S \in L^1_\xi L^2_t (H^{s,*}_v)'$, and denote
$$
g_S(t) = \int_0^t  U^\eps (t-\tau) S(\tau) \, \d \tau.
$$
Then
$$
\begin{aligned}
\left\| \mathrm{e}_{\lambda} \widehat g_S \right\|_{L^1_\xi L^\infty_t L^2_v(\la v \ra^{\ell})} 
&+\frac{1}{\eps}\| \mathrm{e}_{\lambda} \P^\perp \widehat g_S \|_{L^1_\xi L^2_t H^{s,*}_v(\la v \ra^{\ell})} 
+\left\|\mathrm{e}_{\lambda}  \P \widehat g_S  \right\|_{L^1_\xi L^2_t L^2_v} 
\lesssim \eps \| \mathrm{e}_{\lambda} \la v \ra^{\ell} \widehat S \|_{L^1_\xi L^2_t (H^{s,*}_v)'} .
\end{aligned}
$$
\end{prop}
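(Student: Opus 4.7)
The proof is a hybrid of the arguments used for Proposition~\ref{prop:estimate_Ueps_regularization} (which furnishes the source-term estimate) and Proposition~\ref{prop:estimate_Ueps_decay_hard_torus} (which furnishes the exponential decay via hard potentials coercivity on $\P^\perp$). So my plan is to follow the Duhamel-level computation of Proposition~\ref{prop:estimate_Ueps_regularization} and then upgrade it to an exponentially weighted estimate exactly as in Proposition~\ref{prop:estimate_Ueps_decay_hard_torus}.

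Concretely, I start from the equation $\partial_t \widehat g_S(\xi) = \Lambda^\eps(\xi)\widehat g_S(\xi) + \widehat S(\xi)$ with zero initial data, differentiate $\Nt\widehat g_S(\xi)\Nt_{L^2_v}^2$ in time, and use Proposition~\ref{prop:hypocoercivity}(2) (torus case) to bound the linear part by $-2\lambda_0(\tfrac{1}{\eps^2}\|\P^\perp\widehat g_S(\xi)\|_{H^{s,*}_v}^2 + \|\P\widehat g_S(\xi)\|_{L^2_v}^2)$. For the source contribution $\la\!\la \widehat S(\xi),\widehat g_S(\xi)\ra\!\ra_{L^2_v}$, since $\P S=0$ the same computation as in \eqref{eq:bound_S} yields
\[
|\la\!\la \widehat S(\xi),\widehat g_S(\xi)\ra\!\ra_{L^2_v}|
\lesssim \|\widehat S(\xi)\|_{(H^{s,*}_v)'}\bigl(\|\P^\perp\widehat g_S(\xi)\|_{H^{s,*}_v} + \eps\|\P\widehat g_S(\xi)\|_{L^2_v}\bigr),
\]
where I have used $|\xi|/\la\xi\ra\le 1$ to absorb that factor (in the torus we do not need it). Young's inequality then lets me dominate this by $\tfrac{\lambda_0}{2}(\tfrac{1}{\eps^2}\|\P^\perp\widehat g_S(\xi)\|_{H^{s,*}_v}^2 + \|\P\widehat g_S(\xi)\|_{L^2_v}^2) + C\eps^2 \|\widehat S(\xi)\|_{(H^{s,*}_v)'}^2$.

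Next I exploit the hard potentials assumption $\gamma+2s\ge 0$ to peel off exponential decay. Since $\|\cdot\|_{H^{s,*}_v}\ge \|\la v\ra^{\gamma/2+s}\cdot\|_{L^2_v}\gtrsim \|\cdot\|_{L^2_v}$ and $\eps\le 1$, I get $\tfrac{1}{\eps^2}\|\P^\perp\widehat g_S(\xi)\|_{H^{s,*}_v}^2 \gtrsim \|\P^\perp\widehat g_S(\xi)\|_{L^2_v}^2$, so together with the $\|\P\widehat g_S(\xi)\|_{L^2_v}^2$ term and the equivalence $\Nt\cdot\Nt_{L^2_v}\simeq \|\cdot\|_{L^2_v}$ from Proposition~\ref{prop:hypocoercivity}(1), there exist $\lambda,\sigma>0$ (independent of $\xi$ and $\eps$, and with $\lambda$ the same constant as in Proposition~\ref{prop:estimate_Ueps_decay_hard_torus}) such that
\[
\tfrac{1}{2}\tfrac{\d}{\d t}\Nt\widehat g_S(\xi)\Nt_{L^2_v}^2 \le -2\lambda\,\Nt\widehat g_S(\xi)\Nt_{L^2_v}^2 - \sigma\bigl(\tfrac{1}{\eps^2}\|\P^\perp\widehat g_S(\xi)\|_{H^{s,*}_v}^2 + \|\P\widehat g_S(\xi)\|_{L^2_v}^2\bigr) + C\eps^2\|\widehat S(\xi)\|_{(H^{s,*}_v)'}^2.
\]
Multiplying by $e^{4\lambda t}$ (a bit more than the target exponent, to keep the extra negative $-2\lambda$ term) and integrating, I obtain a pointwise-in-$\xi$ bound on $e^{2\lambda t}\|\widehat g_S(t,\xi)\|_{L^2_v}^2$ together with the time-integrals $\tfrac{1}{\eps^2}\int_0^t e^{2\lambda s}\|\P^\perp\widehat g_S(s,\xi)\|_{H^{s,*}_v}^2\,\d s$ and $\int_0^t e^{2\lambda s}\|\P\widehat g_S(s,\xi)\|_{L^2_v}^2\,\d s$, each controlled by $\eps^2 \int_0^\infty e^{2\lambda s}\|\widehat S(s,\xi)\|_{(H^{s,*}_v)'}^2\,\d s$.

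Finally I take the $\sup$ in $t$ on the first term, square roots, and the $L^1_\xi$ norm to conclude. The main (mild) obstacle is the dual pairing bookkeeping for $\la\!\la \widehat S,\widehat g_S\ra\!\ra_{L^2_v}$: one must note that $\P S=0$ eliminates all terms of $B[S,g_S]$ that would produce uncontrolled polynomial factors, leaving only pairings against $\P^\perp\widehat S$ that are estimated by $(H^{s,*}_v)'$, and that $\eps\le 1$ combined with the hard-potentials gain $\|\cdot\|_{H^{s,*}_v}\gtrsim \|\cdot\|_{L^2_v}$ is what allows the absorption of the $-2\lambda\Nt\cdot\Nt^2$ term from both microscopic and macroscopic parts simultaneously.
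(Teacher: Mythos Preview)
Your proposal is correct and follows essentially the same route as the paper: start from the energy identity for $\Nt\widehat g_S(\xi)\Nt_{L^2_v}^2$, use Proposition~\ref{prop:hypocoercivity}(2) for the linear part and the computation \eqref{eq:bound_S} (enabled by $\P S=0$) plus Young's inequality for the source, then invoke the hard-potentials gain $\|\cdot\|_{H^{s,*}_v}\gtrsim\|\cdot\|_{L^2_v}$ to extract a $-\lambda\Nt\widehat g_S\Nt^2$ term, multiply by the exponential weight, integrate, take the square root and the $L^1_\xi$ norm. One cosmetic point: your factors of $2$ are slightly off --- Proposition~\ref{prop:hypocoercivity}(2) gives $-\lambda_0$ (not $-2\lambda_0$), so the coefficient of $-\Nt\widehat g_S\Nt^2$ in the $\tfrac12\tfrac{\d}{\d t}$ inequality is $\lambda$ (the constant from Proposition~\ref{prop:estimate_Ueps_decay_hard_torus}), and the correct multiplier is $e^{2\lambda t}$ rather than $e^{4\lambda t}$; otherwise the right-hand side would require $\mathrm{e}_{2\lambda}\widehat S\in L^1_\xi L^2_t(H^{s,*}_v)'$, which is stronger than the hypothesis.
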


\begin{proof}
Recall that $g_S$ satisfies equation~\eqref{eq:gS} and $\widehat g$ verifies~\eqref{eq:hatgS} for all $\xi \in \Z^3$ as well as \eqref{eq:hatgS_integrated}.
Thanks to \eqref{eq:dt_hatgS} and using that  $\| \cdot \|_{H^{s,*}_v (\la v \ra^{\ell})} \ge \| \la v \ra^{\gamma/2+s} \cdot \|_{L^2_v (\la v \ra^{\ell})} \ge  \| \cdot \|_{L^2_v (\la v \ra^{\ell})}$ as in the proof of Proposition~\ref{prop:estimate_Ueps_decay_hard_torus}, we get for all $t \ge 0$
$$
\begin{aligned}
\frac{1}{2} \frac{\d}{\d t} \Nt \widehat g_S(\xi) \Nt_{L^2_v(\la v \ra^{\ell})}^2
&\le - \lambda \| \widehat g_S (\xi) \|_{L^2_v(\la v \ra^{\ell})}^2- \sigma \left( \frac{1}{\eps^2} \| \P^\perp \widehat g_S (\xi) \|_{H^{s,*}_v(\la v \ra^{\ell})}^2 +  \| \P \widehat g_S (\xi) \|_{L^2_v}^2 \right) \\
&\quad
+ C \eps^2 \| \la v \ra^{\ell} \widehat S(\xi) \|_{(H^{s,*}_v)'}^2,
\end{aligned}
$$
for some constants $\lambda,\sigma,C>0$. We therefore deduce
$$
\begin{aligned}
\frac{\d}{\d t} \left\{ e^{2\lambda t} \Nt \widehat g_S(\xi) \Nt_{L^2_v(\la v \ra^{\ell})}^2 \right\}
&\le - \sigma e^{2\lambda t}  \left( \frac{1}{\eps^2} \| \P^\perp \widehat g_S (\xi) \|_{H^{s,*}_v(\la v \ra^{\ell})}^2 +\| \P \widehat g_S (\xi) \|_{L^2_v}^2 \right)\\
&\quad
+ C \eps^2 e^{2\lambda t}\| \la v \ra^{\ell} \widehat S(\xi) \|_{(H^{s,*}_v)'}^2,
\end{aligned}
$$
which implies, for all $t \ge 0$,
$$
\begin{aligned}
e^{2\lambda t}\| \widehat g_S(t,\xi) \|_{L^2_v(\la v \ra^{\ell})}^2
&+\frac{1}{\eps^2}\int_0^t e^{2\lambda s}\| \P^\perp \widehat g_S(\tau,\xi) \|_{H^{s,*}_v(\la v \ra^{\ell})}^2 \, \d \tau \\
&+\int_0^t e^{2\lambda s} \| \P \widehat g_S(\tau,\xi) \|_{L^2_v}^2 \, \d \tau
\lesssim \eps^2 \int_0^t e^{2\lambda s}\| \la v \ra^{\ell} \widehat S(\tau,\xi) \|_{(H^{s,*}_v)'}^2 \, \d \tau.
\end{aligned}
$$
Taking the supremum in time and then taking the square-root of previous estimate yields
$$
\begin{aligned}
\| \mathrm{e}_{\lambda} \widehat g_S(\xi) \|_{L^\infty_t L^2_v(\la v \ra^{\ell})}
+\frac{1}{\eps}\| \mathrm{e}_{\lambda} \P^\perp \widehat g_S(\xi) \|_{L^2_t H^{s,*}_v(\la v \ra^{\ell})}
+\left\| \mathrm{e}_{\lambda}  \P \widehat g_S(\xi) \right\|_{L^2_t L^2_v}
\lesssim \eps \| \mathrm{e}_{\lambda}  \la v \ra^{\ell} \widehat S(\xi) \|_{L^2_t (H^{s,*}_v)'},
\end{aligned}
$$
and we conclude by taking the $L^1_\xi$ norm.
\end{proof}

\subsection{Decay estimates: Soft potentials in the torus}\label{sec:linear:SP}

In this subsection we shall always assume $\gamma + 2s < 0$ and $\Omega_x = \T^3$, and we shall obtain decay estimates for the semigroup $U^\eps$ (see Proposition~\ref{prop:estimate_Ueps_decay_soft_torus}) as well as its integral in time against a source $\int_0^t U^\eps(t-\tau) S(\tau) \, \d \tau$ (see Proposition~\ref{prop:estimate_Ueps_regularization_soft_torus}). We recall that given any real number $\omega \in \R$ we denote $\mathrm{p}_\omega : t \mapsto (1+t)^{\omega}$.

\begin{prop}\label{prop:estimate_Ueps_decay_soft_torus}
Let $\ell > 0$ and $\widehat f_0 \in L^1_\xi L^2_v(\la v \ra^{\ell})$, then for any $0< \omega < \frac{\ell}{|\gamma+2s|}$ we have
$$
\begin{aligned}
\| \mathrm{p}_\omega \widehat U^\eps (\cdot) \widehat f_0 \|_{L^1_\xi L^\infty_t L^2_v} 
+\frac{1}{\eps} \| \mathrm{p}_\omega \P^\perp \widehat U^\eps (\cdot) \widehat f_0 \|_{L^1_\xi L^2_t H^{s,*}_v}
+ \| \mathrm{p}_\omega \P (\widehat U^\eps (\cdot) \widehat f_0) \|_{L^1_\xi L^2_t L^2_v} \\
\lesssim \| \widehat f_0 \|_{L^1_\xi L^2_v} 
+ \| \widehat U^\eps (\cdot) \widehat f_0 \|_{L^1_\xi L^\infty_t L^2_v(\la v \ra^\ell)} .
\end{aligned}
$$
\end{prop}

\begin{proof}
Arguing as in the proof of Proposition~\ref{prop:estimate_Ueps_decay_hard_torus}, denoting $f(t) = U^\eps(t) f_0$ and using that $\| \cdot \|_{H^{s,*}} \ge \| \la v \ra^{\gamma/2 + s} \cdot \|_{L^2_v}$, we obtain
\begin{equation}\label{eq:dt_hatf_soft}
\begin{aligned}
\frac{1}{2} \frac{\d}{\d t} \Nt \widehat f(\xi) \Nt_{L^2_v}^2
\le - \lambda \| \la v \ra^{\gamma/2 +s} \widehat f (\xi) \|_{L^2_v}^2- \sigma  \left( \frac{1}{\eps^2} \| \P^\perp \widehat f (\xi) \|_{H^{s,*}_v}^2 + \| \P \widehat f (\xi) \|_{L^2_v}^2 \right),
\end{aligned}
\end{equation}
for some positive constants $\lambda , \sigma >0$.

We now observe the following interpolation inequality: for any $R>0$ there holds
\begin{equation}\label{eq:interpolation}
\Nt \widehat f(\xi) \Nt_{L^2_v}^2 \lesssim \la R \ra^{|\gamma+2s|} \| \la v \ra^{\gamma/2+s} \widehat f(\xi) \|_{L^2_v}^2 + \la R \ra^{-2 \ell} \| \widehat f(\xi) \|_{L^2_v(\la v \ra^\ell)}^2.
\end{equation}
Therefore coming back to \eqref{eq:dt_hatf_soft} and choosing $\la R \ra = [(\lambda/\omega) (1+t) ]^{1/|\gamma+2s|}$ yields
$$
\begin{aligned}
\frac{1}{2} \frac{\d}{\d t} \Nt \widehat f(\xi) \Nt_{L^2_v}^2
&\le - \omega (1+t)^{-1}\Nt \widehat f(\xi) \Nt_{L^2_v}^2 - \sigma \left( \frac{1}{\eps^2} \| \P^\perp \widehat f (\xi) \|_{H^{s,*}_v}^2 + \| \P \widehat f (\xi) \|_{L^2_v}^2 \right) \\
&\quad
+ C (1+t)^{-1 - \frac{2 \ell}{|\gamma+2s|}} \| \widehat f (\xi) \|_{L^2_v(\la v \ra^\ell)}^2,
\end{aligned}
$$
for some constant $C>0$ (independent of $\xi$ and $\eps)$. Multiplying both sides by $(1+t)^{2\omega}$ gives
$$
\begin{aligned}
\frac{1}{2} \frac{\d}{\d t} \left\{ (1+t)^{2\omega}\Nt \widehat f(\xi) \Nt_{L^2_v}^2 \right\}
&\le  - \sigma (1+t)^{2\omega} \left( \frac{1}{\eps^2} \| \P^\perp \widehat f (\xi) \|_{H^{s,*}_v}^2 + \| \P \widehat f (\xi) \|_{L^2_v}^2 \right) \\
&\quad
+ C (1+t)^{2\omega-1 - \frac{2 \ell}{|\gamma+2s|}} \| \widehat f (\xi) \|_{L^2_v(\la v \ra^\ell)}^2.
\end{aligned}
$$
Integrating last estimate in time gives, for all $t \ge 0$,
\begin{multline*}
(1+t)^{2\omega} \| \widehat f (t,\xi) \|_{L^2_v}^2 
+ \frac{1}{\eps^2}\int_0^t (1+\tau)^{2\omega} \| \P^\perp \widehat f (\tau,\xi) \|_{H^{s,*}_v}^2 \, \d \tau
+ \int_0^t (1+\tau)^{2\omega} \| \P \widehat f (\tau,\xi) \|_{L^2_v}^2 \, \d \tau
\\ 
\lesssim \| \widehat f_0 (\xi) \|_{L^2_v}^2 
+ \sup_{\tau \in [0,t]} \| \widehat f (\tau,\xi) \|_{L^2_v (\la v \ra^\ell)}^2 \int_0^t (1+\tau)^{2\omega-1 - \frac{2 \ell}{|\gamma+2s|}} \, \d \tau,
\end{multline*}
where we have used again that $\Nt \cdot \Nt_{L^2_v}$ is equivalent to $\| \cdot \|_{L^2_v}$ independently of $\xi$ and $\eps$.
Observing that $(1+t)^{2\omega-1 - \frac{2 \ell}{|\gamma+2s|}} $ is integrable since $0 < \omega < \frac{\ell}{|\gamma+2s|}$, we can take the supremum in time in last estimate and then its square-root to obtain
$$
\| \mathrm{p}_\omega \widehat f (\xi) \|_{L^\infty_t L^2_v} 
+\frac{1}{\eps}  \| \mathrm{p}_\omega \P^\perp \widehat f (\xi) \|_{L^2_t H^{s,*}_v} 
+  \| \mathrm{p}_\omega \P \widehat f (s,\xi) \|_{L^2_t L^2_v }
\lesssim \| \widehat f_0 (\xi) \|_{L^2_v} + \| \widehat f (\xi) \|_{L^\infty_t L^2_v (\la v \ra^\ell)},
$$
and we conclude the proof by taking the $L^1_\xi$ norm.
\end{proof}

\begin{prop}\label{prop:estimate_Ueps_regularization_soft_torus}
Let $S=S(t,x,v)$ verify $\P S = 0$ and $\mathrm{p}_{\omega} \widehat S  \in L^1_\xi L^2_t (H^{s,*}_v)'$ for some $0< \omega < \frac{\ell}{|\gamma+2s|}$ and $\ell >0$, and denote
$$
g_S(t) = \int_0^t  U^\eps (t-\tau) S(\tau) \, \d \tau.
$$
Assume that $\widehat g_S \in L^1_\xi L^\infty_t L^2_v (\la v \ra^\ell)$, then we have
$$
\begin{aligned}
\left\| \mathrm{p}_{\omega} \widehat g_S \right\|_{L^1_\xi L^\infty_t L^2_v} 
&+\frac{1}{\eps}\| \mathrm{p}_{\omega} \P^\perp \widehat g_S\|_{L^1_\xi L^2_t H^{s,*}_v} 
+\|\mathrm{p}_{\omega} \P \widehat g_S  \|_{L^1_\xi L^2_t L^2_v} \\
&\qquad
\lesssim \eps \| \mathrm{p}_{\omega} \widehat S \|_{L^1_\xi L^2_t (H^{s,*}_v)'} 
+ \| \widehat g_S  \|_{L^1_\xi L^\infty_t L^2_v (\la v \ra^\ell)}.
\end{aligned}
$$

\end{prop}

\begin{proof}
Arguing as in the proof of Proposition~\ref{prop:estimate_Ueps_regularization_hard_torus}, but using now that  $\| \cdot \|_{H^{s,*}(\la v \ra^\ell)} \ge \| \la v \ra^{\gamma/2 + s} \cdot \|_{L^2_v(\la v \ra^\ell)}$ as in Proposition~\ref{prop:estimate_Ueps_decay_soft_torus}, we have
\begin{equation}\label{eq:dt_gS_soft}
\begin{aligned}
\frac{1}{2} \frac{\d}{\d t} \Nt \widehat g_S(\xi) \Nt_{L^2_v}^2
&\le - \lambda \| \la v \ra^{\gamma/2 + s}\widehat g_S (\xi) \|_{L^2_v}^2- \sigma  \left( \frac{1}{\eps^2} \| \P^\perp \widehat g_S (\xi) \|_{H^{s,*}_v}^2 + \| \P \widehat g_S (\xi) \|_{L^2_v}^2 \right) \\
&\quad
+ C \eps^2 \| \widehat S(\xi) \|_{(H^{s,*}_v)'}^2.
\end{aligned}
\end{equation}
for some constants $\lambda,\sigma,C>0$.
Using the interpolation~\eqref{eq:interpolation} as in the proof of Proposition~\ref{prop:estimate_Ueps_decay_soft_torus}, we obtain
$$
\begin{aligned}
\frac{1}{2} \frac{\d}{\d t} \Nt \widehat g_S(\xi) \Nt_{L^2_v}^2
&\le - \omega (1+t)^{-1}\Nt \widehat g_S(\xi) \Nt_{L^2_v}^2 - \sigma \left( \frac{1}{\eps^2} \| \P^\perp \widehat g_S (\xi) \|_{H^{s,*}_v}^2 + \| \P \widehat g_S (\xi) \|_{L^2_v}^2 \right) \\
&\quad
+  C \eps^2 \| \widehat S(\xi) \|_{(H^{s,*}_v)'}^2
+ C (1+t)^{-1 - \frac{2 \ell}{|\gamma+2s|}} \| \widehat g_S (\xi) \|_{L^2_v(\la v \ra^\ell)}^2,
\end{aligned}
$$
for some constant $C>0$ (independent of $\xi$ and $\eps)$. We can then conclude exactly as in the proof of Proposition~\ref{prop:estimate_Ueps_decay_soft_torus}.
\end{proof}

\subsection{Decay estimates: Hard potentials in the whole space}\label{sec:linear:HP_wholespace}

In this subsection we shall always assume $\gamma + 2s \ge 0$ and $\Omega_x = \R^3$, and we shall obtain decay estimates for the semigroup $U^\eps$ (see Proposition~\ref{prop:estimate_Ueps_decay_hard_wholespace}) as well as its integral in time against a source $\int_0^t U^\eps(t-\tau) S(\tau) \, \d \tau$ (see Proposition~\ref{prop:estimate_Ueps_regularization_hard_wholespace}). We recall that given any real number $\omega \in \R$ we denote $\mathrm{p}_\omega : t \mapsto (1+t)^{\omega}$.

\begin{prop}\label{prop:estimate_Ueps_decay_hard_wholespace}
Let $\ell \ge 0$, $p \in (3/2,\infty]$ and $0<\vartheta< \frac{3}{2}(1-\frac{1}{p})$.
Let $\widehat f_0 \in L^1_\xi L^2_v (\la v \ra^{\ell})$, then
\begin{multline*}
\| \mathrm{p}_\vartheta \widehat U^\eps (\cdot) \widehat f_0 \|_{L^1_\xi L^\infty_t L^2_v (\la v \ra^{\ell})} 
+\frac{1}{\eps} \|\mathrm{p}_\vartheta \P^\perp \widehat U^\eps (\cdot) \widehat f_0 \|_{L^1_\xi L^2_t H^{s,*}_v (\la v \ra^{\ell})}
+ \left\| \mathrm{p}_\vartheta \frac{|\xi|}{\la \xi \ra} \P \widehat U^\eps (\cdot) \widehat f_0 \right\|_{L^1_\xi L^2_t L^2_v (\la v \ra^{\ell})} \\
\lesssim \| \widehat f_0 \|_{L^1_\xi L^2_v (\la v \ra^{\ell})} +  \| \widehat U^\eps (\cdot) \widehat f_0 \|_{L^p_\xi L^\infty_t L^2_v (\la v \ra^{\ell})} .
\end{multline*}
\end{prop}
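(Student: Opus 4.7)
The plan is to follow the scheme of Proposition~\ref{prop:estimate_Ueps_decay_hard_torus}, starting from the dissipation identity of Proposition~\ref{prop:hypocoercivity}(3) applied to $\widehat f(t,\xi) := \widehat U^\eps(t,\xi)\widehat f_0(\xi)$, namely
$$
\tfrac{1}{2}\tfrac{\d}{\d t}\Nt\widehat f(t,\xi)\Nt_{L^2_v}^2 \le -\lambda_0\Bigl(\tfrac{1}{\eps^2}\|\P^\perp\widehat f\|_{H^{s,*}_v}^2 + \tfrac{|\xi|^2}{\la\xi\ra^2}\|\P\widehat f\|_{L^2_v}^2\Bigr),
$$
but with one essential twist: contrary to the torus case, the hydrodynamic rate $|\xi|^2/\la\xi\ra^2$ vanishes as $\xi\to 0$ in the whole space, so no uniform spectral gap is available and I must split the frequencies at $|\xi|=1$.

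On the high-frequency region $\{|\xi|\ge 1\}$ we have $|\xi|^2/\la\xi\ra^2\ge 1/2$; combined with $\eps\le 1$ and the pointwise inequality $\|\cdot\|_{H^{s,*}_v}\ge\|\cdot\|_{L^2_v}$ (which uses $\gamma+2s\ge 0$), the full dissipation dominates $\sigma\Nt\widehat f\Nt_{L^2_v}^2$ uniformly in $\xi$ and $\eps$. Repeating verbatim the Gronwall-type argument of Proposition~\ref{prop:estimate_Ueps_decay_hard_torus} one absorbs the polynomial weight $\mathrm{p}_\vartheta$ into a small exponential $e^{\lambda t}$ and closes the high-frequency part of the three desired norms with right-hand side controlled by $\int_{|\xi|\ge 1}\|\widehat f_0(\xi)\|_{L^2_v}\,\d\xi\le\|\widehat f_0\|_{L^1_\xi L^2_v}$.

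On the low-frequency region $\{|\xi|\le 1\}$ we have $|\xi|^2/\la\xi\ra^2\sim|\xi|^2$, and using $1/\eps^2\ge 1\ge|\xi|^2$ together with $\|\cdot\|_{H^{s,*}_v}\ge\|\cdot\|_{L^2_v}$, one checks that a fraction of the dissipation controls $2c|\xi|^2\Nt\widehat f\Nt^2$ while leaving enough mass for the time-integrated norms, so that
$$
\tfrac{\d}{\d t}\Nt\widehat f\Nt_{L^2_v}^2 + 2c|\xi|^2\Nt\widehat f\Nt_{L^2_v}^2 + c'\Bigl(\tfrac{1}{\eps^2}\|\P^\perp\widehat f\|_{H^{s,*}_v}^2 + \tfrac{|\xi|^2}{\la\xi\ra^2}\|\P\widehat f\|_{L^2_v}^2\Bigr)\le 0.
$$
The integrating factor $e^{c|\xi|^2 t}$ then yields a weighted energy identity, and via the elementary bound $\sup_{t\ge 0}(1+t)^{2\vartheta}e^{-c|\xi|^2 t}\lesssim |\xi|^{-4\vartheta}$ on $\{|\xi|\le 1\}$, the polynomial weight converts into a singular factor, producing
$$
\sup_{t\ge 0}(1+t)^\vartheta\|\widehat f(t,\xi)\|_{L^2_v} + \Bigl(\int_0^\infty (1+t)^{2\vartheta}\bigl(\tfrac{1}{\eps^2}\|\P^\perp\widehat f\|_{H^{s,*}_v}^2+\tfrac{|\xi|^2}{\la\xi\ra^2}\|\P\widehat f\|_{L^2_v}^2\bigr)\d t\Bigr)^{1/2} \lesssim |\xi|^{-2\vartheta}\|\widehat f_0(\xi)\|_{L^2_v}.
$$

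The last step is to integrate in $\xi$ over $\{|\xi|\le 1\}$ and apply Hölder's inequality with exponents $p$ and $q=p/(p-1)$,
$$
\int_{|\xi|\le 1}|\xi|^{-2\vartheta}\|\widehat f_0(\xi)\|_{L^2_v}\,\d\xi \le \bigl\||\xi|^{-2\vartheta}\bigr\|_{L^q(|\xi|\le 1)}\|\widehat f_0\|_{L^p_\xi L^2_v},
$$
where the first factor is finite precisely when $2\vartheta q<3$, i.e.\ when $\vartheta<\frac{3}{2}(1-\frac{1}{p})$. Since trivially $\|\widehat f_0\|_{L^p_\xi L^2_v}\le\|\widehat U^\eps(\cdot)\widehat f_0\|_{L^p_\xi L^\infty_t L^2_v}$ (by taking $t=0$), one gets the term appearing on the announced right-hand side, and combining with the high-frequency contribution concludes. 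The main technical obstacle is the low-frequency bookkeeping: one must extract exactly the right share of the dissipation to build the integrating factor with the correct rate $c|\xi|^2$, and to recognize that the polynomial singularity $|\xi|^{-2\vartheta}$ generated by the weight $\mathrm{p}_\vartheta$ is precisely what dictates, via Hölder in three dimensions, the integrability threshold $\vartheta<\frac{3}{2}(1-\frac{1}{p})$ on the decay rate.
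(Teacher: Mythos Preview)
Your proof is correct, and the high-frequency part coincides with the paper's. At low frequencies, however, you take a genuinely different and somewhat more elementary route. The paper does \emph{not} use an exponential integrating factor on $\{|\xi|<1\}$. Instead it introduces an auxiliary exponent $r$ with $1+p'/3<r<1+1/(2\vartheta)$ and applies Young's inequality in the form
\[
1 \le \delta(1+t)\tfrac{|\xi|^2}{\la\xi\ra^2} + C_\delta(1+t)^{-\frac{1}{r-1}}|\xi|^{-\frac{2}{r-1}},
\]
so that multiplying the dissipation inequality by $(1+t)^{2\vartheta}$ directly produces a differential inequality with a source $(1+t)^{2\vartheta-1-1/(r-1)}|\xi|^{-2/(r-1)}\|\widehat f(\xi)\|_{L^2_v}^2$; the upper bound on $r$ makes this time-integrable, and the lower bound on $r$ is exactly what makes $\mathbf{1}_{|\xi|<1}|\xi|^{-1/(r-1)}\in L^{p'}_\xi$. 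This naturally leaves $\|\widehat f(\xi)\|_{L^\infty_t L^2_v}$ (rather than $\|\widehat f_0(\xi)\|_{L^2_v}$) on the right-hand side, which is why the statement carries the semigroup norm $\|\widehat U^\eps(\cdot)\widehat f_0\|_{L^p_\xi L^\infty_t L^2_v}$.

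Your approach --- extracting the rate $c|\xi|^2$, using the integrating factor $e^{c|\xi|^2 t}$, and then invoking $\sup_{t\ge 0}(1+t)^{2\vartheta}e^{-c|\xi|^2 t}\lesssim|\xi|^{-4\vartheta}$ --- is cleaner and avoids the auxiliary parameter $r$; it also yields the sharper right-hand side $\|\widehat f_0\|_{L^p_\xi L^2_v}$, which you then relax to match the statement via $t=0$. The paper's method, on the other hand, is more robust when a source term is present (as in Proposition~\ref{prop:estimate_Ueps_regularization_hard_wholespace}), since there one genuinely needs the semigroup norm of $g_S$ rather than initial data, and the Young-inequality splitting transfers verbatim.
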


\begin{proof}
Let $f(t) = U^\eps(t) f_0$ for all $t \ge 0$ which satisfies \eqref{eq:Ueps_f0}, so that $\widehat f(t,\xi) = \widehat U^\eps(t,\xi) \widehat f(\xi)$ satisfies \eqref{eq:hatUeps_hatf0} for all $\xi \in \R^3$. 
Using Proposition~\ref{prop:hypocoercivity} we have, for all $t \ge 0$ and some $\lambda_0>0$,
$$
\begin{aligned}
\frac{1}{2} \frac{\d}{\d t} \Nt \widehat f(\xi) \Nt_{L^2_v (\la v \ra^{\ell})}^2
&\le - \lambda_0 \left( \frac{1}{\eps^2} \| \P^\perp \widehat f (\xi) \|_{H^{s,*}_v(\la v \ra^{\ell})}^2 + \frac{|\xi|^2}{\la \xi \ra^2}\| \P \widehat f (\xi) \|_{L^2_v}^2 \right),
\end{aligned}
$$
and we already observe that, using  $\| \cdot \|_{H^{s,*}_v (\la v \ra^{\ell})} \ge \| \la v \ra^{\gamma/2+s} \cdot \|_{L^2_v (\la v \ra^{\ell})} \ge  \| \cdot \|_{L^2_v (\la v \ra^{\ell})}$ and $\eps \in (0,1]$,
$$
\frac{1}{\eps^2} \| \P^\perp \widehat f (\xi) \|_{H^{s,*}_v(\la v \ra^{\ell})}^2 + \frac{|\xi|^2}{\la \xi \ra^2}\| \P \widehat f (\xi) \|_{L^2_v}^2 \gtrsim  \frac{|\xi|^2}{\la \xi \ra^2} \Nt \widehat f (\xi) \Nt_{L^2_v (\la v \ra^{\ell})}^2,
$$
where we have used that  $\Nt \widehat f (\xi) \Nt_{L^2_v(\la v \ra^{\ell})}$ is equivalent to $\| \widehat f(\xi) \|_{L^2_v(\la v \ra^{\ell})}$ independently of $\xi$ and $\eps$. Therefore it follows
\begin{equation}\label{eq:dt_hatf_hard_wholespace}
\begin{aligned}
\frac{1}{2} \frac{\d}{\d t} \Nt \widehat f(\xi) \Nt_{L^2_v (\la v \ra^{\ell})}^2
&\le - 2\lambda \frac{|\xi|^2}{\la \xi \ra^2} \Nt \widehat f (\xi) \Nt_{L^2_v (\la v \ra^{\ell})}^2 - \sigma \left( \frac{1}{\eps^2} \| \P^\perp \widehat f (\xi) \|_{H^{s,*}_v(\la v \ra^{\ell})}^2 + \frac{|\xi|^2}{\la \xi \ra^2}\| \P \widehat f (\xi) \|_{L^2_v}^2 \right),
\end{aligned}
\end{equation}
for some constants $\lambda,\sigma >0$.
We now split our analysis into two cases: high frequencies $|\xi| \ge 1$ and low frequencies $|\xi|<1$.

For high frequencies $|\xi| \ge 1$ we remark that $\frac{|\xi|^2}{\la \xi \ra^2} \ge \frac{1}{2}$, hence we obtain 
$$
\begin{aligned}
\frac{1}{2} \frac{\d}{\d t} \mathbf{1}_{|\xi| \ge 1} \Nt \widehat f(\xi) \Nt_{L^2_v (\la v \ra^{\ell})}^2
&\le - \lambda \mathbf{1}_{|\xi| \ge 1} \Nt \widehat f(\xi) \Nt_{L^2_v (\la v \ra^{\ell})}^2 \\
&\quad
- \frac{\sigma}{2} \left( \frac{1}{\eps^2} \mathbf{1}_{|\xi| \ge 1}\| \P^\perp \widehat f (\xi) \|_{H^{s,*}_v(\la v \ra^{\ell})}^2 + \mathbf{1}_{|\xi| \ge 1}\| \P \widehat f (\xi) \|_{L^2_v}^2 \right).
\end{aligned}
$$
Arguing as in the proof of Proposition~\ref{prop:estimate_Ueps_decay_hard_torus} we hence deduce
\begin{equation}\label{eq:high_freq}
\begin{aligned}
\mathbf{1}_{|\xi| \ge 1}\| \mathrm{e}_\lambda\widehat f(\xi) \|_{L^\infty_t L^2_v(\la v \ra^{\ell})}
+\frac{1}{\eps} \mathbf{1}_{|\xi| \ge 1}\| \mathrm{e}_\lambda \P^\perp \widehat f(\xi) \|_{L^2_t H^{s,*}_v(\la v \ra^{\ell})}
+ \mathbf{1}_{|\xi| \ge 1} \| \mathrm{e}_\lambda \P \widehat f(\xi) \|_{L^2_t L^2_v} \\
\lesssim \mathbf{1}_{|\xi| \ge 1} \| \widehat f_0 (\xi) \|_{L^2_v(\la v \ra^{\ell})}.
\end{aligned}
\end{equation}

We now investigate the case of low frequencies $|\xi|<1$. We denote by $p'$ the conjugate exponent of $p$, that is $1/p+1/p' = 1$ with the convention $p'=1$ if $p=\infty$, and consider a real number $r$ verifying $1+p'/3 < r < 1+ 1/(2 \vartheta)$, which we observe is possible thanks to the conditions on $p$ and $\vartheta$.
Remarking that $|\xi|^2 \le 2 |\xi|^2/\la \xi \ra^2$ if $|\xi|<1$, by Young's inequality we get: for any $\delta>0$ there is $C_\delta>0$ such that, for all $|\xi|<1$ and $t \ge 0$, we have
\begin{equation}\label{eq:Young_wholespace}
1 \le \delta (1+t) \frac{|\xi|^2}{\la \xi \ra^2} + C_\delta (1+t)^{-\frac{1}{r-1}} |\xi|^{-\frac{2}{r-1}}.
\end{equation}
We therefore obtain, coming back to~\eqref{eq:dt_hatf_hard_wholespace} and choosing $\delta>0$ appropriately,
$$
\begin{aligned}
\frac{1}{2} \frac{\d}{\d t}  \mathbf{1}_{|\xi| < 1} \Nt \widehat f(\xi) \Nt_{L^2_v (\la v \ra^\ell)}^2
&\le - \sigma \left( \frac{1}{\eps^2} \mathbf{1}_{|\xi| < 1}\| \P^\perp \widehat f (\xi) \|_{H^{s,*}_v(\la v \ra^\ell)}^2 + \mathbf{1}_{|\xi| < 1} \frac{|\xi|^2}{\la \xi \ra^2}\| \P \widehat f (\xi) \|_{L^2_v}^2 \right) \\
&\quad
- \vartheta (1+t)^{-1} \mathbf{1}_{|\xi| < 1}\Nt \widehat f (\xi) \Nt_{L^2_v(\la v \ra^\ell)}^2\\
&\quad
+ C (1+t)^{-1-\frac{1}{r-1}} |\xi|^{-\frac{2}{r-1}} \mathbf{1}_{|\xi| < 1} \| \widehat f (\xi) \|_{L^2_v(\la v \ra^\ell)}^2.
\end{aligned}
$$
for some constant $C>0$. Multiplying both sides by $(1+t)^{2\vartheta}$ gives
$$
\begin{aligned}
&\frac{1}{2} \frac{\d}{\d t} \left\{ (1+t)^{2\vartheta} \mathbf{1}_{|\xi| < 1} \Nt \widehat f(\xi) \Nt_{L^2_v(\la v \ra^\ell) }^2 \right\} \\
&\qquad
\le - \sigma  (1+t)^{2\vartheta} \left( \frac{1}{\eps^2} \mathbf{1}_{|\xi| < 1}\| \P^\perp \widehat f (\xi) \|_{H^{s,*}_v(\la v \ra^\ell)}^2 + \mathbf{1}_{|\xi| < 1} \frac{|\xi|^2}{\la \xi \ra^2}\| \P \widehat f (\xi) \|_{L^2_v}^2 \right) \\
&\qquad\quad
+ C (1+t)^{2\vartheta-1-\frac{1}{r-1}} |\xi|^{-\frac{2}{r-1}} \mathbf{1}_{|\xi| < 1} \| \widehat f (\xi) \|_{L^2_v(\la v \ra^\ell)}^2.
\end{aligned}
$$
Integrating in time implies, for all $t\ge 0$,
$$
\begin{aligned}
(1+t)^{2\vartheta} \mathbf{1}_{|\xi| < 1} \| \widehat f(t,\xi) \|_{L^2_v(\la v \ra^\ell) }^2
&+ \frac{1}{\eps^2} \int_0^t (1+\tau)^{2 \vartheta} \mathbf{1}_{|\xi| < 1} \| \P^\perp \widehat f (\tau,\xi) \|_{H^{s,*}_v(\la v \ra^\ell)}^2 \, \d \tau \\
&+ \int_0^t (1+\tau)^{2 \vartheta} \mathbf{1}_{|\xi| < 1} \frac{|\xi|^2}{\la \xi \ra^2}\| \P \widehat f (\tau,\xi) \|_{L^2_v}^2 \, \d \tau \\
&\lesssim \mathbf{1}_{|\xi| < 1} \| \widehat f_0 (\xi) \|_{L^2_v(\la v \ra^\ell)}^2 + \mathbf{1}_{|\xi| < 1} |\xi|^{-\frac{2}{r-1}}  \| \widehat f(\xi) \|_{L^\infty_t L^2_v (\la v \ra^\ell)}^2,
\end{aligned}
$$
where we have used that $(1+t)^{2\vartheta-1-\frac{1}{r-1}}$ is integrable since $r<1 + 1/(2 \vartheta)$.
We now take the supremum in time and finally the square-root of the resulting estimate, which gives
\begin{equation}\label{eq:low_freq}
\begin{aligned}
\mathbf{1}_{|\xi| < 1}\| \mathrm{p}_\vartheta \widehat f(\xi) \|_{L^\infty_t L^2_v(\la v \ra^\ell)}
+\frac{1}{\eps} \mathbf{1}_{|\xi| < 1}\| \mathrm{p}_\vartheta \P^\perp \widehat f(\xi) \|_{L^2_t H^{s,*}_v(\la v \ra^\ell)}
+ \mathbf{1}_{|\xi| < 1} \left\| \mathrm{p}_\vartheta \frac{|\xi|}{\la \xi \ra}  \P \widehat f(\xi) \right\|_{L^2_t L^2_v} \\
\lesssim \mathbf{1}_{|\xi| < 1} \| \widehat f_0 (\xi) \|_{L^2_v(\la v \ra^\ell)} + \mathbf{1}_{|\xi| < 1} |\xi|^{-\frac{1}{r-1}}   \| \widehat f(\xi) \|_{L^\infty_t L^2_v (\la v \ra^\ell)} .
\end{aligned}
\end{equation}

Gathering the estimate for high frequencies~\eqref{eq:high_freq} together with the one for low frequencies~\eqref{eq:low_freq}, it follows
$$
\begin{aligned}
\| \mathrm{p}_\vartheta \widehat f(\xi) \|_{L^\infty_t L^2_v(\la v \ra^\ell)}
+\frac{1}{\eps} \| \mathrm{p}_\vartheta \P^\perp \widehat f(\xi) \|_{L^2_t H^{s,*}_v(\la v \ra^\ell)}
+ \left\| \mathrm{p}_\vartheta \frac{|\xi|}{\la \xi \ra}  \P \widehat f(\xi) \right\|_{L^2_t L^2_v}\\
\lesssim  \| \widehat f_0 (\xi) \|_{L^2_v(\la v \ra^\ell)} + \mathbf{1}_{|\xi| < 1} |\xi|^{-\frac{1}{r-1}}   \| \widehat f(\xi) \|_{L^\infty_t L^2_v (\la v \ra^\ell)} .
\end{aligned}
$$
Taking the $L^1_\xi$ norm above, we use H\"older's inequality to obtain
$$
\begin{aligned}
\int_{\R^3} \mathbf{1}_{|\xi| < 1} |\xi|^{-\frac{1}{r-1}}   \| \widehat f(\xi) \|_{L^\infty_t L^2_v(\la v \ra^\ell) } \, \d \xi 
&\lesssim \left( \int_{\R^3} \mathbf{1}_{|\xi| < 1} |\xi|^{-\frac{p'}{r-1}} \, \d \xi \right)^{1/p'} \| \widehat f \|_{L^p_\xi L^\infty_t L^2_v(\la v \ra^\ell) } \\
&\lesssim \| \widehat f \|_{L^p_\xi L^\infty_t L^2_v(\la v \ra^\ell) },
\end{aligned}
$$
since $r > 1+p'/3$, which implies 
$$
\begin{aligned}
\| \mathrm{p}_\vartheta \widehat f \|_{L^1_\xi L^\infty_t L^2_v(\la v \ra^\ell)}
+\frac{1}{\eps} \| \mathrm{p}_\vartheta \P^\perp \widehat f \|_{L^1_\xi L^2_t H^{s,*}_v(\la v \ra^\ell)}
&+ \left\| \mathrm{p}_\vartheta \frac{|\xi|}{\la \xi \ra}  \P \widehat f \right\|_{L^1_\xi L^2_t L^2_v} \\
&\lesssim  \| \widehat f_0  \|_{L^1_\xi L^2_v(\la v \ra^\ell)} +  \| \widehat f \|_{L^p_\xi L^\infty_t L^2_v (\la v \ra^\ell)} ,
\end{aligned}
$$
and concludes the proof.
\end{proof}

\begin{prop}\label{prop:estimate_Ueps_regularization_hard_wholespace}
Let $\ell \ge 0$, $p \in (3/2,\infty]$ and $0<\vartheta< \frac{3}{2}(1-\frac{1}{p})$.
Let $S=S(t,x,v)$ verify $\P S = 0$ and $\mathrm{p}_{\vartheta}  \la v \ra^{\ell} \widehat S \in L^1_\xi L^2_t (H^{s,*}_v)'$, and denote
$$
g_S(t) = \int_0^t  U^\eps (t-\tau) S(\tau) \, \d \tau.
$$
Assume that $\widehat g_S \in L^p_\xi L^\infty_t L^2_v (\la v \ra^\ell)$, then
$$
\begin{aligned}
\left\| \mathrm{p}_{\vartheta} \widehat g_S \right\|_{L^1_\xi L^\infty_t L^2_v(\la v \ra^{\ell})} 
&+\frac{1}{\eps}\| \mathrm{p}_{\vartheta} \P^\perp \widehat g_S \|_{L^1_\xi L^2_t H^{s,*}_v(\la v \ra^{\ell})} 
+\left\| \mathrm{p}_{\vartheta} \frac{|\xi|}{\la \xi \ra} \P \widehat g_S  \right\|_{L^1_\xi L^2_t L^2_v} \\
&\lesssim \eps \| \mathrm{p}_{\vartheta} \la v \ra^{\ell} \widehat S \|_{L^1_\xi L^2_t (H^{s,*}_v)'} 
+ \left\| \widehat g_S \right\|_{L^p_\xi L^\infty_t L^2_v(\la v \ra^{\ell})} .
\end{aligned}
$$
\end{prop}

\begin{proof}
Recalling that $\widehat g_S$ satisfies \eqref{eq:hatgS}, we can argue as for obtaining \eqref{eq:dt_hatf_hard_wholespace} to get
$$
\begin{aligned}
\frac{1}{2} \frac{\d}{\d t} \Nt \widehat g_S(\xi) \Nt_{L^2_v(\la v \ra^{\ell})}^2
&\le -2 \lambda \frac{|\xi|^2}{\la \xi \ra^2} \Nt \widehat g_S(\xi) \Nt_{L^2_v(\la v \ra^{\ell})}^2 - \sigma \left( \frac{1}{\eps^2} \| \P^\perp \widehat g_S (\xi) \|_{H^{s,*}_v(\la v \ra^{\ell})}^2 + \frac{|\xi|^2}{\la \xi \ra^2} \| \P \widehat g_S (\xi) \|_{L^2_v}^2 \right) \\
&\quad
+ C \eps^2 \| \la v \ra^{\ell} \widehat S(\xi) \|_{(H^{s,*}_v)'}^2 ,
\end{aligned}
$$
for some constants $\lambda,\sigma,C>0$. By separating the cases of high and low frequencies, we can conclude exactly as in the proof of Proposition~\ref{prop:estimate_Ueps_decay_hard_wholespace}.
\end{proof}

\subsection{Decay estimates: Soft potentials in the whole space}\label{sec:linear:SP_wholespace}

In this subsection we shall always assume $\gamma + 2s < 0$ and $\Omega_x = \R^3$, and we shall obtain decay estimates for the semigroup $U^\eps$ (see Proposition~\ref{prop:estimate_Ueps_decay_soft_wholespace}) as well as its integral in time against a source $\int_0^t U^\eps(t-\tau) S(\tau) \, \d \tau$ (see Proposition~\ref{prop:estimate_Ueps_regularization_soft_wholespace}). We recall that given any real number $\omega \in \R$ we denote $\mathrm{p}_\omega : t \mapsto (1+t)^{\omega}$.

\begin{prop}\label{prop:estimate_Ueps_decay_soft_wholespace}
Let $p \in (3/2,\infty]$ and $0<\vartheta< \frac{3}{2}(1-\frac{1}{p})$. Let $f_0 \in \FF_x^{-1} ( L^1_\xi L^2_v (\la v \ra^\ell) \cap L^p_\xi L^2_v)$ with $\ell>  \vartheta |\gamma+2s|$, then we have
$$
\begin{aligned}
\| \mathrm{p}_\vartheta \widehat U^\eps (\cdot) \widehat f_0 \|_{L^1_\xi L^\infty_t L^2_v} 
+\frac{1}{\eps} \| \mathrm{p}_\vartheta \P^\perp \widehat U^\eps (\cdot) \widehat f_0 \|_{L^1_\xi L^2_t H^{s,*}_v}
+ \left\| \mathrm{p}_\vartheta \frac{|\xi|}{\la \xi \ra} \P (\widehat U^\eps (\cdot) \widehat f_0) \right\|_{L^1_\xi L^2_t L^2_v} \\
\lesssim \| \widehat f_0 \|_{L^1_\xi L^2_v} 
+\| \widehat U^\eps (\cdot) \widehat f_0 \|_{L^1_\xi L^\infty_t L^2_v(\la v \ra^\ell)} 
+ \| \widehat U^\eps (\cdot) \widehat f_0 \|_{L^p_\xi L^\infty_t L^2_v} .
\end{aligned}
$$
\end{prop}

\begin{proof}
Arguing as in the proof of Proposition~\ref{prop:estimate_Ueps_decay_hard_wholespace}, denoting $f(t) = U^\eps(t) f_0$ and using that $\| \cdot \|_{H^{s,*}(\la v \ra^\ell)} \ge \| \la v \ra^{\gamma/2 + s} \cdot \|_{L^2_v(\la v \ra^\ell)}$, we first obtain 
\begin{equation}\label{eq:dt_hatf_soft_bis}
\begin{aligned}
\frac{1}{2} \frac{\d}{\d t} \Nt \widehat f(\xi) \Nt_{L^2_v}^2
&\le - \lambda \| \la v \ra^{\gamma/2 +s} \P^\perp \widehat f (\xi) \|_{L^2_v}^2 - \lambda \frac{|\xi|^2}{\la \xi \ra^2} \| \P \widehat f (\xi) \|_{L^2_v}^2 \\
&\quad
- \sigma  \left( \frac{1}{\eps^2} \| \P^\perp \widehat f (\xi) \|_{H^{s,*}_v}^2 + \frac{|\xi|^2}{\la \xi \ra^2}\| \P \widehat f (\xi) \|_{L^2_v}^2 \right),
\end{aligned}
\end{equation}
for some positive constants $\lambda , \sigma >0$. We now split the analysis into high frequencies and low frequencies.

For high frequencies $|\xi| \ge 1$ we observe that $\frac{|\xi|^2}{\la \xi \ra^2} \ge \frac{1}{2}$, which yields 
$$
\begin{aligned}
\frac{1}{2} \frac{\d}{\d t} \mathbf{1}_{|\xi| \ge 1}\Nt \widehat f(\xi) \Nt_{L^2_v}^2
&\le - \lambda \mathbf{1}_{|\xi| \ge 1}\| \la v \ra^{\gamma/2 +s} \P^\perp \widehat f (\xi) \|_{L^2_v}^2 - \lambda  \| \P \widehat f (\xi) \|_{L^2_v}^2 \\
&\quad
- \sigma  \left( \frac{1}{\eps^2} \mathbf{1}_{|\xi| \ge 1} \| \P^\perp \widehat f (\xi) \|_{H^{s,*}_v}^2 +  \mathbf{1}_{|\xi| \ge 1}\| \P \widehat f (\xi) \|_{L^2_v}^2 \right) \\
&\le - \lambda \mathbf{1}_{|\xi| \ge 1}\| \la v \ra^{\gamma/2 +s}  \widehat f (\xi) \|_{L^2_v}^2  \\
&\quad
- \sigma  \left( \frac{1}{\eps^2} \mathbf{1}_{|\xi| \ge 1} \| \P^\perp \widehat f (\xi) \|_{H^{s,*}_v}^2 + \mathbf{1}_{|\xi| \ge 1}\| \P \widehat f (\xi) \|_{L^2_v}^2 \right),
\end{aligned}
$$
for some other constants $\lambda , \sigma >0$. Thanks to the interpolation inequality~\eqref{eq:interpolation} of the proof of Proposition~\ref{prop:estimate_Ueps_decay_soft_torus}, we hence deduce
$$
\begin{aligned}
\frac{1}{2} \frac{\d}{\d t}  \mathbf{1}_{|\xi| \ge 1}\Nt \widehat f(\xi) \Nt_{L^2_v }^2
&\le - \omega (1+t)^{-1}  \mathbf{1}_{|\xi| \ge 1} \Nt \widehat f(\xi) \Nt_{L^2_v}^2  \\
&\quad
- \sigma  \left( \frac{1}{\eps^2} \mathbf{1}_{|\xi| \ge 1} \| \P^\perp \widehat f (\xi) \|_{H^{s,*}_v }^2 +  \mathbf{1}_{|\xi| \ge 1} \| \P \widehat f (\xi) \|_{L^2_v}^2 \right) \\
&\quad
+ C (1+t)^{-1 - \frac{2 \ell}{|\gamma+2s|}} \| \widehat f(\xi) \|_{L^2_v (\la v \ra^{\ell})}^2,
\end{aligned}
$$
for any $ \vartheta < \omega < \frac{\ell}{|\gamma+2s|}$ and some constant $C>0$. With this inequality we can thus argue as in the proof of Proposition~\ref{prop:estimate_Ueps_decay_soft_torus}, which gives, recalling that $(1+t)^{2\omega-1 - \frac{2 \ell}{|\gamma+2s|}}$ is integrable since $0 < \omega < \frac{\ell}{|\gamma+2s|}$, 
\begin{equation}\label{eq:high_freq_soft}
\begin{aligned}
\mathbf{1}_{|\xi| \ge 1}\| \mathrm{p}_\omega \widehat f(\xi) \|_{L^\infty_t L^2_v }
+\frac{1}{\eps} \mathbf{1}_{|\xi| \ge 1}\| \mathrm{p}_\omega \P^\perp \widehat f(\xi) \|_{L^2_t H^{s,*}_v}
+ \mathbf{1}_{|\xi| \ge 1} \| \mathrm{p}_\omega \P \widehat f(\xi) \|_{L^2_t L^2_v} \\
\lesssim \mathbf{1}_{|\xi| \ge 1} \| \widehat f_0 (\xi) \|_{L^2_v}
+ \mathbf{1}_{|\xi| \ge 1} \| \widehat f (\xi) \|_{L^\infty_t L^2_v(\la v \ra^{\ell})}.
\end{aligned}
\end{equation}

We now turn our attention to the low frequencies case $|\xi|<1$ . First of all, from \eqref{eq:dt_hatf_soft_bis}, we use the interpolation inequality~\eqref{eq:interpolation} of the proof of Proposition~\ref{prop:estimate_Ueps_decay_soft_torus} to deduce
$$
\begin{aligned}
\frac{1}{2} \frac{\d}{\d t} \mathbf{1}_{|\xi| < 1}\Nt \widehat f(\xi) \Nt_{L^2_v}^2
&\le -\omega (1+t)^{-1}  \mathbf{1}_{|\xi| < 1} \Nt \P^\perp \widehat f(\xi) \Nt_{L^2_v}^2 - \lambda \mathbf{1}_{|\xi| < 1} \frac{|\xi|^2}{\la \xi \ra^2} \| \P \widehat f (\xi) \|_{L^2_v}^2 \\
&\quad
- \sigma  \left( \frac{1}{\eps^2} \mathbf{1}_{|\xi| < 1} \| \P^\perp \widehat f (\xi) \|_{H^{s,*}_v}^2 + \mathbf{1}_{|\xi| < 1} \frac{|\xi|^2}{\la \xi \ra^2}\| \P \widehat f (\xi) \|_{L^2_v}^2 \right) \\
&\quad+ C (1+t)^{-1 - \frac{2 \ell}{|\gamma+2s|}} \mathbf{1}_{|\xi| < 1}\| \P^\perp \widehat f(\xi) \|_{L^2_v (\la v \ra^{\ell})}^2,
\end{aligned}
$$
for any $ \vartheta < \omega < \frac{\ell}{|\gamma+2s|}$ and some constant $C>0$.
As in the proof of Proposition~\ref{prop:estimate_Ueps_decay_hard_wholespace}, we denote by $p'$ the conjugate exponent of $p$, and consider a real number $r$ verifying $1+p'/3 < r < 1+ 1/(2 \vartheta)$. Using inequality~\eqref{eq:Young_wholespace} we hence deduce
$$
\begin{aligned}
\frac{1}{2} \frac{\d}{\d t}  \mathbf{1}_{|\xi| < 1} \Nt \widehat f(\xi) \Nt_{L^2_v}^2
&\le 
- \vartheta (1+t)^{-1} \mathbf{1}_{|\xi| < 1}\Nt \widehat f (\xi) \Nt_{L^2_v}^2\\
&\quad
- \sigma \left( \frac{1}{\eps^2} \mathbf{1}_{|\xi| < 1}\| \P^\perp \widehat f (\xi) \|_{H^{s,*}_v}^2 + \mathbf{1}_{|\xi| < 1} \frac{|\xi|^2}{\la \xi \ra^2}\| \P \widehat f (\xi) \|_{L^2_v}^2 \right) \\
&\quad
+ C (1+t)^{-1-\frac{2\ell}{|\gamma+2s|}} \mathbf{1}_{|\xi| < 1}\| \P^\perp \widehat f (\xi) \|_{L^2_v (\la v \ra^\ell)}^2 \\
&\quad
+ C (1+t)^{-1-\frac{1}{r-1}} |\xi|^{-\frac{2}{r-1}} \mathbf{1}_{|\xi| < 1} \| \P \widehat f (\xi) \|_{L^2_v}^2,
\end{aligned}
$$
for some constant $C>0$. Multiplying both sides by $(1+t)^{2\vartheta}$ gives
$$
\begin{aligned}
\frac{1}{2} \frac{\d}{\d t}  \left\{ (1+t)^{2\vartheta} \mathbf{1}_{|\xi| < 1} \Nt \widehat f(\xi) \Nt_{L^2_v}^2 \right\}
&\le 
- \sigma (1+t)^{2\vartheta} \left( \frac{1}{\eps^2} \mathbf{1}_{|\xi| < 1}\| \P^\perp \widehat f (\xi) \|_{H^{s,*}_v}^2 + \mathbf{1}_{|\xi| < 1} \frac{|\xi|^2}{\la \xi \ra^2}\| \P \widehat f (\xi) \|_{L^2_v}^2 \right) \\
&\quad
+ C (1+t)^{2 \vartheta-1-\frac{2\ell}{|\gamma+2s|}} \mathbf{1}_{|\xi| < 1} \| \P^\perp \widehat f (\xi) \|_{L^2_v (\la v \ra^\ell)}^2 \\
&\quad
+ C (1+t)^{2 \vartheta-1-\frac{1}{r-1}} |\xi|^{-\frac{2}{r-1}} \mathbf{1}_{|\xi| < 1} \|\P \widehat f (\xi) \|_{L^2_v}^2.
\end{aligned}
$$
Integrating in time implies, for all $t\ge 0$,
$$
\begin{aligned}
(1+t)^{2\vartheta} \mathbf{1}_{|\xi| < 1} \| \widehat f(t,\xi) \|_{L^2_v}^2
&+ \frac{1}{\eps^2} \int_0^t (1+\tau)^{2 \vartheta} \mathbf{1}_{|\xi| < 1} \| \P^\perp \widehat f (\tau,\xi) \|_{H^{s,*}_v}^2 \, \d \tau \\
&+ \int_0^t (1+\tau)^{2 \vartheta} \mathbf{1}_{|\xi| < 1} \frac{|\xi|^2}{\la \xi \ra^2}\| \P \widehat f (\tau,\xi) \|_{L^2_v}^2 \, \d \tau \\
&\lesssim \mathbf{1}_{|\xi| < 1} \| \widehat f_0 (\xi) \|_{L^2_v}^2 
+ \mathbf{1}_{|\xi| < 1}\| \widehat f (\xi) \|_{L^\infty_t L^2_v (\la v \ra^\ell)}^2
+ \mathbf{1}_{|\xi| < 1} |\xi|^{-\frac{2}{r-1}}  \| \widehat f(\xi) \|_{L^\infty_t L^2_v }^2,
\end{aligned}
$$
where we have used that $(1+t)^{2 \vartheta-1-\frac{2\ell}{|\gamma+2s|}}$ and $(1+t)^{2\vartheta-1-\frac{1}{r-1}}$ are integrable since $0< \vartheta < \omega < \frac{\ell}{|\gamma + 2s|}$ and $r<1 + 1/(2 \vartheta)$, respectively.
We can now take the supremum in time and then the square-root of the resulting estimate, which gives
\begin{equation}\label{eq:low_freq_soft}
\begin{aligned}
\mathbf{1}_{|\xi| < 1}\| \mathrm{p}_\vartheta \widehat f(\xi) \|_{L^\infty_t L^2_v}
+\frac{1}{\eps} \mathbf{1}_{|\xi| < 1}\| \mathrm{p}_\vartheta \P^\perp \widehat f(\xi) \|_{L^2_t H^{s,*}_v}
+ \mathbf{1}_{|\xi| < 1} \left\| \mathrm{p}_\vartheta \frac{|\xi|}{\la \xi \ra}  \P \widehat f(\xi) \right\|_{L^2_t L^2_v} \\
\lesssim \mathbf{1}_{|\xi| < 1} \| \widehat f_0 (\xi) \|_{L^2_v} 
+ \mathbf{1}_{|\xi| < 1} \| \widehat f (\xi) \|_{L^\infty_t L^2_v(\la v \ra^{\ell})} 
+ \mathbf{1}_{|\xi| < 1} |\xi|^{-\frac{1}{r-1}}   \| \widehat f(\xi) \|_{L^\infty_t L^2_v} .
\end{aligned}
\end{equation}

Gathering the estimate for high frequencies~\eqref{eq:high_freq_soft} together with the one for low frequencies~\eqref{eq:low_freq_soft} and observing that $\vartheta < \omega$, it follows
$$
\begin{aligned}
\| \mathrm{p}_\vartheta \widehat f(\xi) \|_{L^\infty_t L^2_v(\la v \ra^{\ell})}
+\frac{1}{\eps} \| \mathrm{p}_\vartheta \P^\perp \widehat f(\xi) \|_{L^2_t H^{s,*}_v(\la v \ra^{\ell})}
+ \left\| \mathrm{p}_\vartheta \frac{|\xi|}{\la \xi \ra}  \P \widehat f(\xi) \right\|_{L^2_t L^2_v}\\
\lesssim  \| \widehat f_0 (\xi) \|_{L^2_v} 
+ \| \widehat f (\xi) \|_{L^\infty_t L^2_v(\la v \ra^{\ell})} 
+ \mathbf{1}_{|\xi| < 1} |\xi|^{-\frac{1}{r-1}}   \| \widehat f(\xi) \|_{L^\infty_t L^2_v} .
\end{aligned}
$$
Taking the $L^1_\xi$ norm above, we use H\"older's inequality to control the last term in the right-hand side as in the proof of Proposition~\ref{prop:estimate_Ueps_decay_hard_wholespace}, to obtain
$$
\begin{aligned}
\int_{\R^3} \mathbf{1}_{|\xi| < 1} |\xi|^{-\frac{1}{r-1}}   \| \widehat f(\xi) \|_{L^\infty_t L^2_v (\la v \ra^{\ell})} \, \d \xi 
&\lesssim \| \widehat f \|_{L^p_\xi L^\infty_t L^2_v (\la v \ra^{\ell})},
\end{aligned}
$$
since $r > 1+p'/3$, which implies 
$$
\begin{aligned}
\| \mathrm{p}_\vartheta \widehat f \|_{L^1_\xi L^\infty_t L^2_v(\la v \ra^{\ell})}
+\frac{1}{\eps} \| \mathrm{p}_\vartheta \P^\perp \widehat f \|_{L^1_\xi L^2_t H^{s,*}_v(\la v \ra^{\ell})}
+ \left\| \mathrm{p}_\vartheta \frac{|\xi|}{\la \xi \ra}  \P \widehat f \right\|_{L^1_\xi L^2_t L^2_v}\\
\lesssim  \| \widehat f_0  \|_{L^1_\xi L^2_v} 
+ \| \widehat f \|_{L^1_\xi L^\infty_t L^2_v(\la v \ra^{\ell})}
+  \| \widehat f \|_{L^p_\xi L^\infty_t L^2_v (\la v \ra^{\ell})} 
\end{aligned}
$$
and concludes the proof.
\end{proof}

\begin{prop}\label{prop:estimate_Ueps_regularization_soft_wholespace}
Let $p \in (3/2,\infty]$ and $0<\vartheta< \frac{3}{2}(1-\frac{1}{p})$. Let $S=S(t,x,v)$ verify $\P S = 0$ and $ \mathrm{p}_{\vartheta}\widehat S \in L^1_\xi L^2_t (H^{s,*}_v )'$, and denote
$$
g_S(t) = \int_0^t  U^\eps (t-\tau) S(\tau) \, \d \tau.
$$
Assume that $g_S \in \FF_x^{-1} ( L^1_\xi L^2_v (\la v \ra^\ell) \cap L^p_\xi L^2_v)$ with $\ell> \vartheta |\gamma+2s|$, then
$$
\begin{aligned}
\left\| \mathrm{p}_{\vartheta} \widehat g_S \right\|_{L^1_\xi L^\infty_t L^2_v} 
&+\frac{1}{\eps}\| \mathrm{p}_{\vartheta} \P^\perp \widehat g_S \|_{L^1_\xi L^2_t H^{s,*}_v} 
+\left\| \mathrm{p}_{\vartheta} \frac{|\xi|}{\la \xi \ra} \P \widehat g_S  \right\|_{L^1_\xi L^2_t L^2_v} \\
&\lesssim \eps \| \mathrm{p}_{\vartheta} \widehat S \|_{L^1_\xi L^2_t (H^{s,*}_v)'} 
+ \| \widehat g_S \|_{L^1_\xi L^\infty_t L^2_v(\la v \ra^{\ell})} 
+ \| \widehat g_S \|_{L^p_\xi L^\infty_t L^2_v} .
\end{aligned}
$$
\end{prop}

\begin{proof}
Recalling that $\widehat g_S$ satisfies \eqref{eq:hatgS}, we can argue as for obtaining \eqref{eq:dt_hatf_soft_bis} to get
$$
\begin{aligned}
\frac{1}{2} \frac{\d}{\d t} \Nt \widehat g_S(\xi) \Nt_{L^2_v}^2
&\le -\lambda \| \langle v \rangle^{\gamma/2+s} \P^\perp \widehat g_S(\xi) \|_{L^2_v}^2
-\lambda \frac{|\xi|^2}{\la \xi \ra^2} \| \P \widehat g_S(\xi) \|_{L^2_v}^2
\\
&\quad 
- \sigma \left( \frac{1}{\eps^2} \| \P^\perp \widehat g_S (\xi) \|_{H^{s,*}_v}^2 + \frac{|\xi|^2}{\la \xi \ra^2} \| \P \widehat g_S (\xi) \|_{L^2_v}^2 \right) + C \eps^2 \| \widehat S(\xi) \|_{(H^{s,*}_v)'}^2 ,
\end{aligned}
$$
for some constants $\lambda,\sigma,C>0$. By separating the cases of high and low frequencies, we can conclude exactly as in the proof of Proposition~\ref{prop:estimate_Ueps_decay_soft_wholespace}.
\end{proof}

\section{Well-posedness and regularization for the rescaled Boltzmann equation}\label{section well-posedness Boltzmann}

Consider the equation \eqref{eq:feps_intro} that we rewrite here
$$
\left\{
\begin{aligned}
& \partial_t  f^\eps   = \frac{1}{\eps^2}(L-\eps v \cdot \nabla_x) f^\eps + \frac{1}{\eps} \Gamma(f^\eps , f^\eps) \\
& f^\eps_{t=0}=f^\eps_0.
\end{aligned}
\right.
$$
We shall consider mild solutions of \eqref{eq:feps_intro}, that is, we shall  prove the well-posedness of a solution $f^\eps$ to \eqref{eq:feps_intro} in Duhamel's form
\begin{equation}\label{eq:feps}
f^\eps (t) = U^\eps(t) f^\eps_0 + \frac{1}{\eps}\int_0^t U^\eps(t-\tau) \Gamma (f^\eps(\tau) , f^\eps(\tau)) \, \d \tau.
\end{equation}
Taking the Fourier transform in space of \eqref{eq:feps_intro}, we have
\begin{equation}\label{eq:feps_intro_fourier}
\left\{
\begin{aligned}
& \partial_t \widehat f^\eps (\xi)  = \Lambda^\eps(\xi) \widehat f^\eps(\xi) + \frac{1}{\eps} \widehat{\Gamma}(f^\eps , f^\eps)(\xi)  \\
& \widehat f^\eps (\xi)_{t=0}= \widehat f^\eps_0 (\xi),
\end{aligned}
\right.
\end{equation}
and by Duhamel's formula
\begin{equation}\label{eq:feps_fourier}
\widehat f^\eps(t,\xi) = \widehat U^\eps(t,\xi) \widehat f^\eps_0(\xi) + \frac{1}{\eps}\int_0^t \widehat U^\eps(t-\tau,\xi) \widehat \Gamma ( f^\eps(\tau) , f^\eps(\tau))(\xi) \, \d \tau.
\end{equation}

\subsection{Nonlinear estimates}

We start by recalling some well-known trilinear estimates on the collision operator $\Gamma$ established in \cite{GS,AMUXY,AMUXY5}.
We start with estimates without velocity weight. 
From \cite{GS,AMUXY5}, for the hard potentials case $\gamma+2s \ge 0$ there holds
\begin{equation}\label{eq:nonlinear_L2v_hard}
\left|\la \Gamma(f,g) , h \ra_{L^2_v } \right| 
\lesssim  \| f \|_{L^2_v  } \| g \|_{H^{s,*}_v }  \|  h \|_{H^{s,*}_v}.
\end{equation}
Moreover from \cite{AMUXY}, for the soft potentials case $\gamma+2s<0$ one has
\begin{equation}\label{eq:nonlinear_L2v_soft}
\begin{aligned}
&\left| \la \Gamma(f,g) , h \ra_{L^2_v} \right| \\
&\qquad
\lesssim \left( \| \la v \ra^{\gamma/2+s} f \|_{L^2_v} \| g \|_{H^{s,*}_v } +  \| f \|_{H^{s,*}_v } \| \la v \ra^{\gamma/2+s} g \|_{L^2_v} \right) \|  h \|_{H^{s,*}_v } \\
&\qquad\quad
+ \min\left\{ \| \la v \ra^{\gamma/2+s} f \|_{L^2_v  } \| g \|_{L^2_v} , \|  f \|_{L^2_v  } \| \la v \ra^{\gamma/2+s} g \|_{L^2_v}  \right\} \|  h \|_{H^{s,*}_v}.
\end{aligned}
\end{equation}
From these estimates we already obtain
\begin{equation}\label{eq:nonlinear_Hs*v'}
\begin{aligned}
\| \Gamma(f,g) \|_{(H^{s,*}_v)'} 
&= \sup_{\| \phi \|_{H^{s,*}_v } \le 1} \la \Gamma(f,g), \phi \ra_{L^2_v} \\
&\lesssim  \| \la v \ra^{-(\gamma/2+s)_{-}} f \|_{L^2_v} \| g \|_{H^{s,*}_v } +  \| f \|_{H^{s,*}_v } \| \la v \ra^{-(\gamma/2+s)_{-}} g \|_{L^2_v} \\
&\quad
+ \min\left\{ \| \la v \ra^{-(\gamma/2+s)_{-}} f \|_{L^2_v  } \| g \|_{L^2_v} , \|  f \|_{L^2_v  } \| \la v \ra^{-(\gamma/2+s)_{-}} g \|_{L^2_v}  \right\},
\end{aligned}
\end{equation}
where we denote $a_- = -\min(-a,0) $, which holds for both hard and soft potentials.

For the soft potentials case, we shall also need estimates when adding velocity weight $\la v \ra^\ell$. From \eqref{eq:nonlinear_L2v_soft} together with the commutator estimate of \cite[Proposition~3.13]{AMUXY}, there holds
\begin{equation}\label{eq:nonlinear_L2v_soft_weighted}
\begin{aligned}
&\left|\la \Gamma(f,g) , h \ra_{L^2_v (\la v \ra^\ell)} \right| \\
&\qquad
\lesssim 
\left( \| \la v \ra^{\gamma/2+s} f \|_{L^2_v} \| g \|_{H^{s,*}_v(\la v \ra^\ell) } +  \| f \|_{H^{s,*}_v } \| \la v \ra^{\gamma/2+s} g \|_{L^2_v (\la v \ra^\ell)} \right) \|  h \|_{H^{s,*}_v (\la v \ra^\ell) } \\
&\qquad\quad
+ \min\left\{ \| \la v \ra^{\gamma/2+s} f \|_{L^2_v  } \| g \|_{L^2_v (\la v \ra^\ell)} , \|  f \|_{L^2_v  } \| \la v \ra^{\gamma/2+s} g \|_{L^2_v (\la v \ra^\ell)}  \right\} \|  h \|_{H^{s,*}_v (\la v \ra^\ell)}.
\end{aligned}
\end{equation}
Therefore we also deduce
\begin{equation}\label{eq:nonlinear_Hs*v'_soft_weighted}
\begin{aligned}
\| \la v \ra^{\ell} \Gamma(f,g) \|_{(H^{s,*}_v)'} 
&= \sup_{\| \phi \|_{H^{s,*}_v (\la v \ra^\ell)} \le 1} \la \Gamma(f,g), \phi \ra_{L^2_v(\la v \ra^\ell)} \\
&\lesssim  \| \la v \ra^{\gamma/2+s} f \|_{L^2_v} \| g \|_{H^{s,*}_v(\la v \ra^\ell) } +   \| f \|_{H^{s,*}_v } \| \la v \ra^{\gamma/2+s} g \|_{L^2_v (\la v \ra^\ell)}  \\
&\quad
+ \min\left\{ \| \la v \ra^{\gamma/2+s} f \|_{L^2_v  } \| g \|_{L^2_v (\la v \ra^\ell)} , \|  f \|_{L^2_v  } \| \la v \ra^{\gamma/2+s} g \|_{L^2_v (\la v \ra^\ell)}  \right\} ,
\end{aligned}
\end{equation}
for the soft potentials case.

Thanks to \eqref{eq:nonlinear_Hs*v'} we deduce our main nonlinear estimate without weight.

\begin{lem}\label{lem:nonlinear}
Let $p \in [1,\infty]$. For any smooth enough functions $f,g$ there holds
$$
\| \widehat \Gamma (f,g) \|_{L^p_\xi L^2_t (H^{s,*}_v)'} 
\lesssim \Gamma_1 + \Gamma_2 + \min\left\{ \Gamma_3 , \Gamma_4 \right\}
$$
where
$$
\begin{aligned}
\Gamma_1 
= \min\Big\{ 
&\| \la v \ra^{-(\gamma/2+s)_{-}}  \widehat f \|_{L^p_\xi L^\infty_t L^2_v} \| \widehat g \|_{ L^1_\xi L^2_t H^{s,*}_v} , \| \la v \ra^{-(\gamma/2+s)_{-}}  \widehat f \|_{L^p_\xi L^2_t L^2_v} \| \widehat g \|_{ L^1_\xi L^\infty_t H^{s,*}_v}  ,\\
&\| \la v \ra^{-(\gamma/2+s)_{-}}  \widehat f \|_{L^1_\xi L^\infty_t L^2_v} \| \widehat g \|_{ L^p_\xi L^2_t H^{s,*}_v} , \| \la v \ra^{-(\gamma/2+s)_{-}}  \widehat f \|_{L^1_\xi L^2_t L^2_v} \| \widehat g \|_{ L^p_\xi L^\infty_t H^{s,*}_v}
\Big\},
\end{aligned}
$$
$$
\begin{aligned}
\Gamma_2 = \min\Big\{ 
&\| \widehat f \|_{L^p_\xi L^\infty_t H^{s,*}_v} \|\la v \ra^{-(\gamma/2+s)_{-}}  \widehat g \|_{ L^1_\xi L^2_t L^2_v} , \|  \widehat f \|_{L^p_\xi L^2_t H^{s,*}_v} \| \la v \ra^{-(\gamma/2+s)_{-}} \widehat g \|_{ L^1_\xi L^\infty_t L^2_v} ,\\
& \| \widehat f \|_{L^1_\xi L^\infty_t H^{s,*}_v} \|\la v \ra^{-(\gamma/2+s)_{-}}  \widehat g \|_{ L^p_\xi L^2_t L^2_v} ,
\|  \widehat f \|_{L^1_\xi L^2_t H^{s,*}_v} \| \la v \ra^{-(\gamma/2+s)_{-}} \widehat g \|_{ L^p_\xi L^\infty_t L^2_v} 
\Big\},
\end{aligned}
$$
$$
\begin{aligned}
\Gamma_3 = \min\Big\{ 
&\| \la v \ra^{-(\gamma/2+s)_{-}}  \widehat f \|_{L^p_\xi L^\infty_t L^2_v} \| \widehat g \|_{ L^1_\xi L^2_t L^2_v} , \| \la v \ra^{-(\gamma/2+s)_{-}}  \widehat f \|_{L^p_\xi L^2_t L^2_v} \| \widehat g \|_{ L^1_\xi L^\infty_t L^2_v} , \\
&\| \la v \ra^{-(\gamma/2+s)_{-}}  \widehat f \|_{L^1_\xi L^\infty_t L^2_v} \| \widehat g \|_{L^p_\xi L^2_t L^2_v} , \| \la v \ra^{-(\gamma/2+s)_{-}}  \widehat f \|_{L^1_\xi L^2_t L^2_v} \| \widehat g \|_{L^p_\xi L^\infty_t L^2_v}
\Big\},
\end{aligned}
$$
and
$$
\begin{aligned}
\Gamma_4 = \min\Big\{ 
&  \| \widehat f \|_{L^p_\xi L^\infty_t L^2_v} \|\la v \ra^{-(\gamma/2+s)_{-}}  \widehat g \|_{ L^1_\xi L^2_t L^2_v} , 
\|  \widehat f \|_{L^p_\xi L^2_t L^2_v} \| \la v \ra^{-(\gamma/2+s)_{-}} \widehat g \|_{ L^1_\xi L^\infty_t L^2_v} ,\\
&  \| \widehat f \|_{L^1_\xi L^\infty_t L^2_v} \|\la v \ra^{-(\gamma/2+s)_{-}}  \widehat g \|_{ L^p_\xi L^2_t L^2_v} ,
\|  \widehat f \|_{L^1_\xi L^2_t L^2_v} \| \la v \ra^{-(\gamma/2+s)_{-}} \widehat g \|_{ L^p_\xi L^\infty_t L^2_v} 
\Big\}.
\end{aligned}
$$
\end{lem}

\begin{proof}
Using \eqref{eq:nonlinear_Hs*v'} we write
$$
\begin{aligned}
\left\{ \int_0^\infty \| \la v \ra^{\ell} \widehat \Gamma(f(t),g(t))(\xi) \|_{(H^{s,*}_v)'}^2 \, \d t \right\}^{1/2}
&\lesssim I_1 + I_2 + \min\{ I_3,I_4 \}
\end{aligned}
$$
with
$$
I_1 = \left\{  \int_0^\infty \left( \int_{\Omega'_\eta} \| \la v \ra^{-(\gamma/2+s)_{-}} f(t,\xi-\eta) \|_{L^2_v} \| \widehat g (t,\eta) \|_{H^{s,*}_v} \, \d \eta \right)^2 \, \d t \right\}^{1/2},
$$
$$
I_2 = \left\{  \int_0^\infty \left( \int_{\Omega'_\eta} \|  f(t,\xi-\eta) \|_{H^{s,*}_v} \| \la v \ra^{-(\gamma/2+s)_{-}}\widehat g (t,\eta) \|_{L^2_v} \, \d \eta \right)^2 \, \d t \right\}^{1/2},
$$
$$
I_3 = \left\{  \int_0^\infty \left( \int_{\Omega'_\eta} \| \la v \ra^{-(\gamma/2+s)_{-}} f(t,\xi-\eta) \|_{L^2_v} \| \widehat g (t,\eta) \|_{L^2_v} \, \d \eta \right)^2 \, \d t \right\}^{1/2},
$$
and
$$
I_4 = \left\{  \int_0^\infty \left( \int_{\Omega'_\eta} \|  f(t,\xi-\eta) \|_{L^2_v} \| \la v \ra^{-(\gamma/2+s)_{-}}\widehat g (t,\eta) \|_{L^2_v} \, \d \eta \right)^2 \, \d t \right\}^{1/2}.
$$

We now investigate the term $I_1$. Thanks to Minkowski and H\"older inequalities we then obtain
$$
\begin{aligned}
I_1
&\lesssim  \int_{\Omega'_\eta} \left( \int_0^\infty \|    \la v \ra^{-(\gamma/2+s)_{-}}  \widehat f(t,\xi-\eta) \|_{L^2_v}^2 \| \widehat g (t,\eta) \|_{H^{s,*}_v}^2 \, \d t \right)^{1/2}  \d \eta \\
&\lesssim \int_{\Omega'_\eta} \|   \la v \ra^{-(\gamma/2+s)_{-}}   \widehat f(\xi-\eta) \|_{L^\infty_t L^2_v}  \| \widehat g (\eta) \|_{L^2_t H^{s,*}_v} \, \d \eta.
\end{aligned}
$$
Taking the $L^p_\xi$ norm in above estimate and using Young's inequality for convolution we first obtain
$$
I_1 \lesssim \| \la v \ra^{-(\gamma/2+s)_{-}}  \widehat f \|_{L^p_\xi L^\infty_t L^2_v} \| \widehat g \|_{ L^1_\xi L^2_t H^{s,*}_v}
\quad\text{and}\quad
I_1 \lesssim \| \la v \ra^{-(\gamma/2+s)_{-}}  \widehat f \|_{L^1_\xi L^\infty_t L^2_v} \| \widehat g \|_{ L^p_\xi L^2_t H^{s,*}_v}.
$$
Arguing exactly as above but exchanging the role of $f$ and $g$ when performing H\"older's inequality, we also obtain
$$
I_1 \lesssim \| \la v \ra^{-(\gamma/2+s)_{-}}  \widehat f \|_{L^p_\xi L^2_t L^2_v} \| \widehat g \|_{ L^1_\xi L^\infty_t H^{s,*}_v}  
\quad\text{and}\quad
I_1 \lesssim \| \la v \ra^{-(\gamma/2+s)_{-}}  \widehat f \|_{L^1_\xi L^2_t L^2_v} \| \widehat g \|_{ L^p_\xi L^\infty_t H^{s,*}_v},
$$
that is $I_1 \lesssim \Gamma_1$.

The estimates for the other terms $I_2$, $I_3$ and $I_4$ can be obtained exactly as for $I_1$, so we omit it.
\end{proof}

Arguing exactly as in the proof of Lemma~\ref{lem:nonlinear} but using the weighted estimate \eqref{eq:nonlinear_Hs*v'_soft_weighted}, we also obtain the main weighted nonlinear estimate for soft potentials below, the proof of which we omit for simplicity.

\begin{lem}\label{lem:nonlinear_weighted}
Let $\ell>0$, $\gamma+2s<0$ and $p \in [1,\infty]$. For any smooth enough functions $f,g$ there holds
$$
\| \la v \ra^{\ell} \widehat \Gamma (f, g) \|_{L^p_\xi L^2_t (H^{s,*}_v)'} 
\lesssim \widetilde \Gamma_1 + \widetilde \Gamma_2 + \min\left\{ \widetilde \Gamma_3, \widetilde \Gamma_4 \right\}
$$
where
$$
\begin{aligned}
\widetilde \Gamma_1 
= \min\Big\{ 
&\| \la v \ra^{\gamma/2+s} \widehat f \|_{L^p_\xi L^\infty_t L^2_v } \| \widehat g \|_{ L^1_\xi L^2_t H^{s,*}_v (\la v \ra^\ell)} , \| \la v \ra^{\gamma/2+s}   \widehat f \|_{L^p_\xi L^2_t L^2_v } \| \widehat g \|_{ L^1_\xi L^\infty_t H^{s,*}_v (\la v \ra^\ell)}, \\
& \|  \la v \ra^{\gamma/2+s} \widehat f \|_{L^1_\xi L^\infty_t L^2_v } \| \widehat g \|_{ L^p_\xi L^2_t H^{s,*}_v (\la v \ra^\ell)} , \|  \la v \ra^{\gamma/2+s} \widehat f \|_{L^1_\xi L^2_t L^2_v } \| \widehat g \|_{ L^p_\xi L^\infty_t H^{s,*}_v (\la v \ra^\ell)}  
\Big\},
\end{aligned}
$$
$$
\begin{aligned}
\widetilde \Gamma_2 = \min\Big\{ 
& \| \widehat f \|_{L^p_\xi L^\infty_t H^{s,*}_v} \|\la v \ra^{\gamma/2+s}  \widehat g \|_{ L^1_\xi L^2_t L^2_v(\la v \ra^\ell)} , \| \widehat f \|_{L^p_\xi L^2_t H^{s,*}_v } \| \la v \ra^{\gamma/2+s} \widehat g \|_{ L^1_\xi L^\infty_t L^2_v(\la v \ra^\ell)},  \\
& \| \widehat f \|_{L^1_\xi L^\infty_t H^{s,*}_v} \| \la v \ra^{\gamma/2+s}  \widehat g \|_{ L^p_\xi L^2_t L^2_v(\la v \ra^\ell)} ,
\| \widehat f \|_{L^1_\xi L^2_t H^{s,*}_v } \| \la v \ra^{\gamma/2+s} \widehat g \|_{ L^p_\xi L^\infty_t L^2_v(\la v \ra^\ell)} 
\Big\},
\end{aligned}
$$
$$
\begin{aligned}
\widetilde \Gamma_3 = \min\Big\{ 
&\| \la v \ra^{\gamma/2+s}  \widehat f \|_{L^p_\xi L^\infty_t L^2_v} \| \widehat g \|_{ L^1_\xi L^2_t L^2_v(\la v \ra^\ell)} , \| \la v \ra^{\gamma/2+s} \widehat f \|_{L^p_\xi L^2_t L^2_v} \| \widehat g \|_{ L^1_\xi L^\infty_t L^2_v(\la v \ra^\ell)} ,\\
& \| \la v \ra^{\gamma/2+s} \widehat f \|_{L^1_\xi L^\infty_t L^2_v} \| \widehat g \|_{ L^p_\xi L^2_t L^2_v(\la v \ra^\ell)} , \| \la v \ra^{\gamma/2+s} \widehat f \|_{L^1_\xi L^2_t L^2_v} \| \widehat g \|_{ L^p_\xi L^\infty_t L^2_v(\la v \ra^\ell)}  
\Big\},
\end{aligned}
$$
and
$$
\begin{aligned}
\widetilde \Gamma_4 = \min\Big\{ 
&\|  \widehat f \|_{L^p_\xi L^\infty_t L^2_v} \| \la v \ra^{\gamma/2+s} \widehat g \|_{ L^1_\xi L^2_t L^2_v(\la v \ra^\ell)} , \| \widehat f \|_{L^p_\xi L^2_t L^2_v} \| \la v \ra^{\gamma/2+s} \widehat g \|_{ L^1_\xi L^\infty_t L^2_v(\la v \ra^\ell)}, \\
& \|   \widehat f \|_{L^1_\xi L^\infty_t L^2_v} \| \la v \ra^{\gamma/2+s} \widehat g \|_{ L^p_\xi L^2_t L^2_v(\la v \ra^\ell)} , \| \widehat f \|_{L^1_\xi L^2_t L^2_v} \| \la v \ra^{\gamma/2+s} \widehat g \|_{ L^p_\xi L^\infty_t L^2_v(\la v \ra^\ell)}  
\Big\} .
\end{aligned}
$$
\end{lem}

\subsection{Proof of Theorem~\ref{theo:boltzmann}--(1)}\label{sec:proof_theo_main1}

We consider the torus case $\Omega_x = \T^3$.

\subsubsection{Global existence}\label{sec:proof_theo_main1_existence}

Let $\ell = 0$ in the hard potentials case $\gamma+2s \ge 0$, and $\ell \ge 0$ in the soft potentials case $\gamma+2s<0$. We define the space
$$
\mathscr{X} = \left\{ f \in \FF^{-1}_x(L^1_\xi L^\infty_t L^2_v (\la v \ra^\ell) \cap L^1_\xi L^2_t H^{s,*}_v (\la v \ra^\ell) ) \mid f \text{ satisfies } \eqref{eq:normalization2}, \; \| f \|_{\mathscr{X}} <\infty \right\}
$$
with
$$
\| f \|_{\mathscr{X}} := \| \widehat f \|_{L^1_\xi L^\infty_t L^2_v(\la v \ra^\ell)} + \frac{1}{\eps} \|  \P^\perp \widehat f \|_{L^1_\xi L^2_t H^{s,*}_v(\la v \ra^\ell)} +  \|  \P \widehat f \|_{L^1_\xi L^2_t L^2_v}.
$$
Let $f^\eps_0 \in \FF^{-1}_x(L^1_\xi  L^2_v (\la v \ra^\ell) )$ verify
$$
\| \widehat f^\eps_0 \|_{L^1_\xi L^2_v} \le \eta_0,
$$
and consider the map $\Phi : \mathscr{X} \to \mathscr{X}$, $f^\eps \mapsto \Phi[f^\eps]$ defined by, for all $t \ge 0$ ,
\begin{equation}\label{eq:def:Phi}
\Phi[f^\eps](t) = U^\eps(t)  f^\eps_0 + \frac{1}{\eps}\int_0^t  U^\eps(t-\tau)  \Gamma ( f^\eps(\tau) , f^\eps(\tau)) \, \d \tau,
\end{equation}
thus, for all $\xi \in \Z^3$,
\begin{equation}\label{eq:def:Phi_fourier}
\widehat\Phi[f^\eps](t,\xi) = \widehat U^\eps(t,\xi) \widehat f^\eps_0(\xi) + \frac{1}{\eps}\int_0^t \widehat U^\eps(t-\tau,\xi) \widehat \Gamma ( f^\eps(\tau) , f^\eps(\tau))(\xi) \, \d \tau.
\end{equation}


Thanks to Proposition~\ref{prop:estimate_Ueps} we deduce, for some constant $C_0>0$ independent of $\eps$, that
$$
\| U^\eps(\cdot) f^\eps_0 \|_{\mathscr{X}} \le C_0 \| \widehat f^\eps_0 \|_{L^1_\xi L^2_v }.
$$
Moreover thanks to Proposition~\ref{prop:estimate_Ueps_regularization} and the fact that $\mathbf{P} \Gamma(f^\eps, f^\eps) = 0$ from \eqref{eq:Gamma_invariants}, we get, for some constant $C_1>0$ independent of $\eps$, 
$$
\begin{aligned}
\frac{1}{\eps} \left\| \int_0^t  U^\eps(t-\tau)  \Gamma ( f^\eps(\tau) , f^\eps(\tau)) \, \d \tau \right\|_{\mathscr{X}} 
&\le C_1 \| \la v \ra^\ell \widehat \Gamma (f^\eps,f^\eps) \|_{L^1_\xi L^2_t (H^{s,*}_v)'}\\
&\le C_1 \|  \widehat  f^\eps \|_{L^1_\xi L^\infty_t L^2_v} \| \widehat  f^\eps \|_{L^1_\xi L^2_t H^{s,*}_v (\la v \ra^\ell)}\\
&\le C_1 \| f^\eps \|_{\mathscr{X}}^2,
\end{aligned}
$$
where we have used Lemma~\ref{lem:nonlinear} or Lemma~\ref{lem:nonlinear_weighted} in the second line together with $\| \la v \ra^{-(\gamma/2+s)_{-}}  \phi \|_{L^2_v} \lesssim \min\{ \| \phi \|_{L^2_v} , \| \phi \|_{H^{s,*}_v} \}$. Gathering previous estimates yields
\begin{equation}\label{eq:estimate_Phi_1}
\| \Phi[f^\eps] \|_{\mathscr{X}} \le C_0 \| \widehat f^\eps_0 \|_{L^1_\xi L^2_v} + C_1 \| f^\eps \|_{\mathscr{X}}^2.
\end{equation}

Moreover for $f^\eps, g^\eps \in \mathscr{X}$ we first observe that
$$
\begin{aligned}
\Phi[f^\eps](t) - \Phi[g^\eps](t)
&= \frac{1}{\eps}\int_0^t  U^\eps(t-\tau)  \left\{ \Gamma ( f^\eps(\tau) , f^\eps(\tau)) -  \Gamma ( g^\eps(\tau) , g^\eps(\tau)) \right\} \, \d \tau .
\end{aligned}.
$$
Introducing the symmetrized version $\Gamma_{\mathrm{sym}}$ of $\Gamma$, namely
\begin{equation}\label{eq:def:Gamma_sym}
\Gamma_{\mathrm{sym}} (f,g) = \frac12 \Gamma(f,g) + \frac12 \Gamma(g,f),
\end{equation} 
we remark that, arguing as from obtaining the collision invariants in \eqref{eq:collision_invariants}, we have that $\Gamma_{\mathrm{sym}}(f,g)$ also verifies \eqref{eq:Gamma_invariants}, which means $\mathbf{P}  \Gamma_{\mathrm{sym}}(f,g) = 0$. 
We therefore obtain
\begin{equation}\label{eq:Phifeps-Phigeps}
\begin{aligned}
\Phi[f^\eps](t) - \Phi[g^\eps](t)
&= \frac{1}{\eps}\int_0^t  U^\eps(t-\tau)  \Gamma_{\mathrm{sym}} ( f^\eps(\tau) , f^\eps(\tau)-g^\eps(\tau)) \, \d \tau \\
&\quad
+\frac{1}{\eps}\int_0^t  U^\eps(t-\tau)  \Gamma_{\mathrm{sym}} ( g^\eps(\tau) , f^\eps(\tau)-g^\eps(\tau)) \, \d \tau 
\end{aligned}
\end{equation}
with $\mathbf{P} \Gamma_{\mathrm{sym}} ( f^\eps , f^\eps-g^\eps) = 0$ and $\mathbf{P} \Gamma_{\mathrm{sym}} ( g^\eps , f^\eps-g^\eps) = 0$.
Hence Proposition~\ref{prop:estimate_Ueps_regularization} and  Lemma~\ref{lem:nonlinear} or Lemma~\ref{lem:nonlinear_weighted} yields
$$
\begin{aligned}
&\| \Phi[f^\eps] - \Phi[g^\eps] \|_{\mathscr{X}} \\
&\quad 
\lesssim  \| \la v \ra^\ell \widehat \Gamma (f^\eps,f^\eps - g^\eps) \|_{L^1_\xi L^2_t (H^{s,*}_v)'} 
+  \| \la v \ra^\ell \widehat \Gamma (f^\eps - g^\eps,f^\eps) \|_{L^1_\xi L^2_t (H^{s,*}_v)'} \\
&\quad\quad
+  \| \la v \ra^\ell \widehat \Gamma (g^\eps,f^\eps - g^\eps) \|_{L^1_\xi L^2_t (H^{s,*}_v)'} 
+  \| \la v \ra^\ell \widehat \Gamma (f^\eps - g^\eps,g^\eps) \|_{L^1_\xi L^2_t (H^{s,*}_v)'} \\
&\quad 
\lesssim \|  \widehat  f^\eps \|_{L^1_\xi L^\infty_t L^2_v (\la v \ra^\ell)} \| \widehat  f^\eps - \widehat g^\eps \|_{L^1_\xi L^2_t H^{s,*}_v(\la v \ra^\ell)} +  \|  \widehat  f^\eps - g^\eps \|_{L^1_\xi L^\infty_t L^2_v (\la v \ra^\ell)} \| \widehat  f^\eps \|_{L^1_\xi L^2_t H^{s,*}_v(\la v \ra^\ell)} \\
&\quad\quad
+ \|  \widehat  g^\eps \|_{L^1_\xi L^\infty_t L^2_v(\la v \ra^\ell)} \| \widehat  f^\eps - \widehat g^\eps \|_{L^1_\xi L^2_t H^{s,*}_v(\la v \ra^\ell)} +  \|  \widehat  f^\eps - g^\eps \|_{L^1_\xi L^\infty_t L^2_v(\la v \ra^\ell)} \| \widehat  g^\eps \|_{L^1_\xi L^2_t H^{s,*}_v(\la v \ra^\ell)},
\end{aligned}
$$
thus we get, for some constant $C_1>0$ independent of $\eps$,
\begin{equation}\label{eq:estimate_Phi_2}
\begin{aligned}
\| \Phi[f^\eps] - \Phi[g^\eps] \|_{\mathscr{X}} 
\le C_1 (\| f^\eps \|_{\mathscr{X}} + \| g^\eps \|_{\mathscr{X}}) \| f^\eps - g^\eps \|_{\mathscr{X}}.
\end{aligned}
\end{equation}

As a consequence of estimates \eqref{eq:estimate_Phi_1}--\eqref{eq:estimate_Phi_2} we can construct a global solution $f^\eps \in \mathscr{X}$ to the equation~\eqref{eq:feps} if $\eta_0>0$ is small enough. Indeed let $B_{\mathscr{X}}(\eta) = \{ f \in \mathscr{X} \mid \| f \|_{\mathscr{X}} \le \eta \} $ for $\eta>0$ be the closed ball in $\mathscr{X}$ of radius $\eta$. Choose 
$$
\eta = 2 C_0 \eta_0
\quad\text{and}\quad
\eta_0 \le \frac{1}{8 C_0 C_1},
$$
and observe that $\eta_0$ does not depend on $\eps$. Then for any $f^\eps \in B_{\mathscr{X}}(\eta)$ we have from \eqref{eq:estimate_Phi_1} that
$$
\| \Phi [f^\eps] \|_{\mathscr{X}} \le 2 C_0 \eta_0 = \eta,
$$
and for any $f^\eps,g^\eps \in B_{\mathscr{X}}(\eta)$ we have from \eqref{eq:estimate_Phi_2} that
$$
\begin{aligned}
\| \Phi [f^\eps] - \Phi [g^\eps] \|_{\mathscr{X}}
&\le 4 C_0 C_1 \eta_0 \| f^\eps - g^\eps \|_{\mathscr{X}} 
\le \frac{1}{2} \| f^\eps - g^\eps \|_{\mathscr{X}}.
\end{aligned}
$$
Thus $\Phi : B_{\mathscr{X}}(\eta) \to B_{\mathscr{X}}(\eta)$ is a contraction and therefore there is a unique $f^\eps \in B_{\mathscr{X}}(\eta)$ such that $\Phi[f^\eps] = f^\eps$, which is then a solution to \eqref{eq:feps}.
This completes the proof of global existence in Theorem~\ref{theo:boltzmann}--(1) together with estimate~\eqref{eq:theo1:existence}.

\subsubsection{Uniqueness}\label{sec:proof_theo_main1_uniqueness}

Consider two solutions $f^\eps, g^\eps \in \FF^{-1}_x(L^1_\xi L^\infty_t L^2_v (\la v \ra^{\ell}) \cap L^1_\xi L^2_t H^{s,*}_v (\la v \ra^{\ell}) )$ to \eqref{eq:feps} associated to the same initial data $f^\eps_0 \in \FF^{-1}_x(L^1_\xi L^2_v (\la v \ra^{\ell}))$ satisfying $\| \widehat f^\eps_0 \|_{L^1_\xi L^2_v (\la v \ra^{\ell})} \le \eta_0$ with $\eta_0>0$ small enough and
$$
\begin{aligned}
\| \widehat f^\eps \|_{L^1_\xi L^\infty_t L^2_v(\la v \ra^{\ell})} + \| \widehat f^\eps  \|_{L^1_\xi L^2_t H^{s,*}_v(\la v \ra^{\ell})} &\lesssim \| \widehat f^\eps_0 \|_{L^1_\xi L^2_v (\la v \ra^{\ell})}, \\
\| \widehat g^\eps  \|_{L^1_\xi L^\infty_t L^2_v(\la v \ra^{\ell})} + \| \widehat g^\eps  \|_{L^1_\xi L^2_t H^{s,*}_v(\la v \ra^{\ell})} &\lesssim \| \widehat f^\eps_0 \|_{L^1_\xi L^2_v (\la v \ra^{\ell})}.
\end{aligned}
$$
Arguing as in the existence proof above, we obtain
$$
\begin{aligned}
&\| f^\eps - g^\eps \|_{L^1_\xi L^\infty_t L^2_v(\la v \ra^{\ell})}
+ \| f^\eps - g^\eps \|_{L^1_\xi L^2_t H^{s,*}_v(\la v \ra^{\ell})} \\
&\qquad
\lesssim \left( \| f^\eps \|_{L^1_\xi L^\infty_t L^2_v(\la v \ra^{\ell})} + \| g^\eps \|_{L^1_\xi L^2_t H^{s,*}_v(\la v \ra^{\ell})}   \right) \left( \| f^\eps - g^\eps \|_{L^1_\xi L^\infty_t L^2_v(\la v \ra^{\ell})} + \| f^\eps - g^\eps \|_{L^1_\xi L^2_t H^{s,*}_v(\la v \ra^{\ell})}
 \right).
\end{aligned}
$$
Using that $ \| f^\eps \|_{L^1_\xi L^\infty_t L^2_v(\la v \ra^{\ell})} + \| g^\eps \|_{L^1_\xi L^2_t H^{s,*}_v(\la v \ra^{\ell})} \lesssim \eta_0$ is small enough we conclude the proof of uniqueness in Theorem~\ref{theo:boltzmann}--(1).

\subsubsection{Decay for hard potentials}\label{sec:proof_theo_main1_decay_hard}
Let $f^\eps$ be the solution to \eqref{eq:feps} constructed in Theorem~\ref{theo:boltzmann}--(1) associated to the initial data $f^\eps_0$, and let $\lambda >0$ be given by Proposition~\ref{prop:estimate_Ueps}. Using Proposition~\ref{prop:estimate_Ueps_decay_hard_torus} and Proposition~\ref{prop:estimate_Ueps_regularization_hard_torus} we obtain
\begin{equation*}
\begin{aligned}
\| \mathrm{e}_{\lambda} \widehat f^\eps \|_{L^1_\xi L^\infty_t L^2_v} 
&+ \frac{1}{\eps} \|  \mathrm{e}_{\lambda}\P^\perp \widehat f^\eps \|_{L^1_\xi L^2_t H^{s,*}_v} 
+ \| \mathrm{e}_{\lambda} \P \widehat f^\eps \|_{L^1_\xi L^2_t L^2_v} \\
&\lesssim  \| \widehat f^\eps_0 \|_{L^1_\xi L^2_v }
+ \| \mathrm{e}_{\lambda}  \widehat \Gamma (f^\eps , f^\eps) \|_{L^1_\xi L^2_t (H^{s,*}_v)'}. 
\end{aligned}
\end{equation*}
Thanks to Lemma~\ref{lem:nonlinear} we have
$$
\| \mathrm{e}_{\lambda}  \widehat \Gamma (f^\eps , f^\eps) \|_{L^1_\xi L^2_t (H^{s,*}_v)'}
\lesssim \| \mathrm{e}_{\lambda} \widehat f^\eps \|_{L^1_\xi L^\infty_t L^2_v} \| \widehat f^\eps \|_{L^1_\xi L^2_t H^{s,*}_v},
$$
therefore using that $\| \widehat f^\eps \|_{L^1_\xi L^2_t H^{s,*}_v}\lesssim  \| \widehat f^\eps_0 \|_{L^1_\xi L^2_v }$ from the existence result in Theorem~\ref{theo:boltzmann}--(1), we obtain
\begin{equation*}
\begin{aligned}\| \mathrm{e}_{\lambda} \widehat f^\eps \|_{L^1_\xi L^\infty_t L^2_v} 
&+ \frac{1}{\eps} \|  \mathrm{e}_{\lambda}\P^\perp \widehat f^\eps \|_{L^1_\xi L^2_t H^{s,*}_v} 
+  \| \mathrm{e}_{\lambda} \P \widehat f^\eps \|_{L^1_\xi L^2_t L^2_v} \\
&\lesssim  \| \widehat f^\eps_0 \|_{L^1_\xi L^2_v }
+  \| \mathrm{e}_{\lambda} \widehat f^\eps \|_{L^1_\xi L^\infty_t L^2_v} \| \widehat f^\eps \|_{L^1_\xi L^2_t H^{s,*}_v}  \\
&\lesssim \| \widehat f^\eps_0 \|_{L^1_\xi L^2_v }
+  \| \widehat f^\eps_0 \|_{L^1_\xi L^2_v }
 \| \mathrm{e}_{\lambda} \widehat f^\eps \|_{L^1_\xi L^\infty_t L^2_v} . 
\end{aligned}
\end{equation*}
Since $\| \widehat f^\eps_0 \|_{L^1_\xi L^2_v} \le \eta_0$ is small enough, the last term in the right-hand side can be absorbed into the left-hand side, which thus concludes the proof of the decay estimate~\eqref{eq:theo1:decay_hard} in Theorem~\ref{theo:boltzmann}--(1).

\subsubsection{Decay for soft potentials}\label{sec:proof_theo_main1_decay_soft}
Let $f^\eps$ be the solution to \eqref{eq:feps} constructed in Theorem~\ref{theo:boltzmann}--(1) associated to the initial data $f^\eps_0$ with $\ell >0$, and let $0 < \omega < \frac{\ell}{|\gamma+2s|}$.

Using Proposition~\ref{prop:estimate_Ueps_decay_soft_torus} and Proposition~\ref{prop:estimate_Ueps_regularization_soft_torus} we obtain
\begin{equation*}
\begin{aligned}
\| \mathrm{p}_{\omega} \widehat f^\eps \|_{L^1_\xi L^\infty_t L^2_v} 
&+ \frac{1}{\eps} \| \mathrm{p}_{\omega} \P^\perp \widehat f^\eps \|_{L^1_\xi L^2_t H^{s,*}_v} 
+ \| \mathrm{p}_{\omega} \P \widehat f^\eps \|_{L^1_\xi L^2_t L^2_v} \\
&\lesssim  \| \widehat f^\eps_0 \|_{L^1_\xi L^2_v}
+\| \widehat f^\eps \|_{L^1_\xi L^\infty_t L^2_v (\la v \ra^\ell)}
+ \| \mathrm{p}_{\omega}  \widehat \Gamma (f^\eps , f^\eps) \|_{L^1_\xi L^2_t (H^{s,*}_v)'} ,
\end{aligned}
\end{equation*}
and from  Lemma~\ref{lem:nonlinear} we have
$$
\| \mathrm{p}_{\omega}  \widehat \Gamma (f^\eps , f^\eps) \|_{L^1_\xi L^2_t (H^{s,*}_v)'}
\lesssim \| \mathrm{p}_{\omega}  \widehat f^\eps \|_{L^1_\xi L^\infty_t L^2_v} 
\|  \widehat f^\eps \|_{L^1_\xi L^2_t H^{s,*}_v} .
$$
Using that $\|  \widehat f^\eps \|_{L^1_\xi L^\infty_t L^2_v(\la v \ra^\ell)}  + \|  \widehat f^\eps \|_{L^1_\xi L^2_t H^{s,*}_v} \lesssim \| \widehat f^\eps_0 \|_{L^1_\xi L^2_v (\la v \ra^\ell)} $ from the existence result in in Theorem~\ref{theo:boltzmann}--(1), we deduce
\begin{equation*}
\begin{aligned}
\| \mathrm{p}_{\omega} \widehat f^\eps \|_{L^1_\xi L^\infty_t L^2_v} 
&+ \frac{1}{\eps} \| \mathrm{p}_{\omega} \P^\perp \widehat f^\eps \|_{L^1_\xi L^2_t H^{s,*}_v} 
+ \| \mathrm{p}_{\omega} \P \widehat f^\eps \|_{L^1_\xi L^2_t L^2_v} \\
&\lesssim  \| \widehat f^\eps_0 \|_{L^1_\xi L^2_v (\la v \ra^\ell)}
+ \| \widehat f^\eps_0 \|_{L^1_\xi L^2_v (\la v \ra^\ell)} \| \mathrm{p}_{\omega} \widehat f^\eps \|_{L^1_\xi L^\infty_t L^2_v} .
\end{aligned}
\end{equation*}
Since $\| \widehat f^\eps_0 \|_{L^1_\xi L^2_v (\la v \ra^\ell)} \le \eta_0$ is small enough, the last term in the right-hand side can be absorbed into the left-hand side, which thus concludes the proof of the decay estimate~\eqref{eq:theo1:decay_soft} in Theorem~\ref{theo:boltzmann}--(1).

\subsection{Proof of Theorem~\ref{theo:boltzmann}--(2)}\label{sec:proof_theo_main1bis}

We consider the whole space case $\Omega_x= \R^3$.

\subsubsection{Global existence}\label{sec:proof_theo_main1bis_existence}

Let $\ell = 0$ in the hard potentials case $\gamma+1s \ge 0$, $\ell \ge 0$ in the soft potentials case $\gamma+2s<0$.
Recall that $p \in (3/2,\infty]$ and define the space
$$
\begin{aligned}
\mathscr{Y} = \Big\{ f \in &\FF^{-1}_x(L^1_\xi L^\infty_t L^2_v (\la v \ra^\ell) \cap L^1_\xi L^2_t H^{s,**}_v (\la v \ra^\ell)) \cap L^p_\xi L^\infty_t L^2_v (\la v \ra^\ell) \cap L^p_\xi L^2_t H^{s,**}_v (\la v \ra^\ell) ) \;\Big|\; \| f \|_{\mathscr{Y}} <\infty \Big\},
\end{aligned}
$$
with, recalling that $\| \cdot \|_{H^{s,**}_v}$ is defined in \eqref{eq:def:Hs**v},
$$
\begin{aligned}
\| f \|_{\mathscr{Y}} 
&:= \| \widehat f \|_{L^1_\xi L^\infty_t L^2_v(\la v \ra^\ell)} + \frac{1}{\eps} \|  \P^\perp \widehat f \|_{L^1_\xi L^2_t H^{s,*}_v(\la v \ra^\ell)} +  \left\| \frac{|\xi|}{\la \xi \ra}  \P \widehat f \right\|_{L^1_\xi L^2_t L^2_v} \\
&\quad
+ \| \widehat f \|_{L^p_\xi L^\infty_t L^2_v(\la v \ra^\ell)} + \frac{1}{\eps} \|  \P^\perp \widehat f \|_{L^p_\xi L^2_t H^{s,*}_v(\la v \ra^\ell)} 
+  \left\| \frac{|\xi|}{\la \xi \ra}  \P \widehat f \right\|_{L^p_\xi L^2_t L^2_v}.
\end{aligned}
$$
Let $f^\eps_0 \in \FF^{-1}_x(L^1_\xi L^2_v (\la v \ra^\ell) \cap L^p_\xi L^2_v (\la v \ra^\ell) )$ verify
$$
\| \widehat f^\eps_0 \|_{L^1_\xi L^2_v(\la v \ra^\ell} + \| \widehat f^\eps_0 \|_{L^p_\xi L^2_v(\la v \ra^\ell)} \le \eta_0,
$$
and consider the map $\Phi : \mathscr{Y} \to \mathscr{Y}$, $f^\eps \mapsto \Phi[f^\eps]$ given by \eqref{eq:def:Phi}, which in particular satisfies \eqref{eq:def:Phi_fourier} for all $\xi \in \R^3$.

Thanks to Proposition~\ref{prop:estimate_Ueps} we deduce, for some constant $C_0>0$ independent of $\eps$, that
$$
\| U^\eps(\cdot) f^\eps_0 \|_{\mathscr{Y}} \le C_0 \left( \| \widehat f^\eps_0 \|_{L^1_\xi L^2_v (\la v \ra^\ell)}
+ \| \widehat f^\eps_0 \|_{L^p_\xi L^2_v (\la v \ra^\ell)} \right).
$$
Moreover thanks to Proposition~\ref{prop:estimate_Ueps_regularization} and the fact that $\mathbf{P} \Gamma(f^\eps, f^\eps) = 0$ from \eqref{eq:Gamma_invariants}, we get 
$$
\begin{aligned}
&\frac{1}{\eps} \left\| \int_0^t  U^\eps(t-\tau)  \Gamma ( f^\eps(\tau) , f^\eps(\tau)) \, \d \tau \right\|_{\mathscr{Y}} \\
&\qquad 
\lesssim   \| \la v \ra^\ell \widehat \Gamma (f^\eps,f^\eps) \|_{L^1_\xi L^2_t (H^{s,*}_v)'} + \| \la v \ra^\ell \widehat \Gamma (f^\eps,f^\eps) \|_{L^p_\xi L^2_t (H^{s,*}_v)'}  \\
&\qquad
\lesssim  \left( \|  \widehat  f^\eps \|_{L^1_\xi L^\infty_t L^2_v(\la v \ra^\ell)}  + \|  \widehat  f^\eps \|_{L^p_\xi L^\infty_t L^2_v(\la v \ra^\ell)} \right) \| \widehat  f^\eps \|_{L^1_\xi L^2_t H^{s,*}_v(\la v \ra^\ell)},
\end{aligned}
$$
where we have used Lemma~\ref{lem:nonlinear} or Lemma~\ref{lem:nonlinear_weighted} in the second line together with $\| \la v \ra^{-(\gamma/2+s)_{-}}  \phi \|_{L^2_v} \lesssim \min\{ \| \phi \|_{L^2_v}  , \| \phi \|_{H^{s,*}_v} \}$. We now observe that, splitting $\widehat  f^\eps = \P^\perp \widehat  f^\eps + \P \widehat  f^\eps$, on the one hand we have
$$
\begin{aligned}
\| \widehat  f^\eps \|_{L^1_\xi L^2_t H^{s,*}_v(\la v \ra^\ell)} 
&\lesssim \| \P^\perp \widehat  f^\eps \|_{L^1_\xi L^2_t H^{s,*}_v(\la v \ra^\ell)}  + \|  \P \widehat f^\eps \|_{L^1_\xi L^2_t L^2_v}. 
\end{aligned}
$$
On the other hand
$$
\begin{aligned}
\| \P\widehat  f^\eps \|_{L^1_\xi L^2_t L^2_v}
& \lesssim \| \mathbf{1}_{|\xi| \ge 1}  \P \widehat f^\eps \|_{L^1_\xi L^2_t L^2_v} + \| \mathbf{1}_{|\xi| < 1}  \P  \widehat f^\eps \|_{L^1_\xi L^2_t L^2_v} \\
& \lesssim \left\| \mathbf{1}_{|\xi| \ge 1} \frac{|\xi|}{\la \xi \ra} \P \widehat f^\eps \right\|_{L^1_\xi L^2_t L^2_v} 
+ \left\| \mathbf{1}_{|\xi| < 1} |\xi|^{-1} \, \frac{|\xi|}{\la \xi \ra}  \P \widehat f^\eps \right\|_{L^1_\xi L^2_t L^2_v} \\
& \lesssim \left\|  \frac{|\xi|}{\la \xi \ra} \P \widehat f^\eps \right\|_{L^1_\xi L^2_t L^2_v} 
+ \left\|  \frac{|\xi|}{\la \xi \ra} \P \widehat f^\eps \right\|_{L^p_\xi L^2_t L^2_v},
\end{aligned}
$$
where we have used H\"older's inequality in last line, using that $p>3/2$ so that $\mathbf{1}_{|\xi| < 1} |\xi|^{-1} \in L^{p'}_\xi$. Putting together the two last estimates, we have
\begin{equation}\label{eq:fPf_Hs*v}
\begin{aligned}
\| \widehat  f^\eps \|_{L^1_\xi L^2_t H^{s,*}_v(\la v \ra^\ell)} 
\lesssim \| \P^\perp \widehat  f^\eps \|_{L^1_\xi L^2_t H^{s,*}_v(\la v \ra^\ell)}
+ \left\|  \frac{|\xi|}{\la \xi \ra} \P \widehat f^\eps \right\|_{L^1_\xi L^2_t L^2_v} 
+ \left\|  \frac{|\xi|}{\la \xi \ra}  \P \widehat f^\eps \right\|_{L^p_\xi L^2_t L^2_v}.
\end{aligned}
\end{equation}
We hence deduce that there is some constant $C_1>0$, independent of $\eps$, such that
$$
\begin{aligned}
&\frac{1}{\eps} \left\| \int_0^t  U^\eps(t-\tau)  \Gamma ( f^\eps(\tau) , f^\eps(\tau)) \, \d \tau \right\|_{\mathscr{X}} \\
&\qquad 
\le C_1  \left( \|  \widehat  f^\eps \|_{L^1_\xi L^\infty_t L^2_v}  + \|  \widehat  f^\eps \|_{L^p_\xi L^\infty_t L^2_v} \right)\\
&\qquad\qquad \times  \left( \| \P^\perp \widehat  f^\eps \|_{L^1_\xi L^2_t H^{s,*}_v(\la v \ra^\ell)}  + \left\|  \frac{|\xi|}{\la \xi \ra} \P \widehat f^\eps \right\|_{L^1_\xi L^2_t L^2_v}  + \left\|   \frac{|\xi|}{\la \xi \ra}  \P \widehat f^\eps \right\|_{L^p_\xi L^2_t L^2_v}\right)  .
\end{aligned}
$$
Therefore, gathering previous estimates, we obtain
\begin{equation}\label{eq:Phif_wholespace}
\| \Phi[f^\eps] \|_{\mathscr{Y}} \le C_0 \left( \| \widehat f^\eps_0 \|_{L^1_\xi L^2_v (\la v \ra^\ell} +  \| \widehat f^\eps_0 \|_{L^p_\xi L^2_v (\la v \ra^\ell} \right) + C_1 \| f^\eps \|_{\mathscr{Y}}^2.
\end{equation}

Moreover, for $f^\eps , g^\eps \in \mathscr{Y}$ we first observe that $\Phi[f^\eps] - \Phi[g^\eps]$ satisfies \eqref{eq:Phifeps-Phigeps}. Therefore
we obtain arguing as above, thanks to Proposition~\ref{prop:estimate_Ueps_regularization} together with $\mathbf{P} \Gamma_{\mathrm{sym}} (f^\eps, f^\eps - g^\eps)$, Lemma~\ref{lem:nonlinear} and Lemma~\ref{lem:nonlinear_weighted}, that
$$
\begin{aligned}
&\frac{1}{\eps} \left\| \int_0^t  U^\eps(t-\tau)  \Gamma_{\mathrm{sym}} ( f^\eps(\tau) , f^\eps(\tau) - g^\eps(\tau) ) \, \d \tau \right\|_{\mathscr{X}} \\
&\qquad 
\lesssim   \| \la v \ra^\ell \widehat \Gamma (f^\eps,f^\eps-g^\eps) \|_{L^1_\xi L^2_t (H^{s,*}_v)'} + \| \la v \ra^\ell \widehat \Gamma (f^\eps,f^\eps-g^\eps) \|_{L^p_\xi L^2_t (H^{s,*}_v)'}  \\
&\qquad\quad
+ \| \la v \ra^\ell \widehat \Gamma (f^\eps-g^\eps, f^\eps) \|_{L^1_\xi L^2_t (H^{s,*}_v)'} + \| \la v \ra^\ell \widehat \Gamma (f^\eps-g^\eps , f^\eps) \|_{L^p_\xi L^2_t (H^{s,*}_v)'}\\
&\qquad
\lesssim   \|  \widehat  f^\eps \|_{L^1_\xi L^\infty_t L^2_v(\la v \ra^\ell)} \| \widehat  f^\eps - \widehat g^\eps \|_{L^1_\xi L^2_t H^{s,*}_v(\la v \ra^\ell)}    
+  \|  \widehat  f^\eps \|_{L^p_\xi L^\infty_t L^2_v(\la v \ra^\ell)} \| \widehat  f^\eps - \widehat g^\eps \|_{L^1_\xi L^2_t H^{s,*}_v(\la v \ra^\ell)}   \\
&\qquad \quad
+ \|  \widehat  f^\eps - \widehat g^\eps \|_{L^1_\xi L^\infty_t L^2_v(\la v \ra^\ell)} \| \widehat  f^\eps  \|_{L^1_\xi L^2_t H^{s,*}_v(\la v \ra^\ell)}
+ \|  \widehat  f^\eps - \widehat g^\eps \|_{L^p_\xi L^\infty_t L^2_v(\la v \ra^\ell)} \| \widehat  f^\eps  \|_{L^1_\xi L^2_t H^{s,*}_v(\la v \ra^\ell)} , 
\end{aligned}
$$
and similarly
$$
\begin{aligned}
&\frac{1}{\eps} \left\| \int_0^t  U^\eps(t-\tau)  \Gamma_{\mathrm{sym}} ( g^\eps(\tau) , f^\eps(\tau) - g^\eps(\tau) ) \, \d \tau \right\|_{\mathscr{X}} \\
&\qquad 
\lesssim   \| \la v \ra^\ell \widehat \Gamma (g^\eps,f^\eps-g^\eps) \|_{L^1_\xi L^2_t (H^{s,*}_v)'} + \| \la v \ra^\ell \widehat \Gamma (g^\eps,f^\eps-g^\eps) \|_{L^p_\xi L^2_t (H^{s,*}_v)'}  \\
&\qquad\quad
+ \| \la v \ra^\ell \widehat \Gamma (f^\eps-g^\eps, g^\eps) \|_{L^1_\xi L^2_t (H^{s,*}_v)'} + \| \la v \ra^\ell \widehat \Gamma (f^\eps-g^\eps , g^\eps) \|_{L^p_\xi L^2_t (H^{s,*}_v)'}\\
&\qquad
\lesssim   \|  \widehat  g^\eps \|_{L^1_\xi L^\infty_t L^2_v(\la v \ra^\ell)} \| \widehat  f^\eps - \widehat g^\eps \|_{L^1_\xi L^2_t H^{s,*}_v(\la v \ra^\ell)}    
+  \|  \widehat  g^\eps \|_{L^p_\xi L^\infty_t L^2_v(\la v \ra^\ell)} \| \widehat  f^\eps - \widehat g^\eps \|_{L^1_\xi L^2_t H^{s,*}_v(\la v \ra^\ell)}   \\
&\qquad \quad
+ \|  \widehat  f^\eps - \widehat g^\eps \|_{L^1_\xi L^\infty_t L^2_v(\la v \ra^\ell)} \| \widehat  g^\eps  \|_{L^1_\xi L^2_t H^{s,*}_v(\la v \ra^\ell)}
+ \|  \widehat  f^\eps - \widehat g^\eps \|_{L^p_\xi L^\infty_t L^2_v(\la v \ra^\ell)} \| \widehat  g^\eps  \|_{L^1_\xi L^2_t H^{s,*}_v(\la v \ra^\ell)}.  
\end{aligned}
$$
Together with \eqref{eq:fPf_Hs*v} for the terms in $\|\cdot \|_{L^1_\xi L^2_t H^{s,*}_v(\la v \ra^\ell)}$, this implies that, for some constant $C_1>0$ independent of $\eps$,
\begin{equation}\label{eq:Phif-Phig_wholespace}
\| \Phi[f^\eps] - \Phi[g^\eps] \|_{\mathscr{Y}}
\le C_1 ( \| f \|_{\mathscr{Y}} + \| g \|_{\mathscr{Y}}) \| f-g \|_{\mathscr{Y}}.
\end{equation}

As a consequence of estimates \eqref{eq:Phif_wholespace}--\eqref{eq:Phif-Phig_wholespace} we can construct a global solution $f^\eps \in \mathscr{Y}$ to the equation~\eqref{eq:feps} if $\eta_0>0$ is small enough by arguing as in Section~\ref{sec:proof_theo_main1_existence}. This completes the proof of global existence in Theorem~\ref{theo:boltzmann}--(2) together with estimate~\eqref{eq:theo1bis:existence}.

\subsubsection{Uniqueness}\label{sec:proof_theo_main1bis_uniqueness}

Using the above estimates, we can argue as in Section~\ref{sec:proof_theo_main1_uniqueness}.

\subsubsection{Decay for hard potentials}\label{sec:proof_theo_main1bis_decay_hard}
Let $f^\eps$ be the solution to \eqref{eq:feps} constructed in Theorem~\ref{theo:boltzmann}--(2) associated to the initial data $f^\eps_0$, and let $0 < \vartheta < \frac{3}{2}(1-\frac{1}{p})$. Arguing as above, using Proposition~\ref{prop:estimate_Ueps_decay_hard_wholespace} and Proposition~\ref{prop:estimate_Ueps_regularization_hard_wholespace} we obtain
\begin{equation*}
\begin{aligned}
\| \mathrm{p}_{\vartheta} \widehat f^\eps \|_{L^1_\xi L^\infty_t L^2_v} 
&+ \frac{1}{\eps} \|  \mathrm{p}_{\vartheta} \P^\perp \widehat f^\eps \|_{L^1_\xi L^2_t H^{s,*}_v} 
+ \left\| \mathrm{p}_{\vartheta} \frac{|\xi|}{\la \xi \ra} \P \widehat f^\eps \right\|_{L^1_\xi L^2_t L^2_v} \\
&\lesssim  \| \widehat f^\eps_0 \|_{L^1_\xi L^2_v }
+ \|  \widehat f^\eps \|_{L^p_\xi L^\infty_t L^2_v} 
+ \| \mathrm{p}_{\vartheta}  \widehat \Gamma (f^\eps , f^\eps) \|_{L^1_\xi L^2_t (H^{s,*}_v)'}. 
\end{aligned}
\end{equation*}
Thanks to Lemma~\ref{lem:nonlinear} we have
$$
\| \mathrm{p}_{\vartheta}  \widehat \Gamma (f^\eps , f^\eps) \|_{L^1_\xi L^2_t (H^{s,*}_v)'}
\lesssim \| \mathrm{p}_{\vartheta} \widehat f^\eps \|_{L^1_\xi L^\infty_t L^2_v} \| \widehat f^\eps \|_{L^1_\xi L^2_t H^{s,*}_v},
$$
and by \eqref{eq:fPf_Hs*v} we have
$$
\begin{aligned}
\| \widehat f^\eps \|_{L^1_\xi L^2_t H^{s,*}_v}
&\lesssim 
\| \P^\perp \widehat f^\eps \|_{L^1_\xi L^2_t H^{s,*}_v}
+ \left\|  \frac{|\xi|}{\la \xi \ra}\widehat \P f^\eps \right\|_{L^1_\xi L^2_t L^2_v} 
+ \left\|  \frac{|\xi|}{\la \xi \ra} \widehat \P f^\eps \right\|_{L^p_\xi L^2_t L^2_v} \\
&\lesssim \| \widehat f^\eps_0 \|_{L^1_\xi L^2_v } + \| \widehat f^\eps_0 \|_{L^p_\xi L^2_v },
\end{aligned}
$$
where we have used the estimate of Theorem~\ref{theo:boltzmann}--(2) in last line. Observing that we also have $\|  \widehat f^\eps \|_{L^p_\xi L^\infty_t L^2_v} \lesssim \| \widehat f^\eps_0 \|_{L^1_\xi L^2_v } + \| \widehat f^\eps_0 \|_{L^p_\xi L^2_v }$, it follows
\begin{equation*}
\begin{aligned}
\| \mathrm{p}_{\vartheta} \widehat f^\eps \|_{L^1_\xi L^\infty_t L^2_v} 
&+ \frac{1}{\eps} \|  \mathrm{p}_{\vartheta} \P^\perp \widehat f^\eps \|_{L^1_\xi L^2_t H^{s,*}_v} 
+ \left\| \mathrm{p}_{\vartheta} \frac{|\xi|}{\la \xi \ra} \P \widehat f^\eps \right\|_{L^1_\xi L^2_t L^2_v} \\
&\lesssim  \| \widehat f^\eps_0 \|_{L^1_\xi L^2_v }
+ \| \widehat f^\eps_0 \|_{L^p_\xi L^2_v } 
+ \| \mathrm{p}_{\vartheta} \widehat f^\eps \|_{L^1_\xi L^\infty_t L^2_v} \left( \| \widehat f^\eps_0 \|_{L^1_\xi L^2_v }
+ \| \widehat f^\eps_0 \|_{L^p_\xi L^2_v } \right). 
\end{aligned}
\end{equation*}
Since $\| \widehat f^\eps_0 \|_{L^1_\xi L^2_v } + \| \widehat f^\eps_0 \|_{L^p_\xi L^2_v } \le \eta_0$ is small enough, the last term in the right-hand side can be absorbed into the left-hand side, which thus concludes the proof of the decay estimate~\eqref{eq:theo1bis:decay_hard} in Theorem~\ref{theo:boltzmann}--(2).

\subsubsection{Decay for soft potentials}\label{sec:proof_theo_main1bis_decay_soft}
Let $0 < \vartheta < \frac{3}{2}(1-\frac{1}{p})$.
Let $f^\eps$ be the solution to \eqref{eq:feps} constructed in Theorem~\ref{theo:boltzmann}--(2) associated to the initial data $f^\eps_0$ with $\ell > \vartheta |\gamma+2s|$. Arguing as above, using Proposition~\ref{prop:estimate_Ueps_decay_soft_wholespace} and Proposition~\ref{prop:estimate_Ueps_regularization_soft_wholespace} we obtain
\begin{equation*}
\begin{aligned}
\| \mathrm{p}_{\vartheta} \widehat f^\eps \|_{L^1_\xi L^\infty_t L^2_v} 
&+ \frac{1}{\eps} \|  \mathrm{p}_{\vartheta} \P^\perp \widehat f^\eps \|_{L^1_\xi L^2_t H^{s,*}_v} 
+ \left\| \mathrm{p}_{\vartheta} \frac{|\xi|}{\la \xi \ra} \P \widehat f^\eps \right\|_{L^1_\xi L^2_t L^2_v} \\
&\lesssim  \| \widehat f^\eps_0 \|_{L^1_\xi L^2_v}
+ \|  \widehat f^\eps \|_{L^1_\xi L^\infty_t L^2_v (\la v \ra^\ell)} 
+ \|  \widehat f^\eps \|_{L^p_\xi L^\infty_t L^2_v} 
+ \| \mathrm{p}_{\vartheta} \widehat \Gamma (f^\eps , f^\eps) \|_{L^1_\xi L^2_t (H^{s,*}_v)'}. 
\end{aligned}
\end{equation*}
For the nonlinear term above, we argue as in Section~\ref{sec:proof_theo_main1bis_decay_hard} so that
$$
\begin{aligned}
&\| \mathrm{p}_{\vartheta}  \widehat \Gamma (f^\eps , f^\eps) \|_{L^1_\xi L^2_t (H^{s,*}_v)'} \\
&\quad
\lesssim \| \mathrm{p}_{\vartheta} \widehat f^\eps \|_{L^1_\xi L^\infty_t L^2_v} \left( 
\| \P^\perp \widehat f^\eps \|_{L^1_\xi L^2_t H^{s,*}_v}
+ \left\|  \frac{|\xi|}{\la \xi \ra}\widehat \P f^\eps \right\|_{L^1_\xi L^2_t L^2_v} 
+ \left\|  \frac{|\xi|}{\la \xi \ra} \widehat \P f^\eps \right\|_{L^p_\xi L^2_t L^2_v}  \right).
\end{aligned}
$$
Therefore, using the estimate of Theorem~\ref{theo:boltzmann}--(2), we obtain
\begin{equation*}
\begin{aligned}
&\| \mathrm{p}_{\vartheta} \widehat f^\eps \|_{L^1_\xi L^\infty_t L^2_v} 
+ \frac{1}{\eps} \|  \mathrm{p}_{\vartheta} \P^\perp \widehat f^\eps \|_{L^1_\xi L^2_t H^{s,*}_v} 
+ \left\| \mathrm{p}_{\vartheta} \frac{|\xi|}{\la \xi \ra} \P \widehat f^\eps \right\|_{L^1_\xi L^2_t L^2_v} \\
&\qquad
\lesssim  \| \widehat f^\eps_0 \|_{L^1_\xi L^2_v (\la v \ra^\ell)}
+ \| \widehat f^\eps_0 \|_{L^p_\xi L^2_v (\la v \ra^\ell)} 
+ \| \mathrm{p}_{\vartheta} \widehat f^\eps \|_{L^1_\xi L^\infty_t L^2_v} \left( \| \widehat f^\eps_0 \|_{L^1_\xi L^2_v (\la v \ra^\ell)}
+ \| \widehat f^\eps_0 \|_{L^p_\xi L^2_v (\la v \ra^\ell)} \right). 
\end{aligned}
\end{equation*}
Since $\| \widehat f^\eps_0 \|_{L^1_\xi L^2_v (\la v \ra^\ell)} + \| \widehat f^\eps_0 \|_{L^p_\xi L^2_v (\la v \ra^\ell)} \le \eta_0$ is small enough, the last term in the right-hand side can be absorbed into the left-hand side, which thus concludes the proof of the decay estimate~\eqref{eq:theo1bis:decay_soft} in Theorem~\ref{theo:boltzmann}--(2).

\section{Well-posedness for the Navier-Stokes-Fourier system}\label{section Navier-Stokes-Fourier}

We start by considering the incompressible Navier-Stokes equation, that is, the first equation in \eqref{Navier-Stokes-Fourier system}.
We denote by $V$ the semigroup associated to the operator $\nu_1 \Delta_x$, and we also denote, for all $t \ge 0$ and $\xi \in \Omega'_\xi$,
$$
\widehat V (t,\xi) = \FF_{x} (V(t) \FF^{-1}_{x}) (\xi) = e^{- \nu_1 |\xi|^2 t}. 
$$
We shall obtain below boundedness and integrated-in-time regularization estimates for $V$ as well as for its integral in time against a source $\int_0^t V(t-\tau) S(\tau) \, \d \tau$.

\begin{prop}\label{prop:estimate_V}
Let $p \in [1, \infty]$. Let $u_0 \in \FF^{-1}_x (L^p_\xi)$ and suppose moreover that $u_0$ is mean-free in the torus case $\Omega_x = \T^3$.
Then \begin{align*}
\Vert    \widehat{V}(\cdot) \widehat u_0   \Vert_{L^p_\xi L^\infty_t} + \Vert   |\xi| \widehat{V}(\cdot) \widehat u_0  \Vert_{L^p_\xi L^2_t}  \lesssim \Vert    \widehat u_0  \Vert_{L^p_\xi},
\end{align*}
and moreover $V(t)u_0$ also is mean-free for all $t \ge 0$ in the torus case (that is it satisfies \eqref{eq:meanfree}).
\end{prop}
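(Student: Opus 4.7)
The plan is to leverage the explicit form $\widehat V(t,\xi) = e^{-\nu_1 |\xi|^2 t}$ of the Fourier multiplier, which reduces the proposition to a frequency-by-frequency computation followed by an application of the $L^p_\xi$ norm; there is no real analytical difficulty beyond elementary bounds on this Gaussian-in-time family.

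First, I would handle the $L^p_\xi L^\infty_t$ term: since $0 \le e^{-\nu_1 |\xi|^2 t} \le 1$ for every $t \ge 0$ and every $\xi \in \Omega'_\xi$, the pointwise (in $\xi$) bound $\sup_{t \ge 0} |\widehat V(t,\xi) \widehat u_0(\xi)| \le |\widehat u_0(\xi)|$ is immediate, and taking the $L^p_\xi$ norm yields the first half of the claimed estimate.

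Next, for the regularization term, I would compute explicitly, for $\xi \neq 0$,
$$
\int_0^\infty |\xi|^2 e^{-2\nu_1 |\xi|^2 t}\, \d t = \frac{1}{2\nu_1},
$$
which gives $\| |\xi| \widehat V(\cdot,\xi) \widehat u_0(\xi) \|_{L^2_t} = (2\nu_1)^{-1/2} |\widehat u_0(\xi)|$; the frequency $\xi = 0$ is handled automatically by the factor $|\xi|$, so no separate argument is needed there (in particular the estimate does not require the mean-free hypothesis). Taking the $L^p_\xi$ norm produces the second half of the estimate, with the constant $(2\nu_1)^{-1/2}$ absorbed in $\lesssim$.

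Finally, the preservation of the mean-free property in the torus case follows instantly from $\widehat V(t,0) = 1$: if $\widehat u_0(0) = 0$, which is equivalent to \eqref{eq:meanfree} for $u_0$, then $\widehat{V(t) u_0}(0) = \widehat V(t,0) \widehat u_0(0) = 0$ for every $t \ge 0$, so $V(t) u_0$ remains mean-free. I do not expect any step to be an obstacle — the entire argument is a direct semigroup computation on the Fourier side, and the genuinely interesting work is in the companion Proposition~\ref{prop:estimate_V_regularization} that treats the Duhamel integral against a source.
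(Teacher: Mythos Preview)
Your proof is correct and follows essentially the same approach as the paper: a frequency-by-frequency computation using the explicit multiplier $\widehat V(t,\xi)=e^{-\nu_1|\xi|^2 t}$, followed by the $L^p_\xi$ norm. The only cosmetic difference is that the paper phrases the pointwise bound as an energy estimate obtained from $\partial_t |\widehat u(t,\xi)|^2 + 2\nu_1|\xi|^2|\widehat u(t,\xi)|^2=0$, whereas you compute the time integral $\int_0^\infty |\xi|^2 e^{-2\nu_1|\xi|^2 t}\,\d t$ directly; the content is identical.
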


\begin{rem}\label{rem:prop:estimate_V}
Observe that, in the torus case $\Omega_x = \T^3$, one can replace $ |\xi| \widehat{V}(\cdot) \widehat u_0  $ in above estimate by $ \la \xi \ra \widehat{V}(\cdot) \widehat u_0 $ since $V(t)u_0$ is mean-free.
\end{rem}

\begin{proof}
Let $u(t) = V(t) u_0$, which satisfies
$$
\partial_t u = - \nu_1 \Delta_x u, \quad u_{|t=0} = u_0.
$$
We already observe that, in the torus case, the solution $u(t)$ is also mean-free, that is satisfies \eqref{eq:meanfree}. For all $\xi \in \Omega'_\xi$ we thus have
$$
\partial_t \widehat u(t,\xi) = - \nu_1 |\xi|^2 \widehat u(t,\xi), \quad \widehat u(\xi)_{|t=0} = \widehat u_0(\xi),
$$
thus for any $t \ge 0$ we have
\[
|\widehat{u}(t, \xi)|^2   +\int_0^t  |\xi|^2| \widehat{u}(\tau, \xi) |^2 \, \d \tau \lesssim |\widehat{u}_0(\xi)|^2.
\]
Taking the supremum in time and then taking the square-root of previous estimate yields
\[
\Vert  \widehat{ u } (\xi) \Vert_{L^\infty_t} +  \Vert | \xi | \widehat{ u } (\xi) \Vert_{L^2_t}   \lesssim  |\widehat{u}_0(\xi)|,
\]
and we conclude the proof by taking the $L^p_\xi$ norm.
\end{proof}

\begin{prop}\label{prop:estimate_V_regularization}
Suppose $p \in [1, \infty]$. Let $S=S(t, \xi)$ satisfies $   \langle \xi\rangle^{-1} \widehat{S}  \in L^p_\xi L^2_t $ and \eqref{eq:meanfree} in the torus case $\Omega_x = \T^3$, and 
$ |\xi|^{-1} \widehat{S} \in L^p_\xi L^2_t$  in the whole space case $\Omega_x = \R^3$. Denote
\[
u_S(t) =\int_0^t V(t-\tau) S(\tau) \, \d \tau.
\]
Then in the torus case we have
\begin{align*}
\Vert    \widehat{u}_S  \Vert_{L^p_\xi L^\infty_t} + \Vert   \la \xi \ra \widehat{u}_S  \Vert_{L^p_\xi L^2_t}  \lesssim \Vert   \la \xi \ra^{-1} \widehat{S}  \Vert_{L^p_\xi L^2_t}.
\end{align*}
and in the whole space case
\begin{align*}
\Vert    \widehat{u}_S  \Vert_{L^p_\xi L^\infty_t} + \Vert   |\xi| \widehat{u}_S  \Vert_{L^p_\xi L^2_t}  \lesssim \Vert   |\xi|^{-1} \widehat{S}  \Vert_{L^p_\xi L^2_t}.
\end{align*}

\end{prop}

\begin{proof}
We first observe that $u_S$ satisfies
\[
\partial_t u_S + \nu_1 \Delta_x u_S =S, \quad {u_S}_{|t=0} = 0.
\]
We only prove the whole space case, the case of the torus being similar by observing that $u_S$ is mean-free, that is verifies \eqref{eq:meanfree}.

For all $\xi \in \R^3$ and all $t \ge 0$ we have
$$
\partial_t \widehat u_S(t,\xi) + \nu_1 |\xi|^2 \widehat u_S(t,\xi) = \widehat S(t,\xi), \quad \widehat {u_S(\xi)}_{|t=0} = 0.
$$
We can compute
\[
\partial_t \frac 1 2 |\widehat{u}_S (t, \xi)|^2   + \nu_1 |\xi|^2| \widehat{u}_S( \xi) |^2  \le (\widehat{S} (\xi), \widehat{u}_S (\xi)),
\]
which implies, for all $t \ge 0$,
\[
|\widehat{u}_S(t, \xi)|^2   +  \int_0^t |\xi|^2| \widehat{u}_S( \tau, \xi) |^2 \, \d \tau \lesssim  \int_0^t ||\xi|^{-1}  S(\tau,\xi)|^2 \, \d \tau.
\]
Taking the supremum in time, then taking the square-root of the estimate, and taking the $L^P_\xi$ norm, the proof is thus finished. 
\end{proof}

We now obtain bilinear estimates for the operator $Q_{\mathrm{NS}}$ defined in \eqref{eq:def:QNS}.

\begin{lem}\label{lem:estimate_QNS}
Let $p \in [1,\infty]$. Let $u, v \in \FF^{-1}_x (L^1_\xi L^\infty_t  \cap L^p_\xi L^\infty_t)$, then
\begin{equation}
\label{estimate Q NS}
 \|   |\xi|^{-1} \widehat{Q}_{\mathrm{NS}} (v, u) \|_{L^p_\xi L^2_t}   \lesssim  \| v \|_{L^p_\xi L^2_t}  \|  u  \|_{L^1_{\xi} L^\infty_t} ,
\end{equation}
and also
\begin{equation}
\label{estimate Q NS-2}
 \|   |\xi|^{-1} \widehat{Q}_{\mathrm{NS}} (v, u) \|_{L^p_\xi L^2_t}   \lesssim  \| v \|_{L^p_\xi L^\infty_t}  \|  u  \|_{L^1_{\xi} L^2_t} .
\end{equation}
\end{lem}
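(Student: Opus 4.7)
The plan is to unwind the definition of $Q_{\mathrm{NS}}$ in Fourier, extract a factor of $|\xi|$ coming from the divergence (which cancels the $|\xi|^{-1}$ on the left), use that the Leray projector is bounded pointwise in Fourier, and then reduce to Young's convolution inequality on the spatial frequency variable.

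More precisely, from the definition
\[
Q_{\mathrm{NS}}(v,u) = -\tfrac{1}{2}\mathbb{P}\bigl(\mathrm{Div}(v\otimes u) + \mathrm{Div}(u\otimes v)\bigr),
\]
I would take the Fourier transform. Since $\mathrm{Div}(v\otimes u)^j = \sum_k \partial_k(v^j u^k)$, its Fourier symbol in $\xi$ is (up to a bounded constant) $\mathrm{i}\xi_k$ acting on $\widehat{v^j u^k}(\xi)$, and $\widehat{v^j u^k}(\xi) = (\widehat{v^j}*\widehat{u^k})(\xi)$ (with the appropriate convolution on $\Omega'_\xi$, understood as a sum in the torus case and an integral in the whole space). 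Since the Fourier multiplier associated to $\mathbb{P}$ is bounded by a universal constant componentwise, I obtain the pointwise-in-$\xi$ majorization
\[
\bigl|\widehat{Q_{\mathrm{NS}}(v,u)}(\xi)\bigr| \lesssim |\xi|\int_{\Omega'_\eta} |\widehat v(\xi-\eta)|\,|\widehat u(\eta)|\,\mathrm d\eta,
\]
so that, dividing by $|\xi|$,
\[
\bigl| |\xi|^{-1}\widehat{Q_{\mathrm{NS}}(v,u)}(\xi)\bigr| \lesssim \int_{\Omega'_\eta}|\widehat v(\xi-\eta)|\,|\widehat u(\eta)|\,\mathrm d\eta.
\]

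To prove \eqref{estimate Q NS}, I would take the $L^2_t$ norm and apply Minkowski's inequality to pass it inside the $\eta$-integral, estimating $|\widehat v(\xi-\eta)|\,|\widehat u(\eta)|$ by $\|\widehat v(\xi-\eta)\|_{L^2_t}\|\widehat u(\eta)\|_{L^\infty_t}$ via Hölder in time. This gives
\[
\bigl\||\xi|^{-1}\widehat{Q_{\mathrm{NS}}(v,u)}(\xi)\bigr\|_{L^2_t} \lesssim \int_{\Omega'_\eta}\|\widehat v(\xi-\eta)\|_{L^2_t}\|\widehat u(\eta)\|_{L^\infty_t}\,\mathrm d\eta.
\]
The right-hand side is a convolution in $\xi$ of the function $\eta\mapsto \|\widehat v(\eta)\|_{L^2_t}\in L^p_\eta$ with $\eta\mapsto \|\widehat u(\eta)\|_{L^\infty_t}\in L^1_\eta$. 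Young's convolution inequality $L^p*L^1\hookrightarrow L^p$ then yields
\[
\bigl\||\xi|^{-1}\widehat{Q_{\mathrm{NS}}(v,u)}\bigr\|_{L^p_\xi L^2_t}\lesssim \|\widehat v\|_{L^p_\xi L^2_t}\,\|\widehat u\|_{L^1_\xi L^\infty_t},
\]
which is exactly \eqref{estimate Q NS}. For \eqref{estimate Q NS-2}, I would repeat the argument but exchange the roles of $v$ and $u$ in the Hölder step in time, placing $\widehat v$ in $L^\infty_t$ and $\widehat u$ in $L^2_t$; Young's inequality then gives the symmetric bound.

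There is essentially no obstacle here: the estimate is of the same flavor as Lemma~\ref{lem:nonlinear}, and once the divergence has been used to absorb the $|\xi|^{-1}$ prefactor the argument reduces to Minkowski plus Young. The only small point to check is that the Leray projector $\mathbb{P}$ is indeed a bounded Fourier multiplier componentwise, which is immediate from its explicit Fourier formula given in the introduction.
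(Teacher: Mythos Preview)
Your proof is correct and follows essentially the same approach as the paper: obtain the pointwise Fourier bound $|\widehat{Q}_{\mathrm{NS}}(v,u)(\xi)|\lesssim |\xi|\,(|\widehat v|*|\widehat u|)(\xi)$ using the divergence structure and boundedness of $\mathbb{P}$, then apply Minkowski and H\"older in time followed by Young's convolution inequality in $\xi$, swapping the roles of $u$ and $v$ in the H\"older step to get the second estimate.
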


\begin{proof}
From the definition of $Q_{\mathrm{NS}}$, we first observe that for all $\xi \in \Omega'_\xi$ and $j \in \{ 1 , 2 , 3 \}$ we have
$$
\begin{aligned}
\widehat Q_{\mathrm{NS}} (v,u)^j (\xi) 
&= -\frac12 \sum_{k=1}^3 \mathrm{i} \xi_k \left\{   \FF_x (v^j u^k) (\xi) - \frac{1}{|\xi|^2} \sum_{l=1}^3  \xi_j \xi_l \FF_x (v^l u^k) (\xi)  \right\} \\ 
&\quad 
-\frac12 \sum_{k=1}^3 \mathrm{i} \xi_k \left\{   \FF_x (u^j v^k) (\xi) - \frac{1}{|\xi|^2} \sum_{l=1}^3  \xi_j \xi_l \FF_x (u^l v^k) (\xi)  \right\} . 
\end{aligned}
$$
We obtain
$$
|\widehat Q_{\mathrm{NS}} (v,u) (\xi)| \lesssim |\xi| \int_{\Omega'_\eta} |\widehat v (\eta)|  |\widehat u (\xi - \eta)| \, \d \eta ,
$$
thus by Minkowski's inequality and then H\"older's inequality
$$
\begin{aligned}
\| |\xi|^{-1} \widehat Q_{\mathrm{NS}} (v,u) (\xi) \|_{L^2_t}
&\lesssim  \int_{\Omega'_\eta} \left( \int_0^\infty |\widehat v (t,\eta)|^2  |\widehat u (t,\xi - \eta)|^2 \, \d t \right)^{1/2} \d \eta \\
&\lesssim  \int_{\Omega'_\eta}  \| \widehat v (\eta) \|_{L^2_t}  \|\widehat u (\xi - \eta) \|_{L^\infty_t} \, \d \eta .
\end{aligned}
$$
We then conclude the proof of~\eqref{estimate Q NS} by taking the $L^p_\xi$ norm above and applying Young's convolution inequality. The proof of~\eqref{estimate Q NS-2} can be obtained in a similar way, by exchanging the role of $u$ and $v$ when applying H\"older's inequality with respect to the time variable.
\end{proof}

\subsection{Global existence in the torus $\Omega_x=\T^3$}\label{sec:nsf_torus}

We shall construct mild solutions to the first equation in \eqref{Navier-Stokes-Fourier system}, namely
\begin{equation}\label{eq:NS_u_mild}
u(t) = V(t) u_0 + \int_0^t V(t-\tau) Q_{\mathrm{NS}} (u(\tau) , u(\tau)) \, \d \tau.
\end{equation}
We define the space
$$
\mathscr{X} = \left\{ u \in \FF^{-1}_x(L^1_\xi L^\infty_t \cap L^1_\xi (\la \xi \ra) L^2_t) \mid u \text{ satisfies } \eqref{eq:meanfree}, \; \| u \|_{\mathscr{X}} <\infty \right\},
$$
with
$$
\| u \|_{\mathscr{X}}
:= \| \widehat u \|_{L^1_\xi L^\infty_t} +  \|   \la \xi \ra \widehat u \|_{L^1_\xi L^2_t}.
$$
Let $u_0 \in \FF^{-1}_x (L^1_\xi)$ be mean-free and
\[
\| \widehat {u}_0 \|_{L^1_\xi} \le \eta_1.
\]
Consider the map $\Phi : \mathscr{X} \to \mathscr{X}$, $u \mapsto \Phi[u]$ defined by, for all $t \ge 0$ ,
\begin{equation}\label{eq:Phiu}
\Phi[u](t) = V (t) u_0 + \int_0^t  V (t-\tau)  Q_{\mathrm{NS}}  ( u(\tau) , u(\tau)) \, \d \tau.
\end{equation}
thus, for all $\xi \in \Z^3$,
\begin{equation}\label{eq:Phiu_fourier}
\widehat\Phi[u] (t,\xi) = \widehat V(t,\xi) \widehat u_0(\xi) + \int_0^t \widehat V (t-\tau,\xi) \widehat Q_{\mathrm{NS}}  ( u(\tau) , u(\tau)) (\xi) \, \d \tau.
\end{equation}
For the first term we have from Proposition~\ref{prop:estimate_V} that
\[
\Vert \widehat V(t,\xi) \widehat u_0(\xi)  \Vert_{\mathscr{X}} \le C_0 \Vert \widehat{u}_0 \Vert_{L^1_\xi} ,
\]
and by Proposition~\ref{prop:estimate_V_regularization} we have
\begin{align*}
\left \Vert \int_0^t \widehat V (t-\tau,\xi) \widehat Q_{\mathrm{NS}}  ( u(\tau) , u(\tau)) (\xi) \, \d \tau \right\Vert_{\mathscr{X}} 
&\lesssim   \Vert   |\xi|^{-1} \widehat Q_{\mathrm{NS}}  ( u , u) \Vert_{L^1_\xi L^2_t}  \\
&\lesssim   \| \widehat u \|_{L^1_\xi L^2_t} \| \widehat u \|_{L^1_\xi L^\infty_t} \\
&\lesssim   \| \la \xi \ra \widehat u \|_{L^1_\xi L^2_t} \| \widehat u \|_{L^1_\xi L^\infty_t} \\
&\lesssim \Vert u \Vert_{\mathscr{X}}^2,
\end{align*}
where we have used Lemma~\ref{lem:estimate_QNS}. Thus we obtain 
\[
\Vert \Phi [u] \Vert_{\mathscr{X}} \lesssim C_0 \Vert \widehat{u}_0 \Vert_{L^1_\xi} + C_1 \Vert u \Vert_{\mathscr{X}}^2.
\]
Moreover for $u, v \in \mathscr{X}$ we can also compute, using again Proposition~\ref{prop:estimate_V_regularization} and Lemma~\ref{lem:estimate_QNS}, that
\begin{align*}
&\left \Vert \int_0^t \widehat V (t-\tau,\xi) \widehat Q_{\mathrm{NS}}  ( (u-v)(\tau) , v(\tau)) (\xi) \, \d \tau \right\Vert_{\mathscr{X}}  + \left \Vert \int_0^t \widehat V (t-\tau,\xi) \widehat Q_{\mathrm{NS}}  ( u(\tau) , (u-v)(\tau)) (\xi) \, \d \tau \right\Vert_{\mathscr{X}}
\\
&\qquad 
\lesssim  \Vert   |\xi|^{-1} \widehat Q_{\mathrm{NS}}  ( u-v,v)  \Vert_{L^1_\xi L^2_t}   + \Vert   |\xi|^{-1} \widehat Q_{\mathrm{NS}}  ( u , u-v)  \Vert_{L^1_\xi L^2_t}  
\\
&\qquad 
\lesssim 
\| \widehat u - \widehat v \|_{L^1_\xi L^\infty_t} \| \widehat v \|_{L^1_\xi L^2_t}
+  \| \widehat u \|_{L^1_\xi L^2_t} \| \widehat u - \widehat v \|_{L^1_\xi L^\infty_t}.
\end{align*}
Therefore there is $C_1>0$ such that
$$
\| \Phi[u] - \Phi[v] \|_{\mathscr X} \le C_1 ( \Vert u \Vert_{\mathscr{X}} + \Vert v \Vert_{\mathscr{X}}) \Vert u-v \Vert_{\mathscr{X}}.
$$

Gathering the two inequalities and arguing as in Sections~\ref{sec:proof_theo_main1_existence} and \ref{sec:proof_theo_main1_uniqueness}, we can construct a global unique solution $u \in \mathscr{X}$ to the equation \eqref{eq:NS_u_mild} if $\eta_1 >0$ is small enough, which moreover satisfies
\[
\Vert u \Vert_{\mathscr{X}} \lesssim \Vert \widehat{u}_0 \Vert_{L^1_\xi} .
\]
Once $u$ have been constructed, we can then argue in a similar and even simpler way in order to construct a global unique mild solution $\theta$ for the second equations in \eqref{Navier-Stokes-Fourier system} if $\eta_1 >0$ is small enough, namely
$$
\theta(t) = \overline V(t) \theta_0 + \int_0^t \overline V (t-\tau) \left[- \Div_x (u(\tau) \theta (\tau) ) \right]  \d \tau ,
$$
where $\overline V$ denotes the semigroup associated to the operator $\nu_2 \Delta_x$, and which satisfies moreover
\[
\Vert \theta \Vert_{\mathscr{X}} \lesssim \Vert \widehat{u}_0 \Vert_{L^1_\xi} + \Vert \widehat{\theta}_0 \Vert_{L^1_\xi} .
\]
We finally obtain the solution $\rho$ by using the last equation in \eqref{Navier-Stokes-Fourier system} and observing that we consider mean-free solutions, so that $\widehat \rho(t,0) = \widehat \theta(t,0) = 0$. This completes the proof of Theorem~\ref{theo:NSF}--(1).

\subsection{Global existence in the whole space $\Omega_x=\R^3$}
Similarly as before we define the space, recalling that $p \in (3/2,+\infty]$,
\[
\mathscr{Y} = \left\{ u \in \FF^{-1}_x(L^1_\xi L^\infty_t  \cap L^1_\xi(|\xi|) L^2_t ) \cap  \FF^{-1}_x(L^p_\xi L^\infty_t  \cap L^p_\xi(|\xi|) L^2_t )  \mid \| u \|_{\mathscr{Y}} <\infty \right\},
\]
with
\[
\| u \|_{\mathscr{Y}} := \| \widehat u \|_{L^1_\xi L^\infty_t} +  \|   |\xi |\widehat u \|_{L^1_\xi L^2_t} +  \| \widehat u \|_{L^p_\xi L^\infty_t } + \|   |\xi |\widehat u \|_{L^p_\xi L^2_t} .
\]
Let $u_0 \in \FF^{-1}_x (L^1_\xi \cap L^p_\xi)$ satisfy
\[
\| \widehat {u}_0 \|_{L^1_\xi} + \| \widehat {u}_0 \|_{L^p_\xi} \le \eta_1,
\]
and consider the map $\Phi : \mathscr{Y} \to \mathscr{Y}$, $u \mapsto \Phi[u]$ defined by \eqref{eq:Phiu}, in particular \eqref{eq:Phiu_fourier} is verified for all $\xi \in \R^3$.

For the first term in \eqref{eq:Phiu_fourier} we have from Proposition~\ref{prop:estimate_V} that
\[
\Vert \widehat V(t,\xi) \widehat u_0(\xi)  \Vert_{\mathscr{Y}} \le C_0 (\Vert \widehat{u}_0 \Vert_{L^1_\xi} +\Vert \widehat{u}_0 \Vert_{L^p_\xi}).
\]
Furthermore, by Proposition~\ref{prop:estimate_V_regularization} we have
\begin{align*}
\left \Vert \int_0^t \widehat V (t-\tau,\xi) \widehat Q_{\mathrm{NS}}  ( u(\tau) , u(\tau)) (\xi) \, \d \tau \right\Vert_{\mathscr{Y}} 
&\lesssim   \Vert   |\xi|^{-1} \widehat Q_{\mathrm{NS}}  ( u , u) \Vert_{L^1_\xi L^2_t} +  \Vert   |\xi|^{-1} \widehat Q_{\mathrm{NS}}  ( u , u) \Vert_{L^p_\xi L^2_t}  \\
&\lesssim   \| \widehat u \|_{L^1_\xi L^2_t} \left( \| \widehat u \|_{L^1_\xi L^\infty_t} + \| \widehat u \|_{L^p_\xi L^\infty_t} \right).
\end{align*}
where we have used Lemma~\ref{lem:estimate_QNS}. We now observe that
$$
\| \widehat u \|_{L^1_\xi L^2_t} \lesssim \| \mathbf{1}_{|\xi|\ge 1} \widehat u \|_{L^1_\xi L^2_t} + \| \mathbf{1}_{|\xi| < 1}\widehat u \|_{L^1_\xi L^2_t},
$$
and for the first term we easily have
$$
\| \mathbf{1}_{|\xi|\ge 1} \widehat u \|_{L^1_\xi L^2_t} \lesssim \| |\xi| \widehat u \|_{L^1_\xi L^2_t}.
$$
For the second term we use H\"older's inequality to obtain
$$
\| \mathbf{1}_{|\xi| < 1}\widehat u \|_{L^1_\xi L^2_t}
\lesssim \| \mathbf{1}_{|\xi| < 1} |\xi|^{-1} \|_{L^{p'}_\xi} \| \mathbf{1}_{|\xi| < 1} |\xi| \widehat u \|_{L^p_\xi L^2_t}
\lesssim \| |\xi| \widehat u \|_{L^p_\xi L^2_t},
$$
where we have used that $\| \mathbf{1}_{|\xi| < 1} |\xi|^{-1} \|_{L^{p'}_\xi}<\infty$ since $p>3/2$. Therefore we get
\begin{equation}\label{eq:u_L1_Lp}
\| \widehat u \|_{L^1_\xi L^2_t} \lesssim \| |\xi| \widehat u \|_{L^1_\xi L^2_t} + \| |\xi| \widehat u \|_{L^p_\xi L^2_t}.
\end{equation}
Gathering previous estimates, we have hence obtained
\[
\Vert \Phi [u] \Vert_{\mathscr{Y}} \le C_0 \left( \Vert \widehat{u}_0 \Vert_{L^1_\xi} +  \Vert \widehat{u}_0 \Vert_{L^p_\xi} \right) + C_1 \Vert u \Vert_{\mathscr{Y}}^2.
\]
Moreover for $u, v \in \mathscr{Y}$ we can also compute, using again Proposition~\ref{prop:estimate_V_regularization} and Lemma~\ref{lem:estimate_QNS}, that
\begin{align*}
&\left \Vert \int_0^t \widehat V (t-\tau,\xi) \widehat Q_{\mathrm{NS}}  ( (u-v)(\tau) , v(\tau)) (\xi) \, \d \tau \right\Vert_{\mathscr{X}}  + \left \Vert \int_0^t \widehat V (t-\tau,\xi) \widehat Q_{\mathrm{NS}}  ( u(\tau) , (u-v)(\tau)) (\xi) \, \d \tau \right\Vert_{\mathscr{X}}
\\
&\qquad 
\lesssim  \Vert   |\xi|^{-1} \widehat Q_{\mathrm{NS}}  ( u-v,v)  \Vert_{L^1_\xi L^2_t}   + \Vert   |\xi|^{-1} \widehat Q_{\mathrm{NS}}  ( u , u-v)  \Vert_{L^1_\xi L^2_t} \\
&\qquad \quad 
+\Vert   |\xi|^{-1} \widehat Q_{\mathrm{NS}}  ( u-v,v)  \Vert_{L^p_\xi L^2_t}   + \Vert   |\xi|^{-1} \widehat Q_{\mathrm{NS}}  ( u , u-v)  \Vert_{L^p_\xi L^2_t}  
\\
&\qquad 
\lesssim 
\| \widehat u - \widehat v \|_{L^1_\xi L^\infty_t} \| \widehat v \|_{L^1_\xi L^2_t}
+  \| \widehat u \|_{L^1_\xi L^2_t} \| \widehat u - \widehat v \|_{L^1_\xi L^\infty_t}
+ \| \widehat u - \widehat v \|_{L^p_\xi L^\infty_t} \| \widehat v \|_{L^1_\xi L^2_t}
+  \| \widehat u \|_{L^1_\xi L^2_t} \| \widehat u - \widehat v \|_{L^p_\xi L^\infty_t}
\\
&\qquad 
\lesssim \left( \| \widehat u \|_{L^1_\xi L^2_t} + \| \widehat v \|_{L^1_\xi L^2_t} \right) \left( \| \widehat u - \widehat v \|_{L^1_\xi L^\infty_t}
 + \| \widehat u - \widehat v \|_{L^p_\xi L^\infty_t}
\right).
\end{align*}
Using inequality \eqref{eq:u_L1_Lp} we therefore get, for some constant $C_1>0$,
$$
\| \Phi[u] - \Phi[v] \|_{\mathscr Y} \le C_1 \left( \| u \|_{\mathscr Y} + \| v \|_{\mathscr Y}\right) \| u-v \|_{\mathscr Y}.
$$
Gathering these two inequalities together, the proof of Theorem~\ref{theo:NSF}--(2) is completed by arguing as in Section~\ref{sec:nsf_torus} above.

\section{Hydrodynamic limit}\label{section hydrodynamical limit}

Recalling that the semigroup $U^\eps$ is defined in \eqref{eq:def:Ueps}, and also $\widehat U^\eps$ in \eqref{eq:def:hatUeps}, we also define, for all $t \ge 0$,
\begin{equation}\label{definition Psi epsilon}
\Psi^\eps [f,g] (t) = \frac{1}{\eps} \int_0^t U^\eps (t-\tau) \Gamma_{\mathrm{sym}} ( f(\tau) , g(\tau)) \, \d \tau, 
\end{equation}
as well as its Fourier transform in space, for all $\xi \in \Omega'_\xi$,
\begin{equation}\label{definition Psi epsilon fourier}
\widehat \Psi^\eps [f,g] (t, \xi) = \frac{1}{\eps} \int_0^t \widehat U^\eps (t-\tau,\xi) \widehat \Gamma_{\mathrm{sym}} ( f(\tau) , g(\tau)) (\xi) \, \d \tau,
\end{equation}
where we recall that $\Gamma_{\mathrm{sym}}(f,g)$ is the symmetrized version of $\Gamma(f,g)$ defined in \eqref{eq:def:Gamma_sym}.

\subsection{Estimates on $\widehat U^\eps$}

We denote that $0 \le \chi \le 1$ is a fixed compactly supported function of $B_{1}$ equal to one on $B_{\frac 1 2} $, where $B_R$ is the ball with radius $R$ centered at zero. 

Arguing as in \cite{BU,GT} but using the spectral estimates of \cite{YY,YY2} for the non-cutoff Boltzmann equation, we then have:
\begin{lem}\label{basic property U epsilon}
There exist $\kappa>0$ such that one can write 
\[
U^\eps(t) = \sum_{j=1}^4 U_{j}^\eps  (t) +U^{\eps \#} (t),
\] 
with 
\[
\widehat{U}_j^\eps (t, \xi) : = \widehat{U}_j(\frac t {\eps^2} , \eps \xi), \quad \widehat{U}^{\eps \#} (t, \xi) = \widehat{U}
^{\#}  (\frac t {\eps^2} , \eps \xi),
\]
where we have the following properties:
\begin{enumerate}

\item For $1 \le j \le 4$, 
\[
\widehat{U}_j(t, \xi) =\chi(\frac {|\xi|} {\kappa})e^{t \lambda_j(\xi)}P_j(\xi),
\] 
with $\lambda_j$ satisfying 
\[
\lambda_j(\xi) = i \alpha_j(\xi) -\beta_j |\xi|^2 +\gamma_j(|\xi|),
\]
with
\[
\alpha_1>0,\quad \alpha_2<0, \quad \alpha_3=\alpha_4 =0,\quad \beta_j >0,
\]
and
\[
\gamma_j(|\xi|) = O(|\xi|^3) ,\quad \hbox{as} \quad \xi \to 0, \quad \gamma_j(\xi) \le \beta_j |\xi|^2 /2,\quad \forall |\xi| \le \kappa,
\]
as well as
\[
P_j(\xi) = P_j^0 (\tfrac{\xi} {|\xi|} ) + |\xi|P_j^1 (\tfrac{\xi} {|\xi|} ) +|\xi|^2 P_j^2(\xi),
\]
with $P_j^n$ bounded linear operators on $L^2_v$ with operator norms uniformly bounded for $|\xi| \le \kappa$.

\item We also have that the orthogonal projector $\P$ onto $\Ker L$ satisfies
\[
\P =\sum_{j=1}^4 P_j^0 (\tfrac {\xi} {|\xi|} ) .
\]
Moreover $ P_j^0 (\frac {\xi} {|\xi|} )$,  $P_j^1 (\frac {\xi} {|\xi|})$ and $P_j^2(\xi)$ are bounded from $L^2_v$ to $L^2_v(\langle v \rangle^l)$ uniformly in $|\xi| \le \kappa$ for all $l \ge 0$.

\item In the hard potentials case $\gamma + 2s \ge 0$, for all $t \ge 0$ and all $\xi \in \R^3$ there holds, for any $\ell \ge 0$,
\begin{equation}
\label{U hard estimate}
\Vert \widehat{U}^{\eps \#} (t,\xi) \widehat f(\xi) \Vert_{L^2_v (\langle v \rangle^\ell) } \le   Ce^{- \lambda_1 \frac {t} {\eps^2}} \Vert \widehat f(\xi) \Vert_{L^2_v (\langle v \rangle^\ell) },
\end{equation}
for any $f$ satisfying moreover \eqref{eq:normalization1} in the torus case, where $\lambda_1, C>0$ are independent of $t, \xi, \eps$.

\item In the soft potential case $\gamma + 2s < 0$, for all $t \ge 0$ and all $\xi \in \R^3$ there holds, for any $k, \ell \ge 0$,
\begin{equation}
\label{U soft estimate}
\Vert \widehat{U}^{\eps \#} (t,\xi) \mathbf{P}^\perp \widehat f(\xi)\Vert_{L^2_v (\langle v \rangle^k)} \le C \left( 1+ \frac{t}{\eps^2} \right)^{ -\frac{\ell}{|\gamma+2s|}} \Vert \widehat f(\xi) \Vert_{L^2_v (\langle v \rangle^{k+\ell})},
\end{equation}
for any $f$ satisfying moreover \eqref{eq:normalization1} in the torus case, where $C>0$ is independent of $t, \xi, \eps$.

\end{enumerate}

\end{lem}

\begin{proof}
The proof is the same as in \cite[Lemma 5.10]{CRT}. For the soft potentials case, we need to replace the use of \cite[Theorem 3.2 and Remark 5.2]{YY} in the proof by \cite[Theorem 1.1 and Section 4]{YY2}, in particular the decay estimate~\eqref{U soft estimate} comes from \cite[Equation~(2.46)]{YY2} and the fact that $B_0(\xi) \P^\perp =B(\xi) \P^\perp$, where $B_0(\xi)$ and $B(\xi)$ are defined in \cite[Equation~(1.18)]{YY2} and satisfy $B_0(\xi) =B(\xi) -\P$.
\end{proof}

Denoting 
\[
\widetilde{P}_j \left(\xi, \frac {\xi} {|\xi|} \right) := P^1_j \left(\frac {\xi} {|\xi|}\right) +\xi P^2_j(\xi),
\]
for $1 \le j \le 4$, we can further split $\widehat{U}_j^\eps $ into four parts (one main part and three remainder terms):
\begin{equation}\label{eq:decomposition_Ujeps}
U_j^\eps = U_{j0}^\eps +  U_{j0}^{\eps \#}  + U_{j1}^\eps + U_{j2}^\eps,
\end{equation}
where 
\begin{align*}
&\widehat{U}_{j0}^\eps (t, \xi) =e^{i \alpha_j |\xi| \frac t \eps -\beta_j t |\xi|^2} P_j^0\left(\frac {\xi} {|\xi|}\right),
\\
&\widehat{U}_{j0}^{\eps \#} (t, \xi) = \left( \chi   \left(\frac {\eps|\xi|} {\kappa} \right) -1 \right) e^{i \alpha_j |\xi| \frac t \eps -\beta_j t |\xi|^2} P_j^0\left(\frac {\xi} {|\xi|}\right),
\\
&\widehat{U}_{j1}^\eps (t, \xi) =\chi   \left(\frac {\eps|\xi|} {\kappa} \right)    e^{i \alpha_j |\xi| \frac t \eps -\beta_j t |\xi|^2} \left (e^{t \frac {\gamma_j(\eps |\xi|)  }     { \eps^2}  }   -1 \right)P_j^0 \left(\frac {\xi} {|\xi|}\right),
\\
&\widehat{U}_{j2}^\eps (t, \xi) =\chi   \left(\frac {\eps|\xi|} {\kappa} \right)    e^{i \alpha_j |\xi| \frac t \eps -\beta_j t |\xi|^2}   e^{t \frac {\gamma_j(\eps |\xi|)  }    { \eps^2}  }  \eps |\xi | \widetilde{P}_j \left(\eps \xi, \frac {\xi} {|\xi|} \right).
\end{align*}
In particular we observe that $\widehat U^\eps_{30}$ and $\widehat U^\eps_{40}$ are independent of $\eps$, so that we define
\begin{equation}\label{eq:def:hatUxi}
\widehat U(t,\xi) := \widehat U^\eps_{30}(t,\xi) + \widehat U^\eps_{40}(t,\xi),
\end{equation}
which is then independent of $\eps$. We finally define
\begin{equation}\label{eq:def:U(t)}
U(t) = \FF_x^{-1} \widehat U(t) \FF_x.
\end{equation}

We say that a function $f=f(x,v) \in \Ker L$ is well-prepared if
$$
f(x,v) = \left\{ \rho[f](x)  + u[f](x) \cdot v  + \theta[f](x) \frac{(|v|^2-3)}{2} \right\} \sqrt{\mu}(v)
$$
with 
$$
\nabla_x \cdot u[f] = 0 \quad \text{and} \quad \rho[f] + \theta[f] = 0,
$$
where we recall that $\rho[f], u[f], \theta[f]$ are defined in \eqref{eq:def:rho_u_theta}

\begin{lem}\label{Basic properties U t}(\cite{GT}, Proposition A.3) 
We have that $U(0)$ is the projection on the subset of $\Ker L$ consisting of functions $f$ that are well-prepared. 
We also have
\[
U(t)f = U(t) U(0) f, \quad \forall t \ge 0 ,
\]
and
\[
\nabla_x \cdot  u[f] =0 \quad \hbox{and} \quad \rho[f] + \theta[f] = 0 \quad\Rightarrow\quad P_j^0( \tfrac {\xi} {|\xi|} ) f = 0 , \quad j=1, 2. 
\]
\end{lem}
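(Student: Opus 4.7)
My plan is to derive the three claims from the explicit formula for $\widehat U(t,\xi)$ together with the idempotent relations satisfied by the spectral projectors $P_j^0$. From the definition \eqref{eq:def:hatUxi} combined with the decomposition \eqref{eq:decomposition_Ujeps} and the fact that $\alpha_3=\alpha_4=0$ (Lemma~\ref{basic property U epsilon}), I would first write
\[
\widehat U(t,\xi) = e^{-\beta_3 t|\xi|^2}P_3^0(\tfrac{\xi}{|\xi|}) + e^{-\beta_4 t|\xi|^2}P_4^0(\tfrac{\xi}{|\xi|}),
\]
so in particular $\widehat U(0,\xi)=P_3^0(\tfrac{\xi}{|\xi|})+P_4^0(\tfrac{\xi}{|\xi|})$. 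Since the $P_j^0$ are the leading-order Riesz projectors associated to the four distinct eigenvalue branches $\lambda_j$ of the limiting operator $L - iv\cdot\xi$ on $\Ker L$, they satisfy the orthogonality relations $P_j^0 P_k^0 = \delta_{jk} P_j^0$. Combining this with the previous display immediately yields $\widehat U(0,\xi)^2 = \widehat U(0,\xi)$ and $\widehat U(t,\xi)\widehat U(0,\xi) = \widehat U(t,\xi)$, which proves that $U(0)$ is a projection and that $U(t) f = U(t) U(0) f$ for all $t \ge 0$.

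To identify the range of $U(0)$ with $\{f \in \Ker L : \Div u_f = 0,\ \rho_f + \theta_f = 0\}$, and to establish $P_1^0 f = P_2^0 f = 0$ whenever $f$ satisfies these two constraints, I would invoke the perturbative spectral analysis of $L - iv\cdot\xi$ for small $\xi$ carried out in the non-cutoff case in \cite{YY} (see also \cite[Proposition A.3]{GT} for the statement in the form used here, and \cite{EP,BU} for the cutoff analogues). The five eigenvalue branches of $L-iv\cdot\xi$ restricted to $\Ker L$ split into two \emph{acoustic} modes ($j=1,2$) with $\lambda_j \sim \pm i\alpha_j|\xi|$ and $\alpha_j\ne 0$, whose leading eigenvectors are explicit linear combinations of $\sqrt\mu$, $(v\cdot\omega)\sqrt\mu$ and $|v|^2\sqrt\mu$ (with $\omega=\xi/|\xi|$) that mix all three of $\rho_f$, $\omega\cdot u_f$ and $\theta_f$; and two \emph{dissipative} branches ($j=3,4$) with $\lambda_j = O(|\xi|^2)$, corresponding respectively to the heat mode and to the (doubly degenerate) vorticity mode. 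A direct computation of the acoustic eigenvectors shows that they span exactly the complement, inside $\Ker L$, of the Boussinesq subspace cut out by $\omega\cdot u_f=0$ (the Fourier form of $\Div u_f=0$) and $\rho_f+\theta_f=0$; hence the range of $P_3^0+P_4^0$ coincides with that Boussinesq subspace. Writing then $\P f = \sum_{j=1}^4 P_j^0 f$ and using that $\P$ is the orthogonal projection on $\Ker L$, any $f$ satisfying the two constraints has vanishing components on the acoustic eigenvectors, giving $P_1^0 f = P_2^0 f = 0$.

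The main obstacle is the second step, namely the explicit perturbative diagonalization of the non-self-adjoint operator $L - iv\cdot\xi$ near $\xi=0$ in the non-cutoff setting. However, only the $O(1)$ part of the eigenvectors, i.e.\ their projection onto $\Ker L$, is needed here, and this part depends solely on the action of $v\cdot\omega$ between the five conserved quantities rather than on finer inversion properties of $L$ on $(\Ker L)^\perp$. Consequently the structural identification of the acoustic and dissipative eigenspaces is formally identical to the cutoff computation of \cite{EP,BU}, and one can quote the detailed spectral results of \cite{YY} to close the argument rigorously in our non-cutoff framework.
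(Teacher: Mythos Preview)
The paper does not give a proof of this lemma; it simply cites \cite[Proposition~A.3]{GT}. Your sketch is correct and is essentially the argument carried out in that reference (and in \cite{EP,BU,YY}): write $\widehat U(t,\xi)=e^{-\beta_3 t|\xi|^2}P_3^0+e^{-\beta_4 t|\xi|^2}P_4^0$, use the idempotent and mutual annihilation relations for the leading-order spectral projectors $P_j^0$, and identify the range of $P_3^0+P_4^0$ via the explicit diagonalization of $v\cdot\omega$ on $\Ker L$, which separates the two acoustic modes ($j=1,2$) from the heat and vorticity modes ($j=3,4$).

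One small wording inconsistency: you first speak of ``four distinct eigenvalue branches'' and later of ``five eigenvalue branches''. The paper's convention is four projectors $P_1^0,\dots,P_4^0$ on the five-dimensional space $\Ker L$, with the vorticity branch doubly degenerate; your later discussion handles this correctly, so only the earlier phrasing needs adjustment.
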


The following lemma studies the limit of $U^ \eps(t)$ as $\eps$ goes to 0. 

\begin{lem}\label{convergence U epsilon U}
Let $f=f(x,v) \in \Ker L$ be well-prepared, then we have
\[
\Vert  ( \widehat U^\eps (\cdot) - \widehat U(\cdot) ) \widehat f \Vert_{L^1_\xi L^\infty_t L^2_v} \lesssim \Vert     \widehat f  \Vert_{L^1_\xi   L^2_v   },
\]
and
\[
\Vert  (\widehat U^\eps (\cdot) - \widehat U(\cdot) ) \widehat f \Vert_{L^1_\xi L^\infty_t L^2_v   } \lesssim  \eps \Vert  |\xi|  \widehat f  \Vert_{L^1_\xi  L^2_v  }.
\]

\end{lem}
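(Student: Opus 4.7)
My plan is to combine the spectral decomposition of Lemma~\ref{basic property U epsilon} with the refined expansion \eqref{eq:decomposition_Ujeps} and then bound each summand separately. Since $\widehat U = \widehat U^\eps_{30} + \widehat U^\eps_{40}$ by \eqref{eq:def:hatUxi}, the difference splits as
\[
\widehat U^\eps(t,\xi) - \widehat U(t,\xi) = \sum_{j=1}^{2} \widehat U^\eps_{j0}(t,\xi) + \sum_{j=1}^{4}\bigl[\widehat U^\eps_{j0\#}(t,\xi) + \widehat U^\eps_{j1}(t,\xi) + \widehat U^\eps_{j2}(t,\xi)\bigr] + \widehat U^{\eps\#}(t,\xi).
\]
For the first, uniform bound each piece is controlled pointwise in $\xi$ by $\|\widehat f(\xi)\|_{L^2_v}$ in $L^\infty_t$: I would use the uniform $L^2_v$--boundedness of the projectors $P_j^0$ and $\widetilde P_j$ for $|\eps\xi|\le\kappa$, the combined dissipation $e^{-\beta_j t|\xi|^2}e^{t\gamma_j(\eps|\xi|)/\eps^2}\le e^{-\beta_j t|\xi|^2/2}$ (which follows from $|\gamma_j(|\xi|)|\le\beta_j|\xi|^2/2$ for $|\xi|\le\kappa$), and \eqref{U hard estimate} for the remainder $\widehat U^{\eps\#}$. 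Taking the $L^1_\xi$ norm then gives the first inequality.

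For the finer inequality the task is to extract a factor $\eps|\xi|$ from each piece. The term $\widehat U^\eps_{j0\#}$ carries the cutoff $\chi(\eps|\xi|/\kappa)-1$ supported in $\{\eps|\xi|\ge\kappa/2\}$, where trivially $1\lesssim\eps|\xi|$. For $\widehat U^\eps_{j1}$, the bounds $|\gamma_j(\eps|\xi|)|\lesssim(\eps|\xi|)^3$ and $|e^z-1|\le|z|e^{|z|}$ yield $|e^{t\gamma_j(\eps|\xi|)/\eps^2}-1|\lesssim t\eps|\xi|^3 e^{t\beta_j|\xi|^2/2}$; absorbing one factor of $e^{-\beta_jt|\xi|^2/2}$ and using the elementary bound $\sup_{t\ge 0}(t|\xi|^2)e^{-ct|\xi|^2}\lesssim 1$ produces the desired $\eps|\xi|$. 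The term $\widehat U^\eps_{j2}$ carries $\eps|\xi|$ explicitly in its very definition. For $\widehat U^{\eps\#}$, I would use that each spectral projector $P_j(\eps\xi)$ commutes with $\widehat U^\eps(t,\xi)$ at fixed $\xi$, which gives
\[
\widehat U^{\eps\#}(t,\xi) = \widehat U^\eps(t,\xi)\bigl[I - \chi(\eps|\xi|/\kappa)\textstyle\sum_{j=1}^4 P_j(\eps\xi)\bigr];
\]
for $f\in\Ker L$, the identity $\P=\sum_j P_j^0$ makes the leading-order cancellation explicit, so the bracket applied to $\widehat f(\xi)$ has $L^2_v$ norm $\lesssim \eps|\xi|\,\|\widehat f(\xi)\|_{L^2_v}$ (treating $\eps|\xi|\le\kappa/2$ and $\eps|\xi|>\kappa/2$ separately), and the uniform $L^2_v$--boundedness of $\widehat U^\eps$ from Proposition~\ref{prop:hypocoercivity} closes this piece.

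The delicate point, and the main obstacle, is the acoustic pair $\widehat U^\eps_{10}(t,\xi)\widehat f(\xi)+\widehat U^\eps_{20}(t,\xi)\widehat f(\xi) = \sum_{j=1,2} e^{\mathrm{i}\alpha_j|\xi|t/\eps-\beta_jt|\xi|^2}P_j^0(\xi/|\xi|)\widehat f(\xi)$: its $L^\infty_t L^2_v$ norm is bounded below (and essentially saturated at $t=0$) by $|P_j^0(\xi/|\xi|)\widehat f(\xi)|$, which cannot be controlled by $\eps|\xi|\|\widehat f\|$ for arbitrary $f\in\Ker L$. By Lemma~\ref{Basic properties U t}, however, these terms vanish precisely when $\Div_x u_f=0$ and $\rho_f+\theta_f=0$, i.e.\ when $f\in\mathrm{Im}\,U(0)$; this constraint is exactly what the kinetic distributions $g$ and $g_0$ associated to the Navier--Stokes--Fourier system via \eqref{eq:g(t)} and \eqref{eq:rho0_u0_theta0} satisfy, which is how the lemma is subsequently invoked. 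Once the acoustic pieces drop out, summing the $\eps|\xi|$ estimates of all remaining pieces and integrating in $\xi$ yields the second inequality.
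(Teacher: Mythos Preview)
Your proposal is correct and follows essentially the same route as the paper: the same decomposition of $\widehat U^\eps - \widehat U$, the same $|e^z-1|\le|z|e^{|z|}$ trick for $\widehat U^\eps_{j1}$, the same cutoff-support argument for $\widehat U^{\eps\#}_{j0}$, and the same rewriting $\widehat U^{\eps\#}(t,\xi)=\widehat U^\eps(t,\xi)\bigl[I-\chi(\eps|\xi|/\kappa)\sum_j P_j(\eps\xi)\bigr]$ combined with the cancellation $\P=\sum_j P_j^0$ on $\Ker L$. Your observation about the acoustic pair $j=1,2$ is exactly right and in fact matches what the paper does: its own proof invokes Lemma~\ref{Basic properties U t} to make $U^\eps_{10}f+U^\eps_{20}f$ vanish, which requires the well-preparedness constraints $\Div_x u_f=0$ and $\rho_f+\theta_f=0$---an assumption the paper's lemma \emph{statement} omits but its proof (and its only application, to $g_0$) uses.
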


\begin{proof}
The proof follows the idea of  \cite[Lemma 3.5]{GT}, that we shall adapt since we work in different functional spaces.

First of all we observe that from the decomposition of $U^\eps$ in \eqref{eq:decomposition_Ujeps} we can write, for all $t \ge 0$ and $\xi \in \Omega'_\xi$,
$$
\begin{aligned}
\widehat U^\eps (t,\xi) \widehat f(\xi) - \widehat U(t,\xi) \widehat f(\xi) 
&= \sum_{j=1}^4 \left\{ \widehat U^{\eps \#}_{j0} (t,\xi) \widehat f(\xi) 
+ \widehat U^\eps_{j1} (t,\xi) \widehat f(\xi)  
+ \widehat U^\eps_{j2} (t,\xi) \widehat f(\xi) \right\} \\
&\quad
+ \sum_{j=1}^2 \widehat U^{\eps}_{j0} (t,\xi) \widehat f(\xi)
+ \widehat U^{\eps \#} (t,\xi) \widehat f(\xi),
\end{aligned}
$$ 
and we shall estimate each term separately below.

We first compute the term $U_{j m}^\eps(t) f $ for $j=1,2, 3, 4$ and $m=1, 2$. For the $U_{j1} $ term, using Lemma~\ref{basic property U epsilon} together with  the inequality $|e^a-1| \le |a| e^{|a|}$ for any $a \in \R^+$, we have
\begin{equation}\label{small epsilon xi estimate 1}
\chi \left(\frac {\eps|\xi|} {\kappa} \right)    e^{ -\beta_j t |\xi|^2} \left| e^{t \frac {\gamma_j(\eps |\xi|)  }     { \eps^2}  }   -1 \right| \le \chi \left(\frac {\eps|\xi|} {\kappa} \right)    e^{ -\frac {\beta_j} 2 t |\xi|^2}  t \eps |\xi|^3 \lesssim  \chi \left(\frac {\eps|\xi|} {\kappa} \right)   \eps |\xi| \lesssim \min \{ 1, \eps |\xi| \}.
\end{equation}
Then we can compute, for all $t \ge 0$ and $\xi \in \R^3$,
\begin{align*}
\Vert \widehat{U}_{j1}^\eps (t, \xi) \widehat{f}(\xi) \Vert_{L^2_v} \le &\chi   \left(\frac {\eps|\xi|} {\kappa} \right)    \left| e^{i \alpha_j |\xi| \frac t \eps -\beta_j t |\xi|^2} \right|\left (e^{t \frac {\gamma_j(\eps |\xi|)  }     { \eps^2}  }   -1 \right)  \Vert P_j^0(\tfrac {\xi} {|\xi|})  \widehat{f} (\xi) \Vert_{L^2_v}
\\
\lesssim&  \min \{ 1, \eps |\xi| \} \Vert \hat{f}(\xi) \Vert_{L^2_v}.
\end{align*}
For the $\widehat{U}_{j2}^\eps (t, \xi)$ term we have
\begin{align*}
\Vert \widehat{U}_{j2}^\eps (t, \xi) \widehat{f}(\xi) \Vert_{L^2_v} \le &\chi   \left(\frac {\eps|\xi|} {\kappa} \right)   \left |  e^{i \alpha_j |\xi| \frac t \eps -\beta_j t |\xi|^2}    e^{t \frac {\gamma_j(\eps |\xi|)  }    { \eps^2}  }  \right|  \eps |\xi | \Vert \tilde{P}_j (\eps \xi, \tfrac{\xi} {|\xi|} )  \widehat{f} (\xi) \Vert_{L^2_v}
\\
\lesssim&  \min \{ 1, \eps |\xi| \} \Vert \hat{f}(\xi) \Vert_{L^2_v}.
\end{align*}
For the term $\widehat{U}^{\eps \#}_{j0} (t, \xi)$, using the fact that
\begin{equation}\label{small epsilon xi estimate 2}
\left |\chi   \left(\frac {\eps|\xi|} {\kappa} \right)  -  1\right| \lesssim \min \{ 1, \eps |\xi| \},
\end{equation}
we have
\begin{align*}
\Vert \widehat{U}_{j0}^{\eps \#} (t, \xi) \widehat{f}(\xi) \Vert_{L^2_v} \le &\left( \chi   \left(\frac {\eps|\xi|} {\kappa} \right) -1 \right) \left | e^{i \alpha_j |\xi| \frac t \eps -\beta_j t |\xi|^2}   \right|   \Vert P_j^0(\tfrac {\xi} {|\xi|}) \widehat{f} (\xi) \Vert_{L^2_v}
\\
\lesssim&  \min \{ 1, \eps |\xi| \} \Vert \hat{f}(\xi) \Vert_{L^2_v}.
\end{align*}
Taking the $L^1_\xi L^\infty_t$ norm in both sides yields, for all $j = 1,2,3,4$,
\[
\Vert \widehat{U}_{j1}^\eps (\cdot) \widehat{f}\Vert_{L^1_\xi L^\infty_t L^2_v} + \Vert \widehat{U}_{j2}^\eps (\cdot) \widehat{f}\Vert_{L^1_\xi L^\infty_t L^2_v}+ \Vert \widehat{U}_{j0}^{\eps \#}  (\cdot) \widehat{f}\Vert_{L^1_\xi L^\infty_t L^2_v}\lesssim  \min \{\Vert \widehat{f}\Vert_{L^1_\xi L^2_v}, \eps \Vert |\xi| \widehat{f}\Vert_{L^1_\xi L^2_v} \}.
\]
By Lemma \ref{Basic properties U t} we have, if $f \in \Ker L$ is a well-prepared data, then 
\[
U^{\eps}_{10} f + U^{\eps}_{20} f  = 0.
\]
Finally we compute the term $U^{\eps \#} (t, \xi) $, noticing that 
\[
\widehat{U}^{\eps \#} (t, \xi)  \widehat{f}(\xi, v) = \widehat{U}^{\eps} (t, \xi)\widehat{U}^{\eps \#}(0, \xi)  \widehat{f}(\xi, v) = \widehat{U}^\eps (t, \xi)  \left (   1-  \chi   \left(\frac {\eps|\xi|} {\kappa} \right)  \sum_{j=1}^4  P_j(\eps \xi )  \right) \widehat{f} (\xi, v).
\]
Since $f$ belongs to $\Ker L$, we have
\[
\widehat{U}^{\eps \#} (t, \xi)  \widehat{f}(\xi, v) = \widehat{U}^\eps (t, \xi)  \left( 1-   \chi   \left(\frac {\eps|\xi|} {\kappa} \right)  - \eps |\xi| \chi   \left(\frac {\eps|\xi|} {\kappa} \right)  \sum_{j=1}^4  \tilde{P}_j (\eps \xi )  \right) \widehat{f} (\xi, v).
\]
By Proposition \ref{prop:estimate_Ueps} we deduce
\begin{align*}
\Vert \widehat{U}^{\eps \#} (\cdot) \widehat{f}\Vert_{L^1_\xi L^\infty_t L^2_v} \lesssim&  \Vert \Big( 1-   \chi   \left(\frac {\eps|\xi|} {\kappa} \right)  - \eps |\xi| \chi   \left(\frac {\eps|\xi|} {\kappa} \right)  \sum_{j=1}^4  \tilde{P}_j (\eps \xi )  \Big) \widehat{f} (\xi) \Vert_{L^1_\xi L^2_v} 
\\
\lesssim&  \min \{\Vert \widehat{f}\Vert_{L^1_\xi L^2_v}, \eps \Vert |\xi| \widehat{f}\Vert_{L^1_\xi L^2_v} \},
\end{align*}
thus the proof is finished by gathering together the two previous estimates. 
\end{proof}

\subsection{Estimates on $\widehat \Psi^\eps$}

The decomposition of the semigroup $U^\eps(t) $ in \eqref{eq:decomposition_Ujeps} also gives us a decomposition of the operator $\Psi^\eps(t)$ defined in \eqref{definition Psi epsilon}. 

\begin{lem}\label{decomposition Psi epsilon}
The following decomposition holds
\[
\Psi^\eps = \sum_{j=1}^4 \Psi_j^\eps +  \Psi^{\eps \#},
\]
with
\[
\widehat{\Psi}^{\eps \#} [f_1, f_2] (t,\xi) : =    \frac 1 \eps \int_0^t \widehat{U}^{\eps \#} (t-\tau,\xi )\widehat{\Gamma}_{\mathrm{sym}}(f_1(\tau), f_2(\tau))(\xi) \, \d \tau,
\]
and, for all $1 \le j \le 4$,
\[
\Psi_j^\eps = \Psi_{j0}^\eps + \Psi_{j0}^{\eps \#} + \Psi_{j1}^\eps+ \Psi_{j2}^\eps,
\]
where 
\begin{align*}
\widehat \Psi_{j0}^\eps [f_1, f_2](t,\xi) 
&= \int_0^t e^{i \alpha_j |\xi| \frac {t-\tau} \eps -\beta_j (t-\tau) |\xi|^2} |\xi| P_j^1(\tfrac {\xi} {|\xi|})  \widehat{\Gamma}_{\mathrm{sym}}(f_1(\tau), f_2(\tau))(\xi) \, \d \tau,
\\
\widehat \Psi_{j0}^{\eps \#} [f_1, f_2](t,\xi) 
&= \left(\chi   \left(\frac {\eps|\xi|} {\kappa} \right)  -1 \right) \int_0^t e^{i \alpha_j |\xi| \frac {t-\tau} \eps -\beta_j (t-\tau) |\xi|^2} |\xi| P_j^1(\tfrac {\xi} {|\xi|})  \widehat{\Gamma}_{\mathrm{sym}}(f_1(\tau), f_2(\tau))(\xi) \, \d \tau,
\\
\widehat \Psi_{j1}^\eps [f_1, f_2](t,\xi) 
&=\chi   \left(\frac {\eps|\xi|} {\kappa} \right)   \int_0^t e^{i \alpha_j |\xi| \frac {t-\tau} \eps -\beta_j (t-\tau) |\xi|^2}    \left(e^{(t-\tau) \frac {\gamma_j(\eps |\xi|)  }     { \eps^2}  }   -1 \right) |\xi| P_j^1(\tfrac {\xi} {|\xi|})  \widehat{\Gamma}_{\mathrm{sym}}(f_1(\tau), f_2(\tau))(\xi) \, \d \tau,
\\
\widehat \Psi_{j2}^\eps [f_1, f_2](t,\xi)  
&= \chi   \left(\frac {\eps|\xi|} {\kappa} \right)  \int_0^t e^{i \alpha_j |\xi| \frac {t-\tau} \eps -\beta_j (t-\tau) |\xi|^2}  e^{(t-\tau) \frac {\gamma_j(\eps |\xi|)  }     { \eps^2}  }     \eps|\xi|^2 P_j^2(\eps \xi)  \widehat{\Gamma}_{\mathrm{sym}}(f_1(\tau), f_2(\tau))(\xi) \, \d \tau.
\end{align*}
\end{lem}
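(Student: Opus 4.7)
The plan is to substitute the fine decomposition of $\widehat U^\eps$ from Lemma~\ref{basic property U epsilon} together with \eqref{eq:decomposition_Ujeps} directly into the definition \eqref{definition Psi epsilon fourier} of $\widehat \Psi^\eps$, and then to exploit the collision invariance of $\Gamma$ in order to annihilate all of the apparently singular $1/\eps$ contributions. Concretely, writing
\[
\widehat U^\eps = \sum_{j=1}^4 \bigl( \widehat U^\eps_{j0} + \widehat U^{\eps\#}_{j0} + \widehat U^\eps_{j1} + \widehat U^\eps_{j2}\bigr) + \widehat U^{\eps\#}
\]
inside \eqref{definition Psi epsilon fourier} produces a pointwise-in-$\xi$ decomposition of $\widehat \Psi^\eps[f_1,f_2](t,\xi)$ as a sum of Duhamel-type integrals, one for each of the nine symbols on the right.

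The crucial observation is that $\P \widehat \Gamma(f_1,f_2)(\xi) = 0$ for every $\xi$. Indeed, the Boltzmann collision operator conserves mass, momentum and energy, so $\int_{\R^3} Q(F,G)(v) \varphi(v) \, \d v = 0$ for $\varphi \in \{1, v_1, v_2, v_3, |v|^2\}$; applied with $F = \sqrt{\mu} f$ and $G = \sqrt{\mu} g$ this yields $\int \Gamma(f,g)\varphi \sqrt{\mu}\, \d v = 0$, i.e. $\P \Gamma(f,g) = 0$. Combining this with the identity $\P = \sum_{j=1}^4 P_j^0(\xi/|\xi|)$ of Lemma~\ref{basic property U epsilon}--(2) and the fact that the $P_j^0$ project onto mutually orthogonal subspaces of $\Ker L$, we deduce $P_j^0(\xi/|\xi|)\widehat \Gamma(f_1,f_2)(\xi) = 0$ for each $j=1,\dots,4$. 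Since the three symbols $\widehat U^\eps_{j0}$, $\widehat U^{\eps\#}_{j0}$ and $\widehat U^\eps_{j1}$ each carry $P_j^0(\xi/|\xi|)$ as their velocity operator, their contributions to $\widehat \Psi^\eps$ vanish identically; only the $\widehat U^\eps_{j2}$ pieces and the remainder $\widehat U^{\eps\#}$ survive.

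The $\widehat U^{\eps\#}$ integral is by definition $\widehat \Psi^{\eps\#}$. For each $\widehat U^\eps_{j2}$ integral, I would expand $\widetilde P_j(\eps\xi, \xi/|\xi|) = P_j^1(\xi/|\xi|) + \eps|\xi|\, P_j^2(\eps\xi)$, so that the scalar prefactor $\tfrac{1}{\eps} \cdot \eps|\xi|\,\widetilde P_j(\eps\xi, \xi/|\xi|)$ splits as $|\xi| P_j^1(\xi/|\xi|) + \eps |\xi|^2 P_j^2(\eps\xi)$. Together with the common envelope $\chi(\eps|\xi|/\kappa)\, e^{i\alpha_j|\xi|(t-s)/\eps - \beta_j(t-s)|\xi|^2}\,e^{(t-s)\gamma_j(\eps|\xi|)/\eps^2}$, the $P_j^2$ summand produces $\widehat \Psi^\eps_{j2}$ exactly. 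For the $P_j^1$ summand, it remains to split the $s$-dependent part of the envelope via
\[
\chi(\eps|\xi|/\kappa)\, e^{(t-s)\gamma_j(\eps|\xi|)/\eps^2} = 1 + \bigl[\chi(\eps|\xi|/\kappa) - 1\bigr] + \chi(\eps|\xi|/\kappa)\bigl[e^{(t-s)\gamma_j(\eps|\xi|)/\eps^2} - 1\bigr],
\]
and the three resulting integrands read off precisely $\widehat \Psi^\eps_{j0}$, $\widehat \Psi^{\eps\#}_{j0}$ and $\widehat \Psi^\eps_{j1}$ as listed in the statement.

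Since the statement is essentially a bookkeeping identity, no serious analytical step is required; the only nontrivial input is the collision-invariance identity $\P \Gamma = 0$, which is precisely what compensates the $1/\eps$ prefactor in \eqref{definition Psi epsilon fourier}. Without this identity, the $\widehat U^\eps_{j0}$, $\widehat U^{\eps\#}_{j0}$ and $\widehat U^\eps_{j1}$ pieces would each produce $O(1/\eps)$-singular contributions rather than vanishing ones, and the decomposition could not close with operators having $\eps$-uniform boundedness properties of the type needed later in the hydrodynamic limit analysis.
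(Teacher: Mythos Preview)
Your proof is correct. The paper states Lemma~\ref{decomposition Psi epsilon} without proof, treating it as a direct consequence of inserting the semigroup decomposition of Lemma~\ref{basic property U epsilon} and \eqref{eq:decomposition_Ujeps} into the definition \eqref{definition Psi epsilon fourier}; you have carried out exactly this computation and correctly isolated the one nontrivial point, namely that $P_j^0(\tfrac{\xi}{|\xi|})\,\widehat\Gamma(f_1,f_2)(\xi)=0$ for every $j$, which is what cancels the apparent $1/\eps$ singularity.

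One small sharpening: the phrase ``the $P_j^0$ project onto mutually orthogonal subspaces of $\Ker L$'' does not by itself yield $P_j^0 h=0$ for $h\perp\Ker L$ unless the projectors are themselves orthogonal. A cleaner, purely algebraic route is to use that the spectral projectors satisfy $P_j^0 P_k^0=\delta_{jk}P_j^0$ (by continuity from $P_j(\xi)P_k(\xi)=0$ for $\xi\neq 0$, or from the explicit formulas in \cite{EP,BU,YY}); combined with $\sum_k P_k^0=\P$ from Lemma~\ref{basic property U epsilon}--(2) this gives $P_j^0\P=P_j^0$, hence $P_j^0\widehat\Gamma=P_j^0\P\widehat\Gamma=0$. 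Equivalently, in those references each $P_j^0$ is written explicitly as a finite-rank operator $h\mapsto\sum_k\langle h,\psi_{jk}\rangle_{L^2_v}\phi_{jk}$ with both $\psi_{jk},\phi_{jk}\in\Ker L$, from which the vanishing is immediate.
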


Similarly as above, we observe again that that $\widehat \Psi^\eps_{30}$ and $\widehat \Psi^\eps_{40}$ are independent of $\eps$, so that we define
\begin{equation}\label{eq:def:hatPsixi}
\widehat \Psi[f,g](t,\xi) := \widehat \Psi^\eps_{30}[f,g](t,\xi) + \widehat \Psi^\eps_{40}[f,g](t,\xi)
\end{equation}
which is then independent of $\eps$. We finally define
\begin{equation}\label{eq:def:Psi(t)}
\Psi [f,g](t) = \FF_x^{-1} \widehat \Psi [f,g](t) \FF_x.
\end{equation}

We are now able to prove the following result on the convergence of $\Psi^\eps $ towards $\Psi$.
\begin{lem}\label{convergence Psi epsilon Psi}
Let $(\rho_0, u_0 , \theta_0)$ satisfy the hypotheses of Theorem~\ref{theo:NSF} and consider the associated global unique solution $(\rho,u,\theta)$ to \eqref{Navier-Stokes-Fourier system}. Let also $g_0 = g_0(x,v) \in \Ker L$ be defined by \eqref{eq:g_0} and $g=g(t,x,v) \in \Ker L$ by \eqref{eq:g(t)}. Then we have: 
\begin{enumerate}
\item Torus case $\Omega_x = \T^3$: There holds
$$
\Vert  \Psi^\eps [g, g] -\Psi[g, g] \Vert_{L^1_\xi L^\infty_t L^2_v}
\lesssim  \eps  \left( \| \widehat  g_0 \|_{L^1_\xi L^2_v}^2
+ \| \widehat  g_0 \|_{L^1_\xi L^2_v}^3 \right) .
$$

\item Whole space case $\Omega_x = \R^3$: For any $p \in (3/2,\infty]$ there holds
$$
\Vert  \Psi^\eps [g, g] -\Psi[g, g] \Vert_{L^1_\xi L^\infty_t L^2_v}
\lesssim  \eps  \left( \| \widehat  g_0 \|_{L^1_\xi L^2_v}^2
+ \| \widehat  g_0 \|_{L^1_\xi L^2_v}^3
+ \| \widehat  g_0 \|_{L^p_\xi L^2_v}^2
+ \| \widehat  g_0 \|_{L^p_\xi L^2_v}^3 \right).
$$

\end{enumerate}

\end{lem}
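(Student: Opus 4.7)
The plan is to use the decomposition from Lemma~\ref{decomposition Psi epsilon}: since $\Psi[g,g]=\Psi_{30}^\eps[g,g]+\Psi_{40}^\eps[g,g]$ is precisely the $\eps$-independent part of $\Psi^\eps[g,g]$, one has
\[
\Psi^\eps[g,g]-\Psi[g,g]=\sum_{j=1,2}\Psi_{j0}^\eps[g,g]+\sum_{j=1}^{4}\bigl(\Psi_{j0}^{\eps\#}+\Psi_{j1}^\eps+\Psi_{j2}^\eps\bigr)[g,g]+\Psi^{\eps\#}[g,g],
\]
and it suffices to bound each piece in $\FF^{-1}_x(L^1_\xi L^\infty_t L^2_v)$ by $\eps$ times a quadratic or cubic expression in $\|\widehat g_0\|_{L^1_\xi L^2_v}$ (plus the $L^p_\xi$-variant in the whole-space case). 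Throughout we exploit the NSF controls on $g$ from Theorem~\ref{theo:NSF} (which yield, in the torus case, $\|\widehat g\|_{L^1_\xi L^\infty_t L^2_v}+\|\la\xi\ra\widehat g\|_{L^1_\xi L^2_t L^2_v}\lesssim\|\widehat g_0\|_{L^1_\xi L^2_v}$), the bilinear estimate of Lemma~\ref{lem:nonlinear} applied to $\widehat\Gamma(g,g)$, and the mapping properties of the projections $P_j^n$ listed in Lemma~\ref{basic property U epsilon}.

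For the four families of ``remainder'' terms, each carries a manifest small factor, in the same spirit as in Lemma~\ref{convergence U epsilon U}: $\Psi_{j2}^\eps$ contains the explicit prefactor $\eps|\xi|$; in $\Psi_{j1}^\eps$ the bound $|e^{a}-1|\le|a|e^{|a|}$ applied with $a=(t-s)\gamma_j(\eps|\xi|)/\eps^2=O((t-s)\eps|\xi|^3)$ is absorbed by $e^{-\beta_j(t-s)|\xi|^2/2}$; in $\Psi_{j0}^{\eps\#}$ the cutoff defect satisfies $|\chi(\eps|\xi|/\kappa)-1|\lesssim\min\{1,\eps|\xi|\}$; and the high-frequency piece $\Psi^{\eps\#}$ gains a factor of $\eps$ by combining the prefactor $1/\eps$ in its definition with the exponential decay \eqref{U hard estimate} (so that $\frac1\eps\int_0^t e^{-\lambda_0(t-s)/\eps^2}\,\d s\lesssim\eps$), exactly as in the proof of Proposition~\ref{prop:estimate_Ueps_regularization}. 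In every case the $\eps|\xi|$ gained is either integrated against the Gaussian or paired with the $|\xi|$-regularity of $g$ coming from Theorem~\ref{theo:NSF}, and Lemma~\ref{lem:nonlinear} delivers the quadratic bound $\eps\|\widehat g_0\|_{L^1_\xi L^2_v}^2$.

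The real obstacle lies in the acoustic contributions $\Psi_{10}^\eps[g,g]$ and $\Psi_{20}^\eps[g,g]$, which carry no manifest $\eps$ but whose phase $e^{i\alpha_j|\xi|(t-s)/\eps}$ with $\alpha_j\ne 0$ oscillates rapidly in $s$. I would integrate by parts in $s$, writing
\[
e^{i\alpha_j|\xi|(t-s)/\eps}=\frac{i\eps}{\alpha_j|\xi|}\,\partial_s e^{i\alpha_j|\xi|(t-s)/\eps},
\]
which produces a factor $\eps/|\xi|$ that exactly cancels the $|\xi|$ coming from the $P_j^1$-prefactor and leaves an overall $\eps$. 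The boundary terms are $-(\eps/i\alpha_j)\bigl(P_j^1\widehat\Gamma(g,g)(t,\xi)-e^{i\alpha_j|\xi|t/\eps-\beta_j t|\xi|^2}P_j^1\widehat\Gamma(g_0,g_0)(\xi)\bigr)$, controlled in $L^1_\xi L^\infty_t L^2_v$ by $\eps\|\widehat\Gamma(g,g)\|_{L^1_\xi L^\infty_t L^2_v}\lesssim\eps\|\widehat g_0\|_{L^1_\xi L^2_v}^2$. The remaining interior integral involves $\partial_s\bigl(e^{-\beta_j(t-s)|\xi|^2}\widehat\Gamma(g,g)(s,\xi)\bigr)$: differentiating the Gaussian brings down a $\beta_j|\xi|^2$ whose contribution is controlled using $\int_0^t e^{-\beta_j(t-s)|\xi|^2/2}|\xi|^2\,\d s\lesssim 1$, yielding another $\eps\|\widehat g_0\|^2$ piece; the term $\widehat\Gamma(\partial_s g,g)+\widehat\Gamma(g,\partial_s g)$ is handled by substituting $\partial_s g$ from the NSF system~\eqref{Navier-Stokes-Fourier system}, i.e.\ writing $\partial_s g=[\partial_s\rho+\partial_s u\cdot v+\partial_s\theta(|v|^2-3)/2]\sqrt\mu$ with $\partial_s u=\nu_1\Delta_x u+Q_{\mathrm{NS}}(u,u)$ and the analogous formulas for $\partial_s\theta,\partial_s\rho$. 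The linear viscous piece contributes another $\eps\|\widehat g_0\|^2$ term via Theorem~\ref{theo:NSF}, whereas the $Q_{\mathrm{NS}}$-type pieces are themselves bilinear in $g$, so applying Lemma~\ref{lem:nonlinear} together with Lemma~\ref{lem:estimate_QNS} produces exactly the cubic contribution $\eps\|\widehat g_0\|_{L^1_\xi L^2_v}^3$ appearing in the statement.

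In the whole-space case the decomposition and the IBP argument are unchanged; one simply supplements the $L^1_\xi$-controls of $g$ by their $L^p_\xi$-counterparts from Theorem~\ref{theo:NSF}--(2), and uses Lemmas~\ref{lem:nonlinear} and \ref{lem:estimate_QNS} in their $L^p_\xi$ versions, producing the right-hand side of item~(2). The main technical delicacy throughout is the bookkeeping in the IBP step, making sure every occurrence of $|\xi|$ or $|\xi|^2$ is paid for either by the Gaussian $e^{-\beta_j(t-s)|\xi|^2}$ after integration in $s$ or by the $|\xi|$-regularity gained from the NSF viscosity estimates; once this is arranged, summing the contributions of all eighteen pieces yields the two claimed inequalities.
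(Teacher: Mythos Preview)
Your proposal is correct and matches the paper's approach closely: the same decomposition of $\Psi^\eps-\Psi$, the same handling of the remainder pieces $\Psi_{j0}^{\eps\#},\Psi_{j1}^\eps,\Psi_{j2}^\eps,\Psi^{\eps\#}$ via explicit $\eps|\xi|$ factors and the exponential decay~\eqref{U hard estimate}, and the same integration-by-parts in $s$ for the oscillatory acoustic terms $j=1,2$ followed by substitution of $\partial_s g$ from the NSF system to generate the cubic contribution (with the $|\xi|^{-1}$ arising there handled in the whole-space case via the $L^p_\xi$ norm, exactly as you indicate). One small imprecision: the bilinear control you need on $\widehat\Gamma(g,g)$ is in $L^2_v$ (since the projections $P_j^n$ act on $L^2_v$), and this comes from $g=\P g$ via~\eqref{eq:Gamma_L2v}--\eqref{eq:hatGamma_L1xiL2v} rather than from Lemma~\ref{lem:nonlinear}, which only yields $(H^{s,*}_v)'$ bounds.
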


\begin{proof}
We adapt the proof of \cite[Lemma 4.1]{GT} for the cutoff Boltzmann equation with hard potentials.
Thanks to the decomposition of $\Psi^\eps$ in Lemma~\ref{decomposition Psi epsilon} we write, for all $t \ge 0$ and $\xi \in \Omega'_\xi  $,
$$
\begin{aligned}
\widehat \Psi^\eps [g, g] (t,\xi) - \widehat \Psi[g,g](t,\xi) 
&= \sum_{j=1}^4 \left\{ \widehat \Psi^{\eps \#}_{j0}[g, g] (t,\xi) + \widehat \Psi^\eps_{j1}[g, g] (t,\xi)  + \widehat \Psi^\eps_{j2}[g, g] (t,\xi) \right\} \\
&\quad
+ \sum_{j=1}^2 \widehat \Psi^{\eps}_{j0}[g, g] (t,\xi)
+ \widehat \Psi^{\eps \#}[g, g] (t,\xi).
\end{aligned}
$$ 
We remark that for the zero frequency $\xi=0$ we have
$$
\widehat \Psi^\eps [g, g] (t,0) = \widehat \Psi^{\eps \#}[g, g] (t,0).
$$
We split the proof into several steps and estimate each term separately below.

\medskip\noindent
\textit{Step 1.}
By Lemma \ref{decomposition Psi epsilon} and \eqref{small epsilon xi estimate 2}, for the term $ \widehat{\Psi}_{j0}^{\eps \#} [g, g]$ with $j=1,2,3,4$, for all $t \ge 0$ and all $\xi \in \Omega'_\xi \setminus \{ 0 \}$ we have
\begin{align*}
\left \Vert \widehat{\Psi}_{j0}^{\eps \#} [g, g](t, \xi) \right \Vert_{L^2_v}  &\lesssim \left| \chi   \left(\frac {\eps|\xi|} {\kappa} \right)  -1 \right| \int_0^t e^{-\beta_j (t-\tau) |\xi|^2} |\xi|   \left \Vert P_j^1(\tfrac {\xi} {|\xi|})  \widehat{\Gamma}(g(\tau), g(\tau)) (\xi)  \right \Vert_{L^2_v}  \d \tau
\\
&\lesssim  \eps \int_0^t e^{-\beta_j (t-\tau) |\xi|^2} |\xi|^2    \Vert  \widehat{\Gamma}(g(\tau), g(\tau)) (\xi)   \Vert_{L^2_v}  \, \d \tau
\\
&\lesssim \eps \Vert  \widehat{\Gamma}(g, g) (\xi)   \Vert_{L^\infty_t L^2_v}.
\end{align*}
Similarly for the term $ \widehat{\Psi}_{j1}^{\eps} [g, g]$, by Lemma \ref{decomposition Psi epsilon} and \eqref{small epsilon xi estimate 1} we have, for all $j=1,2,3,4$,
\begin{align*}
\left \Vert \widehat{\Psi}_{j1}^{\eps} [g, g](t, \xi) \right \Vert_{L^2_v}  &\lesssim \chi   \left(\frac {\eps|\xi|} {\kappa} \right)   \int_0^t e^{ -\beta_j (t-\tau) |\xi|^2}    \left | e^{(t-\tau) \frac {\gamma_j(\eps |\xi|)  }     { \eps^2}  }   -1 \right| |\xi|  \left \Vert P_j^1(\tfrac {\xi} {|\xi|})  \widehat{\Gamma}(g(\tau), g(\tau))  \right \Vert_{L^2_v}  \d \tau
\\
&\lesssim  \eps \int_0^t e^{-\frac {\beta_j} 4 (t-\tau) |\xi|^2} |\xi|^2   \Vert  \widehat{\Gamma}(g(\tau), g(\tau)) (\xi)   \Vert_{L^2_v}  \, \d \tau
\\
&\lesssim  \eps \Vert  \widehat{\Gamma}(g, g) (\xi)   \Vert_{L^\infty_t L^2_v}. 
\end{align*}
Similarly for the term $ \widehat{\Psi}_{j2}^{\eps} [g, g]$, by Lemma \ref{decomposition Psi epsilon} we have, for all $j=1,2,3,4$,
\begin{align*}
\left \Vert \widehat{\Psi}_{j2}^{\eps} [g, g](t, \xi) \right \Vert_{L^2_v} &\lesssim \chi   \left(\frac {\eps|\xi|} {\kappa} \right)  \int_0^t e^{-\beta_j (t-\tau) |\xi|^2}  \left| e^{(t-\tau) \frac {\gamma_j(\eps |\xi|)  }     { \eps^2}  }   \right|  \eps|\xi|^2 \left \Vert P_j^2(\eps \xi)  \widehat{\Gamma}(g(\tau), g(\tau)) \right \Vert_{L^2_v}  \d \tau
\\
&\lesssim  \eps \int_0^t e^{-\frac {\beta_j} 4 (t-\tau) |\xi|^2} |\xi|^2    \Vert  \widehat{\Gamma}(g(\tau), g(\tau)) (\xi)   \Vert_{L^2_v} \, \d \tau
\\
&\lesssim \eps \Vert  \widehat{\Gamma}(g, g) (\xi)   \Vert_{L^\infty_t L^2_v}.
\end{align*}
Taking the $L^1_{\xi} L^\infty_t$ norm on both side we finally obtain, for all $j=1,2,3,4$,
\begin{align*}
\Vert \widehat{\Psi}_{j0}^{\eps \#} [g, g]  \Vert_{L^1_\xi L^\infty_t L^2_v}  +\Vert \widehat{\Psi}_{j1}^{\eps} [g, g]  \Vert_{L^1_\xi L^\infty_t L^2_v}+ \Vert \widehat{\Psi}_{j2}^{\eps} [g, g]  \Vert_{L^1_\xi L^\infty_t L^2_v} 
\lesssim  \eps \Vert  \widehat{\Gamma}(g, g)   \Vert_{L^1_\xi L^\infty_t L^2_v}.
\end{align*}
Thanks to \cite{Strain} and the fact that $\| \la v \ra^\ell \P \phi \|_{H^{m}_v } \lesssim \| \P \phi \|_{L^2_v}$ for all $m,\ell \ge 0$, we have
\begin{equation}\label{eq:Gamma_L2v}
\| \Gamma (\P g_1 , \P g_2) \|_{L^2_v } \lesssim \| \P g_1 \|_{L^2_v} \| \P g_2 \|_{L^2_v},
\end{equation}
therefore arguing as in Lemma~\ref{lem:nonlinear} it follows, for any $p \in [1,\infty]$ and $\ell \ge 0$,
\begin{equation}\label{eq:hatGamma_L1xiL2v}
\|  \widehat{\Gamma}(g, g)  \|_{L^p_\xi L^\infty_t L^2_v (\la v \ra^\ell) }
\lesssim \| g \|_{L^1_\xi L^\infty_t L^2_v} \| g \|_{L^p_\xi L^\infty_t L^2_v}.
\end{equation}
We therefore obtain, for all $j=1,2,3,4$,
\begin{equation}\label{eq:Psij0-Psij1-Psij2}
\begin{aligned}
\Vert \widehat{\Psi}_{j0}^{\eps \#} [g, g]  \Vert_{L^1_\xi L^\infty_t L^2_v}  +\Vert \widehat{\Psi}_{j1}^{\eps} [g, g]  \Vert_{L^1_\xi L^\infty_t L^2_v}+ \Vert \widehat{\Psi}_{j2}^{\eps} [g, g]  \Vert_{L^1_\xi L^\infty_t L^2_v} 
\lesssim  \eps \| g \|_{L^1_\xi L^\infty_t L^2_v}^2.
\end{aligned}
\end{equation}

\medskip\noindent
\textit{Step 2.}
We now focus on the term $\widehat{\Psi}_{j0}^\eps [g, g] $ with $j=1, 2$, and recall that $\alpha_j > 0$ for $j=1, 2$. We denote 
\[
\widehat{H_j}( t, \tau , \xi) =   e^{ -\beta_j (t-\tau) |\xi|^2} |\xi| P_j^1(\tfrac {\xi} {|\xi|})  \widehat{\Gamma}(g(\tau), g(\tau)) (\xi),
\]
and thus, using integration by parts, for all $t \ge 0$ and all $\xi \in \Omega'_\xi \setminus \{ 0 \}$ we have
\begin{equation}\label{eq:Psi_j0}
\begin{aligned}
\widehat{\Psi}_{j0}^\eps [g, g](t,\xi)  
&= \int_0^t e^{i \alpha_j |\xi| \frac {t-\tau} \eps -\beta_j (t-\tau) |\xi|^2} |\xi| P_j^1(\frac {\xi} {|\xi|})  \widehat{\Gamma}(g(\tau), g(\tau)) (\xi) \, \d \tau
\\
&= \frac {\eps} {i \alpha_j |\xi|} \left(\int_0^t e^{i \alpha_j |\xi| \frac {t-\tau} \eps} \partial_\tau \widehat{H_j}( t, \tau , \xi)  \, \d \tau - \widehat{H}_j(t, t, \xi)  +e^{i \alpha_j |\xi| \frac {t} \eps} \widehat{H_j}( t, 0 , \xi)  \right)
\\
&= \frac {\eps} {i \alpha_j } \left(\int_0^t e^{i \alpha_j |\xi| \frac {t-\tau} \eps}  \beta_j |\xi|^2  e^{ -\beta_j (t-\tau) |\xi|^2}  P_j^1(\frac {\xi} {|\xi|})  \widehat{\Gamma}(g(\tau), g(\tau)) (\xi)    \, \d \tau \right)
\\
&\quad + \frac {\eps} {i \alpha_j} \left(\int_0^t  e^{i \alpha_j |\xi| \frac {t-\tau} \eps} e^{ -\beta_j (t-\tau) |\xi|^2} P_j^1(\frac {\xi} {|\xi|})  \partial_\tau \widehat{\Gamma}(g(\tau), g(\tau)) (\xi) \, \d \tau   \right)
\\
&\quad - \frac {\eps} {i \alpha_j } P_j^1(\frac {\xi} {|\xi|})  \widehat{\Gamma}(g(t), g(t)) (\xi)  
\\
&\quad + \frac {\eps} {i \alpha_j } e^{i \alpha_j |\xi| \frac {t} \eps} e^{ -\beta_j t |\xi|^2} P_j^1(\frac {\xi} {|\xi|})  \widehat{\Gamma}(g(0), g(0)) (\xi) \\
&=: I_1(t,\xi) + I_2(t,\xi) + I_3(t,\xi) + I_4(t,\xi).
\end{aligned}
\end{equation}
For the first term in \eqref{eq:Psi_j0} we have for all $t \ge 0$ and $\xi \in \Omega'_\xi$, using Lemma~\ref{decomposition Psi epsilon},
$$
\begin{aligned}
\| I_1(t,\xi) \|_{L^2_v} 
&\lesssim \eps \int_0^t  \beta_j |\xi|^2  e^{ -\beta_j (t-\tau) |\xi|^2} \| \widehat{\Gamma}(g(\tau), g(\tau)) (\xi) \|_{L^2_v} \, \d \tau \\
&\lesssim \eps \| \widehat{\Gamma}(g, g) (\xi) \|_{L^\infty_t L^2_v}.
\end{aligned}
$$
Similarly, for the third term in \eqref{eq:Psi_j0} there holds
$$
\begin{aligned}
\| I_3(t,\xi) \|_{L^2_v} 
&\lesssim \eps  \|  \widehat{\Gamma}(g(t), g(t)) (\xi) \|_{L^2_v}  \\
&\lesssim \eps \|  \widehat{\Gamma}(g, g) (\xi) \|_{L^\infty_t L^2_v}.
\end{aligned}
$$
and for the fourth one
$$
\begin{aligned}
\| I_4(t,\xi) \|_{L^2_v} 
&\lesssim \eps  e^{ -\beta_j t |\xi|^2}\|  \widehat{\Gamma}(g(0), g(0)) (\xi) \|_{L^2_v}  \\
&\lesssim \eps \|  \widehat{\Gamma}(g, g) (\xi) \|_{L^\infty_t L^2_v}.
\end{aligned}
$$
This yields
\begin{equation}\label{eq:I1-I3-I4}
\| I_1 \|_{L^1_\xi L^\infty_t L^2_v} 
+\| I_3 \|_{L^1_\xi L^\infty_t L^2_v} 
+\| I_4 \|_{L^1_\xi L^\infty_t L^2_v} 
\lesssim \eps \|  \widehat{\Gamma}(g, g)  \|_{L^1_\xi L^\infty_t L^2_v}
\lesssim \eps \|  \widehat g  \|_{L^1_\xi L^\infty_t L^2_v}^2,
\end{equation}
where we have used \eqref{eq:hatGamma_L1xiL2v} in last inequality.

For the second term in \eqref{eq:Psi_j0} we first write,  for all $t \ge 0$ and $\xi \in \Omega'_\xi$,
$$
\begin{aligned}
\| I_2(t,\xi) \|_{L^2_v} 
&\lesssim \eps \int_0^t  e^{ -\beta_j (t-\tau) |\xi|^2} \| \partial_\tau \widehat{\Gamma}(g(\tau), g(\tau)) (\xi) \|_{L^2_v} \, \d \tau .
\end{aligned}
$$
Since $\partial_\tau \widehat{\Gamma}(g, g) =  \widehat{\Gamma}(\partial_\tau g, g) + \widehat{\Gamma}(g, \partial_\tau g)$, from \eqref{eq:Gamma_L2v} we get
$$
\| \partial_\tau \widehat{\Gamma}(g(\tau), g(\tau)) (\xi) \|_{L^2_v} 
\lesssim \int_{\Omega'_\eta} \| \widehat g(\tau, \xi - \eta) \|_{L^2_v} \| \partial_\tau g (\tau, \eta) \|_{L^2_v} \, \d \eta.
$$
As $g$ is defined through $(u, \theta, \rho)$ which satisfies the Navier-Stokes-Fourier system \eqref{Navier-Stokes-Fourier system}, we have for all $\tau \ge 0$ and all $\eta \in \Omega'_\eta$
\[
\Vert \partial_\tau \widehat{g} (\tau,\eta) \Vert_{L^2_v} \lesssim      |\eta|^2 \| \widehat{g}(\tau,\eta) \|_{L^2_v}+ |\eta| \int_{\Omega'_\zeta}   \| \widehat{g}(\tau,\eta-\zeta) \|_{L^2_v}  \|  \widehat{g}(\tau,\zeta) \|_{L^2_v} \, \d \zeta.
\]
This implies
$$
\begin{aligned}
\| I_2(t,\xi) \|_{L^2_v} 
&\lesssim \eps \int_0^t  e^{ -\beta_j (t-\tau) |\xi|^2} \int_{\Omega'_\eta} \| \widehat g(\tau, \xi - \eta) \|_{L^2_v} |\eta|^2 \| \widehat{g}(\tau,\eta) \|_{L^2_v} \, \d \eta  \, \d \tau \\
&\quad
+ \eps \int_0^t  e^{ -\beta_j (t-\tau) |\xi|^2}\int_{\Omega'_\eta} \| \widehat g(\tau, \xi - \eta) \|_{L^2_v} |\eta| \int_{\Omega'_\zeta}   \| \widehat{g}(\tau,\eta-\zeta) \|_{L^2_v}  \|  \widehat{g}(\tau,\zeta) \|_{L^2_v} \, \d \zeta \, \d \eta \, \d \tau \\
&
=: R_1(t,\xi) + R_2(t,\xi).
\end{aligned}
$$
For the term $R_1$ we split the integral on $\eta$ into two parts: the region $2|\xi|  >|\eta|$ in which we have $|\eta|^2 \le 4|\xi|^2$; and the region  $2|\xi|  \le |\eta|$ where we have $|\eta-\xi|  \sim |\eta| $, which yields
$$
\begin{aligned}
R_1(t,\xi)
&\lesssim \eps \int_0^t  e^{ -\beta_j (t-\tau) |\xi|^2} \int_{\Omega'_\eta} \mathbf{1}_{|\eta| < 2 |\xi|} \| \widehat g(\tau, \xi - \eta) \|_{L^2_v} |\eta|^2 \| \widehat{g}(\tau,\eta) \|_{L^2_v} \, \d \eta  \, \d \tau \\
&\quad
+ \eps \int_0^t  e^{ -\beta_j (t-\tau) |\xi|^2} \int_{\Omega'_\eta} \mathbf{1}_{|\eta| \ge 2 |\xi|} \| \widehat g(\tau, \xi - \eta) \|_{L^2_v} |\eta|^2 \| \widehat{g}(\tau,\eta) \|_{L^2_v} \, \d \eta  \, \d \tau \\
&\lesssim \eps \int_0^t  |\xi|^2 e^{ -\beta_j (t-\tau) |\xi|^2} \int_{\Omega'_\eta}  \| \widehat g(\tau, \xi - \eta) \|_{L^2_v}  \| \widehat{g}(\tau,\eta) \|_{L^2_v} \, \d \eta  \, \d \tau \\
&\quad
+ \eps \int_0^t  e^{ -\beta_j (t-\tau) |\xi|^2} \int_{\Omega'_\eta} |\xi-\eta| \| \widehat g(\tau, \xi - \eta) \|_{L^2_v} |\eta| \| \widehat{g}(\tau,\eta) \|_{L^2_v} \, \d \eta  \, \d \tau .
\end{aligned}
$$
Thanks to H\"older's inequality in the time variable, it follows
$$
\begin{aligned}
\| R_1(\xi) \|_{L^\infty_t}
&\lesssim \eps  \int_{\Omega'_\eta}  \| \widehat g(\xi - \eta) \|_{L^\infty_t L^2_v}  \| \widehat{g}(\eta) \|_{L^\infty_t L^2_v} \, \d \eta   \\
&\quad
+ \eps  \int_{\Omega'_\eta} |\xi-\eta| \| \widehat g(\xi - \eta) \|_{L^2_t L^2_v} |\eta| \| \widehat{g}(\eta) \|_{L^2_t L^2_v} \, \d \eta   ,
\end{aligned}
$$
therefore taking the $L^1_\xi$ norm and using Young's convolution inequality we obtain
\begin{equation}\label{eq:R1}
\begin{aligned}
\| R_1 \|_{L^1_\xi L^\infty_t}
&\lesssim \eps  \| \widehat g \|_{L^1_\xi L^\infty_t L^2_v}^2  + \eps \| |\xi|  \widehat g \|_{L^1_\xi L^2_t L^2_v}^2.
\end{aligned}
\end{equation}

For the term $R_2$ we write
$$
\begin{aligned}
\| R_2(\xi) \|_{L^\infty_t}
&\lesssim  \eps \sup_{t \ge 0} \int_0^t  |\xi| e^{ -\beta_j (t-\tau) |\xi|^2} |\xi|^{-1}\int_{\Omega'_\eta} \| \widehat g(\tau, \xi - \eta) \|_{L^2_v} |\eta| \int_{\Omega'_\zeta}   \| \widehat{g}(\tau,\eta-\zeta) \|_{L^2_v}  \|  \widehat{g}(\tau,\zeta) \|_{L^2_v} \, \d \zeta \, \d \eta \, \d \tau \\
&\lesssim  \eps \sup_{t \ge 0}\left(\int_0^t  |\xi|^2 e^{ -\beta_j (t-\tau) |\xi|^2} \, \d \tau \right)^{1/2}  |\xi|^{-1} \left( \int_0^\infty G(\tau,\xi)^2 \, \d \tau \right)^{1/2},
\end{aligned}
$$
where we denote
$$
G(\tau,\xi) = \int_{\Omega'_\eta} \| \widehat g(\tau, \xi - \eta) \|_{L^2_v} H(\tau,\eta) \, \d \eta ,
\quad
H(\tau,\eta) = |\eta| \int_{\Omega'_\zeta}   \| \widehat{g}(\tau,\eta-\zeta) \|_{L^2_v}  \|  \widehat{g}(\tau,\zeta) \|_{L^2_v} \, \d \zeta  .
$$
By Minkowski and H\"older inequalities
$$
\begin{aligned}
\|   G(\xi) \|_{L^2_t}
&\lesssim \int_{\Omega'_\eta} \left(\int_0^\infty \| \widehat g(\tau, \xi - \eta) \|_{L^2_v}^2  |H(\tau,\eta)|^2 \, \d \tau \right)^{1/2}   \d \eta \\
&\lesssim \int_{\Omega'_\eta} \| \widehat g( \xi - \eta) \|_{L^\infty_t L^2_v} \|H(\eta) \|_{L^2_t}  \, \d \eta.
\end{aligned}
$$
Moreover
$$
H(\tau,\eta) \lesssim  \int_{\Omega'_\zeta}  |\eta- \zeta| \| \widehat{g}(\tau,\eta-\zeta) \|_{L^2_v}  \|  \widehat{g}(\tau,\zeta) \|_{L^2_v} \, \d \zeta  
+ \int_{\Omega'_\zeta} \| \widehat{g}(\tau,\eta-\zeta) \|_{L^2_v}  |\zeta|\|  \widehat{g}(\tau,\zeta) \|_{L^2_v} \, \d \zeta  .
$$
Thus again by Minkowski and H\"older inequalities,
$$
\begin{aligned}
\|  H(\eta) \|_{L^2_t}
&\lesssim \int_{\Omega'_\zeta} \left(\int_0^\infty \| |\eta-\zeta| \widehat g(\tau,\eta-\zeta) \|_{L^2_v}^2  \| \widehat g(\tau,\zeta) \|_{L^2_v}^2 \, \d \tau \right)^{1/2}   \d \zeta \\
&\quad
+ \int_{\Omega'_\zeta} \left(\int_0^\infty \| \widehat g(\tau,\eta-\zeta) \|_{L^2_v}^2  \| |\zeta| \widehat g(\tau,\zeta) \|_{L^2_v}^2 \, \d \tau \right)^{1/2}   \d \zeta \\
&\lesssim
 \int_{\Omega'_\zeta} \| \widehat g(\eta-\zeta) \|_{L^\infty_t L^2_v}  \| |\zeta| \widehat g(s,\zeta) \|_{L^2_t L^2_v} \,   \d \zeta .
\end{aligned}
$$
Hence we get
$$
\begin{aligned}
\| R_2(\xi) \|_{L^\infty_t}
&\lesssim  \eps  |\xi|^{-1} \int_{\Omega'_\eta} \int_{\Omega'_\zeta} \| \widehat g( \xi - \eta) \|_{L^\infty_t L^2_v}  \| \widehat g(\eta-\zeta) \|_{L^\infty_t L^2_v}  \| |\zeta| \widehat g(\zeta) \|_{L^2_t L^2_v} \,   \d \zeta \, \d \eta .
\end{aligned}
$$
Taking the $L^1_\xi$ norm and distinguishing between high and low frequencies yields
\begin{equation}\label{eq:R2}
\| \mathbf{1}_{|\xi| \ge 1} R_2 \|_{L^1_\xi L^\infty_t}
\lesssim \eps \| \widehat g \|_{L^1_\xi L^\infty_t L^2_v}^2 \| |\xi| \widehat g \|_{L^1_\xi L^2_t L^2_v},
\end{equation}
and, in the whole space case $\Omega_x = \R^3$ and $\Omega'_\xi=\R^3$,
\begin{equation}\label{eq:R2bis}
\begin{aligned}
\| \mathbf{1}_{|\xi| < 1} R_2 \|_{L^1_\xi L^\infty_t}
&\lesssim \eps  \| \mathbf{1}_{|\xi| < 1} |\xi|^{-1} \|_{L^{p'}_\xi} \left\| \int_{\Omega'_\eta} \int_{\Omega'_\zeta} \| \widehat g( \xi - \eta) \|_{L^\infty_t L^2_v}  \| \widehat g(\eta-\zeta) \|_{L^\infty_t L^2_v}  \| |\zeta| \widehat g(\zeta) \|_{L^2_t L^2_v} \,   \d \zeta \, \d \eta \right\|_{L^p_\xi} \\
&\lesssim \eps   \| \widehat g \|_{L^p_\xi L^\infty_t L^2_v}  \| \widehat g \|_{L^1_\xi L^\infty_t L^2_v}  \| |\xi| \widehat g \|_{L^1_\xi L^2_t L^2_v},
\end{aligned}
\end{equation}
where we have used that $\mathbf{1}_{|\xi| < 1} |\xi|^{-1} \in L^{p'}_\xi$ since $p>3/2$.

\medskip\noindent
\textit{Step 3.}
It only remains to compute the term $\widehat{\Psi}^{\eps \#}$, for which we first write, for all $t \ge 0$ and $\xi \in \Omega'_\xi$, 
$$
\| \widehat{\Psi}^{\eps \#}[g,g] (t,\xi) \|_{L^2_v}
\lesssim \frac{1}{\eps} \int_0^t \| \widehat{U}^{\eps \#} (t-\tau, \xi )\widehat{\Gamma}(g(\tau), g(\tau))(\xi) \|_{L^2_v} \, \d \tau.
$$
In the hard potentials case $\gamma+2s \ge 0$, thanks to \eqref{U hard estimate} we have, for all $t \ge 0$ and $\xi \in \Omega'_\xi$,
$$
\begin{aligned}
\| \widehat{\Psi}^{\eps \#}[g,g] (t,\xi) \|_{L^2_v}
&\lesssim \frac{1}{\eps} \int_0^t e^{-\lambda_1 \frac{(t-\tau)}{\eps^2}} \| \widehat{\Gamma}(g(\tau), g(\tau))(\xi) \|_{L^2_v} \, \d \tau \\
&\lesssim \frac{1}{\eps} \| \widehat{\Gamma}(g, g)(\xi) \|_{L^\infty_t L^2_v} \int_0^t e^{-\lambda_1 \frac{(t-\tau)}{\eps^2}}  \, \d \tau \\
&\lesssim \eps \| \widehat{\Gamma}(g, g)(\xi) \|_{L^\infty_t L^2_v}.
\end{aligned}
$$
For the soft potentials case $\gamma + 2s < 0$, observing that $\mathbf{P} \widehat \Gamma (g,g) = 0$ we fix $\ell>0$ such that $ \frac{\ell}{|\gamma+2s|} >1$ then we use \eqref{U soft estimate} to obtain, for all $t \ge 0$ and $\xi \in \Omega'_\xi$,
$$
\begin{aligned}
\| \widehat{\Psi}^{\eps \#}[g,g] (t,\xi) \|_{L^2_v}
&\lesssim \frac{1}{\eps} \int_0^t \left( 1+ \frac{(t-\tau)}{\eps^2}\right)^{- \frac{\ell}{|\gamma+2s|}} \| \widehat{\Gamma}(g(\tau), g(\tau))(\xi) \|_{L^2_v (\la v \ra^\ell)} \, \d \tau \\
&\lesssim \frac{1}{\eps} \| \widehat{\Gamma}(g, g)(\xi) \|_{L^\infty_t L^2_v (\la v \ra^\ell)} \int_0^t \left( 1+ \frac{(t-\tau)}{\eps^2}\right)^{- \frac{\ell}{|\gamma+2s|}}   \, \d \tau \\
&\lesssim \eps \| \widehat{\Gamma}(g, g)(\xi) \|_{L^\infty_t L^2_v (\la v \ra^\ell)}.
\end{aligned}
$$

Taking the $L^1_\xi L^\infty_t$ norm in above estimates and using \eqref{eq:hatGamma_L1xiL2v} yields, for both hard potentials and soft potentials cases,
\begin{equation}\label{eq:Psieps_hard}
\| \widehat{\Psi}^{\eps \#}[g,g]  \|_{L^1_\xi L^\infty_t L^2_v}
\lesssim \eps \| \widehat g \|_{L^1_\xi L^\infty_t L^2_v}^2.
\end{equation}

\medskip\noindent
\textit{Step 4: Conclusion.} We conclude the proof by gathering estimates \eqref{eq:Psij0-Psij1-Psij2}, \eqref{eq:I1-I3-I4}, \eqref{eq:R1}, \eqref{eq:R2}, \eqref{eq:R2bis}, and \eqref{eq:Psieps_hard} together with the bounds for $g$ from Theorem~\ref{theo:NSF}.
\end{proof}

\subsection{Proof of Theorem~\ref{theo:hydro_limit}}

Let $f^\eps$, for any $\eps \in (0,1]$, be the unique global mild solution to~\eqref{eq:feps_intro} associated to the initial data $f^\eps_0$ constructed in Theorem~\ref{theo:boltzmann}.

Let $g = \P g$ be the kinetic distribution defined by \eqref{eq:g(t)} through the unique global mild solution $(\rho,u,\theta)$ to \eqref{Navier-Stokes-Fourier system} associated to the initial data $(\rho_0,u_0,\theta_0)$ constructed in Theorem~\ref{theo:NSF}, and denote also $g_0 = \P g_0$ the initial kinetic distribution defined by \eqref{eq:g_0} through the initial data $(\rho_0,u_0,\theta_0)$.

We now from \cite{BU,GT} for instance, that $g$ verifies the equation
\begin{equation}
g(t) = U(t) g_0 + \Psi[g,g](t),
\end{equation}
where we recall that $U(t)$ is defined in \eqref{eq:def:U(t)}, and $\Psi(t)$ in \eqref{eq:def:Psi(t)}. Taking the Fourier transform in $x \in \Omega_x$, we then have
\begin{equation}
\widehat g(t,\xi) = \widehat U(t,\xi) \widehat g_0(\xi) + \widehat \Psi[g,g] (t,\xi) .
\end{equation}
for all $\xi \in \Omega'_\xi$, and where we recall that $\widehat U$ is defined in \eqref{eq:def:hatUxi}, and $\widehat \Psi$ in \eqref{eq:def:hatPsixi}.

We first observe that the difference $f^\eps - g$ satisfies
\begin{equation}\label{eq:feps-g}
\begin{aligned}
\widehat f^\eps (\xi) - \widehat g(\xi) 
&= \widehat U^\eps(t,\xi) \widehat f^\eps_0(\xi) - \widehat U(t,\xi) \widehat g_0(\xi) + \widehat \Psi^\eps [f^\eps,f^\eps](t,\xi)
- \widehat \Psi [g,g](t,\xi) \\
&= \widehat U^\eps(t,\xi) \left\{ \widehat f^\eps_0(\xi) - \widehat g_0(\xi) \right\}
+ \left\{\widehat U^\eps(t,\xi)-\widehat U(t,\xi)  \right\} \widehat g_0(\xi)  \\
&\quad
+ \left\{ \widehat \Psi^\eps [g,g](t,\xi)
- \widehat \Psi [g,g](t,\xi) \right\}
+ \left\{  \widehat \Psi^\eps [f^\eps,f^\eps](t,\xi)
-\widehat \Psi^\eps [g,g](t,\xi)  \right\} \\
&=: T_1 + T_2 + T_3 + T_4,
\end{aligned}
\end{equation}
and we estimate each one of these terms separately.

For the first term, from Lemma~\ref{basic property U epsilon} we have
$$
\| \widehat U^\eps(\cdot) \{ \widehat f^\eps_0 - \widehat g_0 \} \|_{L^1_\xi L^\infty_t L^2_v}
\lesssim \|\widehat f^\eps_0 - \widehat g_0 \|_{L^1_\xi L^2_v}.
$$
Thanks to Lemma~\ref{convergence U epsilon U} and an interpolation argument, we obtain for the second term, for any $\delta \in [0,1]$,
$$
\|  \{\widehat U^\eps(\cdot)-\widehat U(\cdot)  \} \widehat g_0\|_{L^1_\xi L^\infty_t L^2_v} \lesssim  \eps^\delta \|\la \xi \ra^\delta \widehat g_0 \|_{L^1_\xi L^2_v}.
$$
For the third term we use Lemma~\ref{convergence Psi epsilon Psi}, which yields
$$
\|  \widehat \Psi^\eps [g,g] - \widehat \Psi [g,g] \|_{L^1_\xi L^\infty_t L^2_v} \lesssim \eps \left( \| \widehat g_0 \|_{L^1_\xi L^2_v}^2 + \| \widehat g_0 \|_{L^1_\xi L^2_v}^3 \right),
$$
in the case $\Omega_x=\T^3$, and
$$
\|  \widehat \Psi^\eps [g,g] - \widehat \Psi [g,g] \|_{L^1_\xi L^\infty_t L^2_v} \lesssim \eps \left( \| \widehat g_0 \|_{L^1_\xi L^2_v}^2 + \| \widehat g_0 \|_{L^1_\xi L^2_v}^3 + \| \widehat g_0 \|_{L^p_\xi L^2_v}^2 + \| \widehat g_0 \|_{L^p_\xi L^2_v}^3 \right),
$$
in the case $\Omega_x=\R^3$.

For the fourth term $T_4$, we first decompose $f^\eps = \P^\perp f^\eps + \P f^\eps$ and use that $g = \P g$ to write
$$
\begin{aligned}
T_4 &=\widehat \Psi^\eps [f^\eps,f^\eps](t,\xi) -\widehat \Psi^\eps [g,g](t,\xi) \\
&= \widehat \Psi^\eps [\P^\perp f^\eps, \P^\perp f^\eps](t,\xi) 
+ 2\widehat \Psi^\eps [\P f^\eps, \P^\perp f^\eps](t,\xi)  \\
&\quad
+\widehat \Psi^\eps [\P f^\eps, \P (f^\eps-g)](t,\xi) + \widehat \Psi^\eps [\P g, \P (f^\eps-g)](t,\xi) .
\end{aligned}
$$
Thanks to Proposition~\ref{prop:estimate_Ueps_regularization} and Lemma~\ref{lem:nonlinear} we have
$$
\begin{aligned}
\| \widehat \Psi^\eps [\P^\perp f^\eps, \P^\perp f^\eps] \|_{L^1_\xi L^\infty_t L^2_v}
& \lesssim  \| \widehat \Gamma [\P^\perp f^\eps, \P^\perp f^\eps] \|_{L^1_\xi L^2_t (H^{s,*}_v)'} \\
&\lesssim \| \P^\perp \widehat f^\eps \|_{L^1_\xi L^\infty_t L^2_v}
\| \P^\perp \widehat f^\eps \|_{L^1_\xi L^2_t H^{s,*}_v},
\end{aligned}
$$
moreover
$$
\begin{aligned}
\| \widehat \Psi^\eps [\P f^\eps, \P^\perp f^\eps] \|_{L^1_\xi L^\infty_t L^2_v}
& \lesssim  \| \widehat \Gamma [\P f^\eps, \P^\perp f^\eps] \|_{L^1_\xi L^2_t (H^{s,*}_v)'} + \| \widehat \Gamma [\P^\perp f^\eps, \P f^\eps] \|_{L^1_\xi L^2_t (H^{s,*}_v)'} \\
&\lesssim \| \P \widehat f^\eps \|_{L^1_\xi L^\infty_t L^2_v}
\| \P^\perp \widehat f^\eps \|_{L^1_\xi L^2_t H^{s,*}_v},
\end{aligned}
$$
where we have used that $\| \P \phi \|_{H^{s,*}_v} \lesssim \| \P \phi \|_{L^2_v}$ and $\| \la v \ra^{-(\gamma/2+s)_{-}}  \phi \|_{L^2_v} \lesssim \min\{ \| \phi \|_{L^2_v} , \| \phi \|_{H^{s,*}_v} \}$.
This implies
\begin{equation}\label{eq:I1_I2_I3}
\begin{aligned}
\| \widehat \Psi^\eps [\P^\perp f^\eps, \P^\perp f^\eps] \|_{L^1_\xi L^\infty_t L^2_v} 
&+ 2\| \widehat \Psi^\eps [\P f^\eps, \P^\perp f^\eps] \|_{L^1_\xi L^\infty_t L^2_v}  \\
&\lesssim \| \widehat f^\eps \|_{L^1_\xi L^\infty_t L^2_v}
\| \P^\perp \widehat f^\eps \|_{L^1_\xi L^2_t H^{s,*}_v}.
\end{aligned}
\end{equation}
Therefore, using the bounds of Theorem~\ref{theo:boltzmann}, we deduce from \eqref{eq:I1_I2_I3} that
$$
\begin{aligned}
&\| \widehat \Psi^\eps [\P^\perp f^\eps, \P^\perp f^\eps] \|_{L^1_\xi L^\infty_t L^2_v} 
+ 2\| \widehat \Psi^\eps [\P f^\eps, \P^\perp f^\eps] \|_{L^1_\xi L^\infty_t L^2_v} 
\lesssim \eps \| \widehat f^\eps_0 \|_{L^1_\xi L^2_v}^2,
\end{aligned}
$$
in the case $\Omega_x=\T^3$, and
$$
\begin{aligned}
&\| \widehat \Psi^\eps [\P^\perp f^\eps, \P^\perp f^\eps] \|_{L^1_\xi L^\infty_t L^2_v} + 2\| \widehat \Psi^\eps [\P f^\eps, \P^\perp f^\eps] \|_{L^1_\xi L^\infty_t L^2_v} 
\lesssim \eps \left( \| \widehat f^\eps_0 \|_{L^1_\xi L^2_v}^2 + \| \widehat f^\eps_0 \|_{L^p_\xi L^2_v}^2\right),
\end{aligned}
$$
in the case $\Omega_x=\R^3$.

Furthermore, from Proposition~\ref{prop:estimate_Ueps_regularization} and Lemma~\ref{lem:nonlinear}, and also using that $\| \P \phi \|_{H^{s,*}_v} \lesssim \| \P \phi \|_{L^2_v}$, we have
$$
\begin{aligned}
&\| \widehat \Psi^\eps [\P f^\eps , \P (f^\eps-g)] \|_{L^1_\xi L^\infty_t L^2_v}\\
&\qquad\lesssim \| \widehat \Gamma ( \P (f^\eps-g),\P f^\eps ) \|_{L^1_\xi L^2_t (H^{s,*}_v)'} + \| \widehat \Gamma ( \P f^\eps, \P (f^\eps-g) ) \|_{L^1_\xi L^2_t (H^{s,*}_v)'}\\
&\qquad\lesssim \| \P (\widehat f^\eps - \widehat g) \|_{L^1_\xi L^\infty_t L^2_v} \| \P \widehat f^\eps \|_{L^1_\xi L^2_t L^2_v},
\end{aligned}
$$
and similarly
$$
\begin{aligned}
&\|  \widehat \Psi^\eps [\P g, \P (f^\eps-g)] \|_{L^1_\xi L^\infty_t L^2_v}\\
&\qquad\lesssim \| \widehat \Gamma ( \P g, \P (f^\eps-g) ) \|_{L^1_\xi L^2_t (H^{s,*}_v)'} + \| \widehat \Gamma (  \P (f^\eps-g), \P g ) \|_{L^1_\xi L^2_t (H^{s,*}_v)'} \\
&\qquad\lesssim \| \P (\widehat f^\eps - \widehat g) \|_{L^1_\xi L^\infty_t L^2_v} \| \P \widehat g \|_{L^1_\xi L^2_t L^2_v}.
\end{aligned}
$$
In the case of the torus $\Omega_x = \T^3$, we can use the bounds of Theorem~\ref{theo:boltzmann}--(1) and Theorem~\ref{theo:NSF}--(1) to obtain
$$
\begin{aligned}
&\| \widehat \Psi^\eps [\P f^\eps , \P (f^\eps-g)] \|_{L^1_\xi L^\infty_t L^2_v}
+ \|  \widehat \Psi^\eps [\P g, \P (f^\eps-g)] \|_{L^1_\xi L^\infty_t L^2_v} \\
&\qquad 
\lesssim \left(  \| \widehat f^\eps_0 \|_{L^1_\xi L^2_v} + \| \widehat g_0 \|_{L^1_\xi L^2_v} \right) \| \widehat f^\eps - \widehat g \|_{L^1_\xi L^\infty_t L^2_v} \\
&\qquad 
\lesssim \eta_2 \| \widehat f^\eps - \widehat g \|_{L^1_\xi L^\infty_t L^2_v} .
\end{aligned}
$$
In the case of the whole space $\Omega_x = \R^3$, we first use \eqref{eq:fPf_Hs*v} to write
$$
\| \P f^\eps \|_{L^1_\xi L^2_t L^2_v} \lesssim 
\left\| \frac{|\xi|}{\la \xi \ra} \P f^\eps \right\|_{L^1_\xi L^2_t L^2_v}
+ \left\| \frac{|\xi|}{\la \xi \ra} \P f^\eps \right\|_{L^p_\xi L^2_t L^2_v},
$$
then we use the bounds of Theorem~\ref{theo:boltzmann}--(2) and Theorem~\ref{theo:NSF}--(2) to get
$$
\begin{aligned}
&\| \widehat \Psi^\eps [\P (f^\eps-g), \P f^\eps] \|_{L^1_\xi L^\infty_t L^2_v}
+ \|  \widehat \Psi^\eps [\P g, \P (f^\eps-g)] \|_{L^1_\xi L^\infty_t L^2_v} \\
&\qquad 
\lesssim \left(  \| \widehat f^\eps_0 \|_{L^1_\xi L^2_v}  + \| \widehat f^\eps_0 \|_{L^p_\xi L^2_v} + \| \widehat g_0 \|_{L^1_\xi L^2_v} + \| \widehat g_0 \|_{L^p_\xi L^2_v}  \right) \| \widehat f^\eps - \widehat g \|_{L^1_\xi L^\infty_t L^2_v} \\
&\qquad 
\lesssim \eta_2 \| \widehat f^\eps - \widehat g \|_{L^1_\xi L^\infty_t L^2_v} .
\end{aligned}
$$

Gathering previous estimates and using that $\eta_2>0$ is small enough, so that when taking the $L^1_\xi L^\infty_t L^2_v$ norm of~\eqref{eq:feps-g} the  fourth and fifth terms in the right-hand side of~\eqref{eq:feps-g} can be absorbed by the left-hand side, we deduce
\begin{equation}\label{final hydrodynamical result1}
\begin{aligned}
\| \widehat f^\eps - \widehat g \|_{L^1_\xi L^\infty_t L^2_v}
&\lesssim  \|\widehat f^\eps_0 - \widehat g_0 \|_{L^1_\xi L^2_v}
+ \eps^\delta \|\la \xi \ra^\delta \widehat g_0 \|_{L^1_\xi L^2_v} \\
&\quad
+ \eps \left(  \| \widehat g_0 \|_{L^1_\xi L^2_v}^2 + \| \widehat g_0 \|_{L^1_\xi L^2_v}^3 \right)
+\eps \| \widehat f^\eps_0 \|_{L^1_\xi L^2_v}^2
\end{aligned}
\end{equation}
in the case $\Omega_x=\T^3$, and
\begin{equation}\label{final hydrodynamical result2}
\begin{aligned}
\| \widehat f^\eps - \widehat g \|_{L^1_\xi L^\infty_t L^2_v}
&\lesssim  \|\widehat f^\eps_0 - \widehat g_0 \|_{L^1_\xi L^2_v}
+ \eps^\delta \|\la \xi \ra^\delta \widehat g_0 \|_{L^1_\xi L^2_v} \\
&\quad
+ \eps \left(  \| \widehat g_0 \|_{L^1_\xi L^2_v}^2 + \| \widehat g_0 \|_{L^1_\xi L^2_v}^3 +  \| \widehat g_0 \|_{L^p_\xi L^2_v}^2 + \| \widehat g_0 \|_{L^p_\xi L^2_v}^3 \right) \\
&\quad
+\eps \left( \| \widehat f^\eps_0 \|_{L^1_\xi L^2_v}^2 + \| \widehat f^\eps_0 \|_{L^p_\xi L^2_v}^2 \right)
\end{aligned}
\end{equation}
in the case $\Omega_x=\R^3$.
From these estimates, we first conclude that
$$
\lim_{\eps \to 0} \| \widehat f^\eps - \widehat g \|_{L^1_\xi L^\infty_t L^2_v} = 0,
$$
assuming moreover that $\la \xi \ra^\delta \widehat g_0 \in L^1_\xi L^2_v$ for some $\delta \in (0,1]$. We can finally prove Theorem~\ref{theo:hydro_limit}, where we only assume $\widehat g_0 \in L^1_\xi L^2_v$, by using the previous convergence and arguing by density as in \cite{CRT}.  This completes the proof of Theorem~\ref{theo:hydro_limit}.

\bigskip

\end{document}